\newcommand{\dom}{\mathrm{dom}}
\newcommand{\bc}{\begin{center}}
\newcommand{\ec}{\end{center}}
\newcommand{\restr}{\mathord{\upharpoonright}}
\DeclareMathOperator{\supp}{supp}
\newcommand{\F}{\mathcal{F}}
\newcommand{\RR}{\mathbb{R}}
\newcommand{\forces}[2]{\Vdash_{#2}``#1"}
\newtheorem{thm}{Theorem}[section]
\newtheorem*{thm*}{Theorem}
\newtheorem{lem}[thm]{Lemma}
\newtheorem{prop}[thm]{Proposition}
\newtheorem{cor}[thm]{Corollary}
\newtheorem{fact}[thm]{Fact}
\theoremstyle{definition}
\newtheorem{prob}[thm]{Problem}
\newtheorem{df}[thm]{Definition}
\newtheorem{exa}[thm]{Example}
\newtheorem{rem}[thm]{Remark}
\newtheorem*{mainthm1}{Theorem \ref{thm:RBMinNotQ}}
\newtheorem*{mainthm2}{Theorem \ref{thm:RBMinDichotomy}}
\title[On the Rudin-Blass ordering of measures]{On the Rudin-Blass ordering of measures}
\author{Piotr Borodulin--Nadzieja}
\address[Piotr Borodulin-Nadzieja]{Mathematical Institute, University of Wroc\l aw \\   pl. Grunwaldzki 2, 50-384 Wroc\l aw, Poland}
\email{pborod@math.uni.wroc.pl}
\author{Arturo Mart\'{i}nez-Celis}
\address[Arturo Martinez-Celis]{Mathematical Institute, University of Wroc\l aw \\   pl. Grunwaldzki 2, 50-384 Wroc\l aw, Poland}
\email{arturo.martinez-celis@math.uni.wroc.pl}
\author{Adam Morawski}
\address[Adam Morawski]{Faculty of Mathematics and Physics, Charles University \\ Ke Karlovu 3, 121 16 Praha 2, Czech Republic}
\address{Institute of Mathematics, Czech Academy of Sciences \\ \ \v{Z}itn\'{a} 25, 110 00 Praha 1, Czech Republic}
\email{morawski@math.cas.cz}
\author{Jadwiga \'Swierczy\'nska}
\address[Jadwiga \'Swierczy\'nska]{Institute of Computer Science, University of Wroc\l aw \\   pl. Grunwaldzki 2, 50-384 Wroc\l aw, Poland}
\email{330498@uwr.edu.pl}
\thanks{The third author was was supported by the 
The University Research Centers of Charles University (UNCE/24/SCI/022),
The Czech Academy of Sciences CAS (RVO 67985840),
Charles University Grant Agency (GAUK project no. 219025)}
\subjclass[2020]{Primary: 03E05. Secondary: 03E35, 03E75, 28E15, 28A33, 46E27.}
\keywords{P-points, P-measures, Additive Property, Q-points, Q-measures, selective ultrafilters, Ruding-Blass ordering, Rudin-Keisler ordering, medial limit}
\begin{document}

\begin{abstract} 
	We study the Rudin-Blass (and the Rudin-Keisler) ordering on the finite additive measures on $\omega$. We propose a generalization of the notion of Q-point and selective ultrafilter to measures: Q-measures and selective measures. We show some symmetries	between Q-points and Q-measures but also we show where those symmetries break up. In particular we present an example of a measure which is minimal in the sense of Rudin-Blass but which is not a Q-measure.
\end{abstract}

\maketitle

\section*{Introduction}

By a measure $\mu$ on $\omega$ we mean a finitely additive set function of values in $[0,1]$ and such that $\mu(\omega)=1$. Every ultrafilter $\mathcal{U}$ on the natural numbers can be seen as an example of a measure, by assigning $1$ to elements of $\mathcal{U}$, and $0$ to the elements of the dual ideal. Some measures, the non-atomic ones, are far from being ultrafilters. Perhaps the most natural example of such measure is the following:
\[ \mu(A) = \lim_{n\to \mathcal{U}} \frac{|A \cap n|}{n}, \]
where $\mathcal{U}$ is an ultrafilter on $\omega$. 

Presumably, at the beginning of the history of ultrafilters, the non-atomic measures on $\omega$, particularly the invariant ones, were studied as intensively as the atomic ones (i.e. the ultrafilters), see e.g. \cite{Chou}, \cite{Fairchild}, \cite{Choquet}. 
At some point, the theory of ultrafilters flourished and expanded in many different directions, whereas the non-atomic measures were studied with a relatively modest intensity. These were investigated by Fremlin and Talagrand in \cite{FremlinTalagrand}, van Douwen in \cite{Douwen}, and other people (see more references below). 

The structure of ultrafilters on $\omega$ can be investigated in different contexts. One of the most fruitful approaches is through Rudin-Keisler and Rudin-Blass orderings (see \cite{Laflamme} and \cite{Blass73}). Loosely speaking, an ultrafilter $\mathcal{U}$ is smaller (in
Rudin-Blass or Rudin-Keisler order) than $\mathcal{V}$ if $\mathcal{U}$ contains no more information than $\mathcal{V}$. 

The Rudin-Blass and Rudin-Keisler orderings can be easily generalized for measures, and it seems that this generalization was only considered for the first time very recently, in \cite{cancino}, where it was used to study measures in the Silver extension. The authors
proved that, consistently, there is a measure that does not have an ultrafilter Rudin-Keisler below it. This suggests that the order structure of measures does not necessarily come down to the ultrafilter case.

The most natural question concerning the RB and RK orderings is the one concerning their minimal elements.  
For ultrafilters, the Rudin-Blass minimal ultrafilters are exactly the Q-points and the Rudin-Keisler ones are exactly the selective ultrafilters.
An ultrafilter $\mathcal{U}$ is a \emph{Q-point} if every partition of $\omega$ into finite sets has a selector in $\mathcal{U}$. An ultrafilter $\mathcal{U}$ is \emph{selective} if every partition into sets from the dual ideal has a selector in $\mathcal{U}$.
These notions have natural
generalizations to the realm of measures that, to the best of our
knowledge, have not been studied yet (with a very recent exception of \cite{Luis} which we will mention later). 

We say that a measure $\mu$ on $\omega$ is a \emph{Q-measure} if every partition of $\omega$ into finite sets has a selector of full measure. A measure $\mu$ is \emph{selective} if for every partition $\langle A_n\rangle_{n\in\omega}$ of $\omega$ there is a selector $S$ of the biggest
possible measure (i.e. $\mu(S) = 1 - \sum_n \mu(A_n)$).

We managed to prove that there are strong symmetries between the theory of Q-measures, selective measures and P-measures (generalizations of P-points, see below). In particular, we proved the following results which are generalizations of the
fact well known for ultrafilters:

\begin{itemize}
	\item under CH (in fact, under $\mathrm{cov}(\mathcal{M}) = \mathfrak{c}$) there are non-atomic selective measures (Theorem \ref{covM=c}), 
	\item a measure is selective if and only if it is simultaneously a P-measure and a Q-measure (Proposition \ref{lem:selective_p_q}),
	\item every Q-measure (selective measure) is minimal in Rudin-Blass (Rudin-Keisler, respectively) ordering (Theorem \ref{thm:q_measures_minimal_rb}),
	\item the generic forcing for adding a non-atomic measure adds a selective measure (Proposition \ref{prop:generic_selective}).
\end{itemize}

There are also substantial differences. The main difference we found is that the property of being Q-measure is not equivalent to minimality in the Rudin-Blass ordering. 

\begin{mainthm1} Assuming CH (or $\mathfrak{p}=\mathfrak{c}$), there is a Rudin-Blass minimal measure which is not a Q-measure (and there is an Rudin-Keisler minimal measure which is not selective).
\end{mainthm1} 

This measure has interesting properties. It is far from being a Q-measure: it is shift invariant and Lebesgue measurable (as a function from the Cantor set to the reals). Q-measures are Rudin-Blass minimal because its filters of measure 1 sets are
sufficiently close to Q-points. The reason for the minimality of the above measure is more delicate and follows from its 'fractal' structure. 

We also found an interesting dichotomy concerning a weakening of a Q-measure: A measure is a \emph{Q$^+$-measure} if for each partition into finite sets there is a selector of positive measure (this is a natural counterpart of Q$^+$ filters). We found the following result:

\begin{mainthm2} A Rudin-Blass minimal measure is either a Q$^+$-measure or it is Rudin-Blass equivalent to a shift invariant measure.
\end{mainthm2}

So, a measure which is Rudin-Blass minimal but is not Q$^+$ has to be (equivalent to a measure) symmetric with respect to the natural semigroup operation. Moreover, if such a measure is additionally a P-measure (see below), then it can be decomposed
into a Q$^+$-part and a shift invariant part (Corollary \ref{decomposition}).

The Q-points and selective ultrafilters are often put in one line with another type of ultrafilter: a P-point (e.g. selective ultrafilters are exactly those which are Q-points and P-points simultaneously). This notion has been generalized to the measure context in 1984, by Mekler (see \cite{mekler}). 
We call this generalization a P-measure (originally it was called measures with AP property). It turned out to be an interesting notion which attracted attention of several mathematicians (see \cite{blassple}, \cite{grebik}, \cite{Kunisada}, \cite{PbnDamian}).

It is easy to see that the existence of a P-point implies the existence of a non-atomic P-measure. It was a longstanding open problem (until recently) if the existence of P-measures implies the existence of P-points. It turns out that the answer is negative. Interestingly, the situation for Q-measures is quite the opposite. We have summarized this duality in the following table.
\bigskip

\begin{center}
	\begin{tabular}{ | c | c | c | }
		\hline
		Type & $\exists$ ult. $\implies$ $\exists$ non-atomic measure &  $\exists$ measure $\implies$ $\exists$ ult. \\
		\hline
		P & yes (\cite{mekler}) & no (\cite{cancino}) \\
		\hline
		Q & no (Corollary \ref{heike}) & yes (Proposition \ref{q-m-q-p}) \\
		\hline
		selective & no (Corollary \ref{heike}) & no (Theorem \ref{s-m-n-u})  \\
		\hline
	\end{tabular}
\end{center}
\bigskip
	
While we were working on this article we were informed by Antonio Avil\'es about the research in \cite{Luis}, in which the authors also defined Q-measures (although they were interested more in Q$^+$-measures). Interestingly, the authors of 
\cite{Luis} were motivated by a natural problem in Banach space theory. We overview some of their results in Section \ref{sec:Q}. In Section \ref{sec:Q+} we show that the existence of measures satisfying several conditions weaker than the Q property
considered  in \cite{Luis} is equivalent, answering \cite[Question 3]{Luis}.

In Section \ref{sec:preliminaries} we introduce basic definitions and notation. In Section \ref{Schemes} we present some ways of constructing measures on $\omega$ which will be used later on in the article. In Section \ref{sec:pqs} we define
Rudin-Blass and Rudin-Keisler orderings on measures. In Section \ref{sec:qmeasure} we introduce the special types of measures: P-measures, Q-measures, and selective measures, we discuss their basic properties and their existence. Section \ref{sec:minimality} is devoted mostly to
the construction of a RB-minimal measure which is shift invariant.

\section{Preliminaries}\label{sec:preliminaries}

By a \emph{measure} on $\omega$ we always mean a finitely additive probability measure. We will also assume that it vanishes on points, i.e. $\mu(F)=0$ for every finite $F\subseteq \omega$. We will call such measures \emph{free} but we will often 
assume this property without mentioning it. If $\mathcal{U}$ is
an ultrafilter, then the function $\delta_\mathcal{U}$ assigning $1$ to elements of $\mathcal{U}$ and $0$ to the rest, is a measure. Sometimes we will just treat ultrafilters as measures. Notice that an ultrafilter is free if and only if it is free as a measure.

If $A, B \subseteq \omega$, then $A \subseteq^* B$ denotes that $|A \setminus B| < \omega$. Usually we will not distinguish sets which are equal modulo finite sets (and so, formally, we will work in $\mathcal{P}(\omega)/fin$). A \emph{pseudo-intersection} of a family $\mathcal{A} \subseteq \mathcal{P}(\omega)$ is a set $X\subseteq \omega$ such that $X\subseteq^* A$ for each $A\in \mathcal{A}$.

Every finitely additive measure on a Boolean algebra $\mathbb{A}$ can be uniquely extended to a $\sigma$-additive measure on the Stone space of $\mathbb{A}$, see \cite[Corollary 2.8]{Plebanek2024}. Also, every $\sigma$-additive measure defined on a compact zero-dimensional space $X$
induces uniquely a finitely additive measure on the Boolean algebra of clopen subsets of $X$. In particular, free measures on $\omega$ can be extended to $\sigma$-additive measures on $\beta\omega\setminus \omega$ and every
measure on $\beta\omega\setminus \omega$ induces a free measure on $\omega$.

A measure $\mu$ on a Boolean algebra $\mathbb{A}$ is \emph{non-atomic} if for every $\varepsilon>0$ there is a partition $\langle A_n\rangle_{n<N}$ of the unity such that $\mu(A_n)<\varepsilon$ for every $n<N$. Notice that a measure $\mu$ is non-atomic if and only if
its extension to $\sigma$-additive measure on the Stone space vanishes on points.

A measure $\mu$ on $\omega$ is \emph{shift invariant} if $\mu(A+1) = \mu(A)$ for each $A\subseteq \omega$, where $A + 1 = \{n+1\colon n\in A\}$.

By $\mathbb{B}$ we will denote the measure algebra $\mathbb{B} = \mathrm{Bor}(2^\omega)/_{\lambda = 0}$, where $\lambda$ is the Lebesgue measure on $2^\omega$. The measure $\lambda$ can be naturally transported to $\mathbb{B}$, and the resulting measure is also denoted by $\lambda$.

One can consider bounded signed measures on $\omega$ (i.e. finitely additive measures $\mu\colon \mathcal{P}(\omega) \to \mathbb{R}$). It is possible to integrate elements of $\ell^\infty$ with respect to such measures, and there is even a general
theory of integration with respect to finitely additive vector measures on Boolean algebras (see e.g. \cite[Chapter 29]{Schechter}). In the case of measures on $\omega$, one can define the integration rather straightforward by following the classical
Lebesgue construction. Notice that the integral in this context is a functional extending the limit, i.e. if $\lim_{n\rightarrow \infty} f(n) = r$, then $\int f \ d\mu = r$. 

Let $\mathcal{M}$ be the set of all bounded signed measures on $\omega$.
This set can be endowed with the norm $\|\mu\|=\sup\{\mu(A)-\mu(\omega\setminus A):\, A\subseteq \omega\}$, forming a real Banach space. It can be shown that $\mathcal{M} \cong (\ell_\infty)^*$. More precisely, the following transformations are
isomorphism (given $\mu\in \mathcal{M}$ and $\varphi\in (\ell_\infty)^*$):
\[\varphi_\mu(x)=\int x\,\mathrm{d}\mu, \quad\quad \mu_\varphi(A)=\varphi(\chi_A).\]
More details on this construction can be found in \cite{TheoryOfCharges}.

By $\mathcal{M}_1$ we will denote the unit ball of $\mathcal{M}$. Notice that all the probability measures, so the measures we are interested in, belong to $\mathcal{M}_1$.

The other natural topology on $\mathcal{M}$ is the weak$^*$ topology. Recall that the sets of the form
\[ K(\varepsilon, \mu, f) = \left\{ \nu \colon \left|\int f d \mu - \int f d \nu\right|<\varepsilon\right\} \]
for $\varepsilon>0$, $\mu \in \mathcal{M}$ and a bounded function $f\colon \omega \to \omega$, form a subbasis for this topology. In fact, we may assume that $f$ is a characteristic function so the sets
\[ K(\varepsilon, \mu, A) = \{ \nu \colon |\mu(A) - \nu(A)|<\varepsilon\} \]
for $\varepsilon>0$, $\mu \in \mathcal{M}$ and $A\subseteq \omega$, also form a subbasis for the same topology.

To finish this section, we present the following basic property of measures.

\begin{prop}\label{prop:measurefilternonmeager}
	Let $\mu$ be a measure and let $\mathcal{F}$ be the filter of sets of measure $1$, then $F \subseteq \mathcal{P}(\omega)$ is non-meager.
\end{prop}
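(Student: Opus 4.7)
The plan is to invoke Talagrand's characterization of non-meager filters: a free filter $\F\subseteq\mathcal{P}(\omega)$ is non-meager if and only if for every partition $\langle I_n\rangle_{n\in\omega}$ of $\omega$ into finite sets there is some $A\in\F$ with $A\cap I_n=\emptyset$ for infinitely many $n$. So I will fix such a partition and aim to produce a set $A$ of $\mu$-measure one that misses infinitely many blocks entirely.

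The key step is to push $\mu$ forward along the partition: define $\nu\colon\mathcal{P}(\omega)\to[0,1]$ by $\nu(S)=\mu(\bigcup_{n\in S}I_n)$. Finite additivity of $\mu$ clearly implies finite additivity of $\nu$, and since $\mu$ vanishes on finite sets we have $\nu(\{n\})=\mu(I_n)=0$, so $\nu$ is again a free finitely additive probability measure on $\omega$. If I can find an infinite $T\subseteq\omega$ with $\nu(T)=0$, then $A:=\omega\setminus\bigcup_{n\in T}I_n$ has $\mu(A)=1-\nu(T)=1$ and $A\cap I_n=\emptyset$ for every $n\in T$, which is exactly what Talagrand's condition demands.

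Hence the real content reduces to the auxiliary claim that every free measure on $\omega$ admits an infinite null set, which I would prove by contradiction. Assume every infinite subset has positive $\nu$-measure and bisect iteratively: starting from $A_0=\omega$ with $\nu(A_0)=1$, given an infinite $A_n$ with $\nu(A_n)\le 2^{-n}$, split $A_n$ into two infinite halves $B\sqcup C$ and let $A_{n+1}$ be whichever half has smaller $\nu$-measure, so that $\nu(A_{n+1})\le 2^{-n-1}$. Next, pick a strictly increasing sequence $x_n\in A_n$; the resulting infinite set $P=\{x_n:n\in\omega\}$ satisfies $P\subseteq^* A_n$ for every $n$ (since $x_k\in A_k\subseteq A_n$ whenever $k\ge n$), and therefore $\nu(P)=\nu(P\cap A_n)+\nu(P\setminus A_n)=\nu(P\cap A_n)\le\nu(A_n)\le 2^{-n}$ for every $n$, forcing $\nu(P)=0$ and contradicting the hypothesis.

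The only thing to watch for is that $\nu$ is merely finitely additive, so I must avoid summing measures over countably many sets; fortunately the argument uses nothing beyond the monotonicity bound $\nu(P)\le\nu(P\cap A_n)+\nu(P\setminus A_n)$ and the vanishing of $\nu$ on finite sets, both of which come for free.
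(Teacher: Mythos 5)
Your proof is correct, but it takes a genuinely different route from the paper's. Both arguments rest on Talagrand's characterization of meager filters, but you use it in the positive direction: you verify directly that for every partition $\langle I_n\rangle_n$ into finite sets there is a measure-one set disjoint from infinitely many blocks, by pushing $\mu$ forward to a free finitely additive measure $\nu$ on the index set and exhibiting an infinite $\nu$-null set $T$ via bisection plus a pseudo-intersection (the diagonal set $P=\{x_n\}$ with $P\setminus A_n$ finite, so $\nu(P)\le\nu(A_n)\le 2^{-n}$). The paper instead argues by contradiction: it observes that the quotient $\mathcal{P}(\omega)/\mathcal{F}^*$ is c.c.c.\ (since $\mu$ is a finitely additive probability measure), applies Talagrand's theorem to a hypothetically meager $\mathcal{F}$ to get an interval partition met cofinitely by every filter set, and then uses an uncountable almost disjoint family to manufacture an uncountable antichain of unions of intervals, contradicting c.c.c. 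Your version is more constructive and more elementary --- it avoids the uncountable almost disjoint family entirely and isolates the clean auxiliary fact that every free finitely additive probability measure on $\omega$ admits an infinite null set --- while the paper's version makes visible the structural reason for non-meagerness, namely the chain condition of the measure algebra, which is a reusable observation elsewhere in the subject. One small stylistic remark: your bisection argument does not actually need the contradiction framing, since it unconditionally produces the infinite null set $P$; you could state and prove the auxiliary claim directly.
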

\begin{proof}
	Notice that the quotient $\mathcal{P}(\omega) / \mathcal{F}^*$ is c.c.c, where $\mathcal{F}^*$ is the dual ideal. So, if $\mathcal{F}$ is meager, then, by Talagrand's theorem (see \cite{Talagrand} or \cite[Theorem 4.1.2]{Bartoszynski}) there must
	be an interval partition $\omega = \bigcup_{n\in\omega} I_n$ such that, for all $B \in \mathcal{F}$ and almost all $n\in \omega$, we have that $B \cap I_n \neq \emptyset$. So, if $\mathcal{A}$ is an uncountable almost disjoint family, then $\{
	\bigcup_{a \in A} I_a : A \in \mathcal{A} \}$ is an uncountable antichain in $\mathcal{P}(\omega) / \mathcal{F}^*$.
\end{proof}

Below, we present several ways of producing concrete non-atomic measures on $\omega$.

\section{Constructions of measures}\label{Schemes}

As we have mentioned in the introduction, every ultrafilter can be seen as a measure. For an ultrafilter $\mathcal{U}$ define the Dirac delta $\delta_\mathcal{U}(A)$ to be $1$ if $A\in \mathcal{A}$, otherwise $0$.  Sometimes in what follow we will
just identify ultrafilters with its Dirac deltas. In this article we are more interested in non-atomic measures. By a result of Rudin \cite{Rudin}, every compact non-scattered space carries a non-atomic measure. Since $\beta\omega$ is one of such spaces, there is a non-atomic measure on $\omega$. 

\subsection{Extensions of density.}\label{sec:density}

Recall that the asymptotic density is defined as
\[ d(A) = \lim_{n\to\infty} \frac{|A\cap n|}{n} \]
provided the limit exists. The asymptotic density behaves very much like a measure, it is just undefined for many subsets of $\omega$. The simplest way of extending it to a measure is by using an ultrafilter limit.

Let $\mathcal{U}$ be an ultrafilter on $\omega$. Define the function $d_\mathcal{U} \colon \mathcal{P}(\omega) \to [0,1]$, an \emph{ultrafilter density}, by
\[ d_\mathcal{U}(A) = \lim_{n\to\mathcal{U}} \frac{|A \cap n|}{n}.\]

It is straightforward to check that $d_\mathcal{U}$  is a measure on $\omega$ which vanishes on points (provided $\mathcal{U}$ is free) and which is non-atomic.
It \emph{extends density}, i.e. $d_\mathcal{U}(A) = d(A)$ when the latter is defined. 

This approach can be generalized. For a measure $\mu$ on $\omega$ we can define
\[ d_\mu(A) = \int \frac{|A \cap n|}{n} \ d\mu. \]
Notice that for an ultrafilter $\mathcal{U}$ we have $d_{\delta_\mathcal{U}} = d_\mathcal{U}$.
So, we may define a measure $d_\mathcal{U}$, then $d_{d_\mathcal{U}}$ and so on. However, some measures defined as above cannot be achieved in this process: 

\begin{prop}\label{fix} There is a measure $\mu$ such that $\mu = d_\mu$.
\end{prop}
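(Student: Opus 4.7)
The plan is to find $\mu$ as a fixed point of the operator $T\colon \mu \mapsto d_\mu$ via a standard compactness argument in $\mathcal{M} \cong (\ell_\infty)^*$ equipped with the weak$^*$ topology. Let $M \subseteq \mathcal{M}_1$ denote the set of free probability measures on $\omega$. By Banach--Alaoglu, $\mathcal{M}_1$ is weak$^*$-compact, and the conditions cutting $M$ out of $\mathcal{M}_1$---positivity $\mu \ge 0$, normalization $\mu(\omega) = 1$, and freeness $\mu(\{n\}) = 0$ for each $n \in \omega$---are each of the form $\langle \mu, f \rangle = c$ or $\langle \mu, f \rangle \ge 0$ for a fixed $f \in \ell_\infty$, and hence weak$^*$-closed. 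So $M$ is a nonempty, convex, weak$^*$-compact subset of the locally convex space $\mathcal{M}$.

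Next I would verify that $T$ restricts to a weak$^*$-continuous self-map of $M$. Positivity and finite additivity of $d_\mu$ are inherited directly from $\mu$ via the identity $|(A \sqcup B) \cap n| = |A \cap n| + |B \cap n|$, and $d_\mu(\omega) = \int 1\, d\mu = 1$. For freeness, $d_\mu(\{k\}) = \int g_k\, d\mu$ with $g_k(n) = \mathbf{1}_{n > k}/n \to 0$, so the integral vanishes because it extends the ordinary limit on free measures, as recalled in Section~\ref{sec:preliminaries}. For continuity, the key observation is that for every $A \subseteq \omega$ we have $T(\mu)(A) = \int f_A\, d\mu$ with $f_A(n) = |A \cap n|/n \in \ell_\infty$, so $\mu \mapsto T(\mu)(A)$ is the evaluation $\mu \mapsto \langle \mu, f_A \rangle$, which is weak$^*$-continuous by the very definition of the topology; and the weak$^*$ topology restricted to $M$ is generated by such evaluations on subsets of $\omega$.

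Having these ingredients in place, the Schauder--Tychonoff fixed-point theorem---any continuous self-map of a nonempty convex compact subset of a locally convex space has a fixed point---produces $\mu \in M$ with $\mu = d_\mu$. Since $T$ is in fact affine, one could equivalently invoke Markov--Kakutani applied to the one-element commuting family $\{T\}$. There is no serious obstacle here; the only subtlety worth flagging is the freeness check on $d_\mu$, which works precisely because we are restricting $T$ to the space of free measures, where the integral of a sequence tending to $0$ must vanish.
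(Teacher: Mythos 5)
Your proof is correct and follows essentially the same route as the paper: weak$^*$-compactness and convexity via Banach--Alaoglu, weak$^*$-continuity of $\mu \mapsto d_\mu$ because $d_\mu(A)$ is the evaluation $\langle \mu, f_A\rangle$ with $f_A(n)=|A\cap n|/n$, and then Schauder--Tychonoff. The one refinement you add --- restricting the operator to the closed convex set of \emph{free probability} measures and checking it is preserved --- is a worthwhile precision, since applying the fixed-point theorem on all of $\mathcal{M}_1$ as the paper does would in principle also admit degenerate fixed points such as the zero functional.
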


\begin{proof} Consider the mapping $d\colon \mathcal{M}_1 \to \mathcal{M}_1$ defined by $d(\mu) = d_\mu$. Notice that this mapping is continuous. Indeed, fix $\mu$, $\varepsilon>0$ and  $A\subseteq \omega$ and let $f$ be the sequence $(|A\cap n|/n)$.
	Take $\mu' \in d^{-1}[K(\varepsilon, \mu, A)]$. Then 
	\[ \left|\int f \ d\mu - \int f \ d\mu' \right| : = \delta < \varepsilon. \]
	Hence, $K(\varepsilon-\delta, \mu', f) \subseteq  d^{-1}[K(\varepsilon, \mu, A)]$ and so $d$ is continuous.

Since $\mathcal{M}_1$ is convex and compact (by Banach-Alaoglu theorem), we can use Schauder-Tychonoff fixed point theorem to conclude that there is $\mu$ such that $\mu=d(\mu)=d_\mu$.
\end{proof}

We finish this subsection by recalling the following well-known fact.

\begin{prop} If $\mu$ extends the density, then $\mu$, as a function from the Cantor set to $[0,1]$, is Lebesgue measurable. \end{prop}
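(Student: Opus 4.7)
The plan is to reduce Lebesgue measurability of $\mu\colon 2^\omega \to [0,1]$ to the fact that $\mu$ is almost everywhere constant with respect to the Lebesgue (product, coin-flipping) measure $\lambda$ on $2^\omega$. Recall that a function which agrees with a measurable function off a Lebesgue-null set is Lebesgue measurable, so it suffices to pin down $\mu(A)$ for $\lambda$-almost every $A \in 2^\omega$.

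The key input is the strong law of large numbers applied to the canonical i.i.d.\ Bernoulli$(1/2)$ coordinates on $2^\omega$. For $\lambda$-almost every $A \in 2^\omega$, the asymptotic density
\[ d(A) = \lim_{n\to\infty} \frac{|A \cap n|}{n} \]
exists and equals $1/2$. Moreover the set
\[ D_{1/2} = \left\{ A \in 2^\omega \colon d(A) \text{ exists and } d(A) = 1/2 \right\} \]
is Borel (it is defined by a countable combination of conditions on cylinder sets), and $\lambda(D_{1/2}) = 1$.

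By hypothesis $\mu$ extends $d$, so $\mu(A) = d(A) = 1/2$ for every $A \in D_{1/2}$. Thus $\mu$ agrees with the constant function $1/2$ off the $\lambda$-null set $2^\omega \setminus D_{1/2}$. Since the constant function is trivially Lebesgue measurable and the Lebesgue $\sigma$-algebra is complete, it follows that $\mu$ is Lebesgue measurable.

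There is essentially no obstacle here; the whole content is in invoking SLLN to observe that, despite $\mu$ being defined on all of $\mathcal{P}(\omega)$ in a potentially wild (non-Borel) way, the coincidence with $d$ on a conull set forces measurability. The same argument of course shows that such a $\mu$ is $\lambda$-a.e.\ equal to $1/2$, which is a strictly stronger statement than Lebesgue measurability.
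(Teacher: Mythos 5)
Your proof is correct and follows exactly the paper's argument: the paper's one-line proof invokes Borel's normal number theorem (equivalently, your SLLN application) to conclude that $d(A)=1/2$ for $\lambda$-almost all $A$, so $\mu$ agrees with the constant $1/2$ off a null set and is therefore Lebesgue measurable by completeness. You have merely written out the details that the paper leaves implicit.
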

	\begin{proof} By Borel's normal number theorem $\lambda(\{A\colon d(A)=1/2\}) = 1$.
	\end{proof} 

\subsection{Transfinite constructions \`a la Sikorski}\label{sec:transfinite}

The easiest way to obtain certain types of ultrafilters is to perform a construction under Continuum Hypothesis using transfinite induction of length $\omega_1$. As we will see, a similar situation holds for measures. In fact, we will not construct measures, but rather construct an embedding of $\mathcal{P}(\omega)$ into measure algebras (with measure $\lambda$), as if $\phi$ is such embedding, then $\mu(A) = \lambda(\varphi(A))$ is a measure on $\omega$. 

Let $\mathbb{C}$ be a complete Boolean algebra and let $\mathbb{A} \subseteq \mathcal{P}(\omega)$ be a Boolean algebra. For a Boolean homomorphism $\phi \colon \mathbb{A} \to \mathbb{C}$ define 
\[
	\phi^*(X) = \bigwedge \{ \phi(A) \colon A \in \mathbb{A}, \,\, X \subseteq A \}
\]
and
\[
	\phi_*(X) = \bigvee \{ \phi(A) \colon A \in \mathbb{A}, \,\, A \subseteq X \}
\]
for $X \in \mathcal{P}(\omega)$. By $\mathrm{alg}(\mathcal{A})$ we denote the Boolean algebra generated by $\mathcal{A}$ and let $\mathbb{A}(X) = \mathrm{alg}(\mathbb{A} \cup \{X\})$. 
\medskip

The following lemma will let us create the measure `step by step', by transfinite induction. The proof can be found in \cite{sikorski}.  

\begin{lem}\label{lem:Sikorski_extension}
    (\cite{sikorski}).
	Let $\mathbb{A}\subseteq \mathcal{P}(\omega)$ be a Boolean algebra that is closed under finite
	modifications, let $\mathbb{C}$ be a complete Boolean algebra and $\phi \colon \mathbb{A} \to \mathbb{C}$ be a homomorphism. If $X \subseteq \omega$ and $C \in \mathbb{C}$ is such that
    \[
        \phi_*(X) \subseteq C \subseteq \phi^*(X),
    \]
	then $\phi$ can be extended to a homomorphism $\phi' \colon \mathbb{A}(X) \to \mathbb{C}$ such that $\phi'(X) = C$. 
\end{lem}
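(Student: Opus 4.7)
The plan is to use the standard normal-form representation of elements of $\mathbb{A}(X) = \mathrm{alg}(\mathbb{A} \cup \{X\})$: every $Y \in \mathbb{A}(X)$ can be written as $Y = (A \cap X) \cup (B \setminus X)$ for some $A, B \in \mathbb{A}$. Since I want $\phi'(X) = C$ and $\phi'$ to extend $\phi$, the homomorphism property forces the definition
\[ \phi'\bigl((A \cap X) \cup (B \setminus X)\bigr) := (\phi(A) \wedge C) \vee (\phi(B) \wedge \neg C). \]
Three things then need verification: that this definition does not depend on the chosen representation, that $\phi'$ is a Boolean homomorphism, and that it satisfies $\phi'(X) = C$ and $\phi'|_{\mathbb{A}} = \phi$.

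The key step is well-definedness, and this is the only place where the sandwich hypothesis will enter. Suppose $(A \cap X) \cup (B \setminus X) = (A' \cap X) \cup (B' \setminus X)$; intersecting both sides with $X$ and with $\omega \setminus X$ yields $A \triangle A' \subseteq \omega \setminus X$ and $B \triangle B' \subseteq X$. For the first, $\omega \setminus (A \triangle A')$ belongs to $\mathbb{A}$ and contains $X$, so by definition of $\phi^*$ I get
\[ C \;\leq\; \phi^*(X) \;\leq\; \phi(\omega \setminus (A \triangle A')) \;=\; \neg\, \phi(A \triangle A'), \]
which is equivalent to $\phi(A) \wedge C = \phi(A') \wedge C$. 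Symmetrically, $B \triangle B' \in \mathbb{A}$ lies inside $X$, so $\phi(B \triangle B') \leq \phi_*(X) \leq C$, yielding $\phi(B) \wedge \neg C = \phi(B') \wedge \neg C$. Joining the two equalities shows that the value of $\phi'$ is independent of the chosen representation.

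The remaining verifications are routine. Compatibility with intersections follows by expanding the normal form of $Y \cap Y'$ and applying distributivity in $\mathbb{C}$ together with $C \wedge \neg C = 0$, and compatibility with complements is handled the same way. Taking $A = \omega$, $B = \emptyset$ gives $\phi'(X) = C$, while any $A \in \mathbb{A}$ written as $(A \cap X) \cup (A \setminus X)$ satisfies $\phi'(A) = (\phi(A) \wedge C) \vee (\phi(A) \wedge \neg C) = \phi(A)$, so $\phi'$ extends $\phi$. The only real obstacle is the well-definedness step: it is precisely where the hypothesis $\phi_*(X) \leq C \leq \phi^*(X)$ is unavoidable, and without it neither of the two key inequalities above could be drawn. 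The extra assumption that $\mathbb{A}$ is closed under finite modifications is not actually needed for this algebraic argument; it is presumably included so that the transfinite iteration of the lemma produces measures that are free in the sense of the paper.
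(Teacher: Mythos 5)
Your proof is correct; it is the standard one-step extension argument for Sikorski's criterion, and every step (the normal form, the well-definedness via the two sandwich inequalities, the homomorphism checks) goes through as you describe. The paper itself gives no proof of this lemma --- it simply defers to \cite{sikorski} --- so there is nothing to contrast with; your closing observation that closure under finite modifications is not needed for the algebraic argument, but only for the way the lemma is iterated elsewhere in the paper, is also accurate.
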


We will show now the simplest application of the above for the purpose of constructing a measure on $\omega$. Recall that the measure algebra $\mathbb{B}$ is complete.

\begin{prop}\label{simplest} There is a Boolean epimorphism $\phi\colon \mathcal{P}(\omega) \to \mathbb{B}$.
\end{prop}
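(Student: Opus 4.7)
The plan is to apply Lemma \ref{lem:Sikorski_extension} inside a transfinite recursion of length $\mathfrak{c}$, carrying out two bookkeeping tasks in parallel: every subset of $\omega$ must enter the domain, and every element of $\mathbb{B}$ must be hit by the homomorphism.

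For the base step I would fix a partition $\{P_s : s \in 2^{<\omega}\}$ of $\omega$ into infinite sets with $P_\emptyset = \omega$ and $P_s = P_{s \frown 0} \sqcup P_{s \frown 1}$, and let $\mathbb{A}_0 \subseteq \mathcal{P}(\omega)$ be the countable algebra generated by the $P_s$'s together with all finite sets (so $\mathbb{A}_0$ is closed under finite modifications). Setting $\phi_0(P_s) = [s]$ (the class of the basic cylinder in $\mathbb{B}$) and $\phi_0(F) = 0$ for finite $F$ determines a Boolean homomorphism $\phi_0 \colon \mathbb{A}_0 \to \mathbb{B}$ whose image $\mathbb{B}_0$ is the subalgebra of $\mathbb{B}$ generated by the cylinder classes; a standard approximation argument in $(2^\omega, \lambda)$ (every Borel set is inner and outer approximated in measure by finite unions of cylinders) shows that $\mathbb{B}_0$ is dense in $\mathbb{B}$.

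For the recursion, enumerate $\mathcal{P}(\omega) = \{X_\alpha : \alpha < \mathfrak{c}\}$ and $\mathbb{B} = \{b_\alpha : \alpha < \mathfrak{c}\}$. At stage $\alpha + 1$, given $\phi_\alpha \colon \mathbb{A}_\alpha \to \mathbb{B}$, I would first bring $X_\alpha$ into the domain by choosing any $C$ in the always-nonempty interval $[\phi_{\alpha,*}(X_\alpha), \phi_\alpha^*(X_\alpha)]$ and applying Lemma \ref{lem:Sikorski_extension}. Second, I would find a witness $Y \in \mathcal{P}(\omega)$ with $\phi_{\alpha,*}(Y) \leq b_\alpha \leq \phi_\alpha^*(Y)$ and apply Lemma \ref{lem:Sikorski_extension} again to set $\phi_{\alpha+1}(Y) = b_\alpha$. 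Limit stages are handled by taking unions; closure under finite modifications is preserved throughout since $\mathbb{A}_0$ already contains every finite set. The final $\phi = \bigcup_\alpha \phi_\alpha$ then has domain $\mathcal{P}(\omega)$ and image containing every $b_\alpha$.

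The technical core is the construction of the witness $Y$. Using density of $\mathbb{B}_0$, I would pick sequences $\{c_n\}, \{d_n\} \subseteq \mathbb{B}_0$ with $c_n \uparrow b_\alpha$ and $d_n \downarrow b_\alpha$ in $\mathbb{B}$, and let $P_n, Q_n \in \mathbb{A}_0$ be preimages under $\phi_0$. Since $c_n \leq d_m$ in $\mathbb{B}$ and $\ker \phi_0 = [\omega]^{<\omega}$, we have $P_n \subseteq^* Q_m$ for all $n, m$. A routine diagonal construction (deleting a sufficiently long initial segment from each $P_n$) produces $Y \subseteq \omega$ with $P_n \subseteq^* Y \subseteq^* Q_m$ for every $n, m$. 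Because each $\mathbb{A}_\alpha$ is closed under finite modifications and $\phi_\alpha$ vanishes on finite sets, this sandwich transfers: any $A \in \mathbb{A}_\alpha$ with $A \subseteq Y$ satisfies $\phi_\alpha(A) \leq \phi_\alpha(Q_m) = d_m$ for every $m$, whence $\phi_\alpha(A) \leq \bigwedge_m d_m = b_\alpha$, and dually for supersets of $Y$ in $\mathbb{A}_\alpha$. The main obstacle is this diagonal construction together with the verification of the sandwich; once past it, the induction is pure bookkeeping.
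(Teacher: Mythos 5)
There is a genuine gap at what you yourself identify as the technical core. Your witness construction requires sequences $\langle c_n\rangle$, $\langle d_n\rangle$ in the countable clopen subalgebra $\mathbb{B}_0$ with $c_n \leq b_\alpha \leq d_n$ and $\bigvee_n c_n = b_\alpha = \bigwedge_n d_n$: only then does the transfer step $\phi_\alpha(A) \leq \bigwedge_m d_m = b_\alpha$ (and its dual) yield $\phi_{\alpha,*}(Y) \leq b_\alpha \leq \phi_\alpha^{*}(Y)$. But the clopen algebra is dense in $\mathbb{B}$ only metrically (small symmetric difference), not in the order sense, and you are conflating the two. If $b_\alpha$ is the class of a closed nowhere dense set of positive measure, then every nonzero clopen $C$ has $\lambda(C\setminus B)>0$, so the only element of $\mathbb{B}_0$ below $b_\alpha$ is $\mathbf{0}$ and $\bigvee_n c_n = \mathbf{0} \neq b_\alpha$; dually, if $b_\alpha$ is the class of a dense open set of measure $<1$, the only element of $\mathbb{B}_0$ above it is $\mathbf{1}$. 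This cannot be repaired by choosing a cleverer countable subalgebra: a countable order-dense subalgebra of $\mathbb{B}$ would force $\mathbb{B}$ to be the Cohen algebra rather than the random algebra. Consequently $\phi_{\alpha,*}(Y)$ may strictly exceed $b_\alpha$ and Lemma \ref{lem:Sikorski_extension} cannot be invoked to set $\phi(Y)=b_\alpha$; the surjectivity half of your recursion breaks down.

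The paper avoids this in two ways. Its ZFC proof starts from an independent family $\mathcal{A}\subseteq\mathcal{P}(\omega)$ of size $\mathfrak{c}$ and a bijection $f\colon\mathcal{A}\to\mathbb{B}$; independence makes $\mathrm{alg}(\mathcal{A})$ free, so $f$ extends to a homomorphism whose image is already all of $\mathbb{B}$, and completeness of $\mathbb{B}$ (Sikorski's extension theorem) then extends it to $\mathcal{P}(\omega)$ -- surjectivity is secured at the outset rather than stage by stage. The alternative transfinite argument is run only under CH, in $\omega_1$ steps, precisely so that each $\mathbb{A}_\alpha$ stays countable; then the families $\mathcal{L}=\{X\colon\phi(X)\leq B_\alpha\}$ and $\mathcal{R}=\{X\colon B_\alpha\leq\phi(X)\}$ are countable and the nonexistence of $(\omega,\omega)$-gaps supplies the interpolant directly, with no appeal to a fixed dense subalgebra. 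If you want a ZFC proof, switch to the independent-family route.
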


The usual ZFC proof of Proposition \ref{simplest} is to start with an independent family $\mathcal{A}$ of subsets of $\omega$, of size $\mathfrak{c}$. Define a bijection $f\colon \mathcal{A} \to \mathbb{B}$ which, by the fact that $\mathcal{A}$ is independent, can be 
automatically extended to a homomorphism $\phi\colon \mathrm{alg}(\mathcal{A}) \to \mathbb{B}$ and then extend $\phi$ to a homomorphism $\phi\colon \mathcal{P}(\omega) \to \mathbb{B}$.

However, under CH, we have more flexibility: Start with a trivial homomorphism $\phi_0\colon \{0,\omega\} \to \mathbb{B}$ and enumerate $ \mathbb{B} =
\langle B_\alpha\rangle_{\alpha<\omega_1}$. Then, for $\alpha<\omega_1$, use Lemma \ref{lem:Sikorski_extension} to extend $\phi_\alpha$ to $\phi_{\alpha+1}$ in such a way that $B_\alpha$ is in the range of $\phi_{\alpha+1}$. To see that this is possible  let $\mathcal{L} = \{X\subseteq \omega\colon \phi(X)\subseteq B_\alpha\}$, $\mathcal{R} = \{X\subseteq \omega\colon B_\alpha \subseteq \phi(X)\}$. Both $\mathcal{L}$ and $\mathcal{R}$ are countable and for
each $X\in \mathcal{L}$, $Y\in \mathcal{R}$ we have $X\subseteq Y$. So, there is $Z\subseteq \omega$ such that $X \subseteq^* Z \subseteq^* Y$ for each $X\in \mathcal{L}$, $Y\in \mathcal{R}$, since $(\omega,
\omega)$-gaps do not exists (see \cite{Scheepers}). At a limit step $\gamma$, for $\phi_\gamma$ we take the union of all the homomorphisms defined so far. 

The advantage of the proof using transfinite induction is that it can be easily modified to prove existence of homomorphisms with some additional properties. For example, the following was proved in \cite{Kunen}:  

\begin{prop}\label{prop:ker} (CH) There is a Boolean epimorphism $\phi\colon \mathcal{P}(\omega) \to \mathbb{B}$ whose kernel is a P-ideal.
\end{prop}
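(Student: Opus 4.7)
The plan is to carry out the CH transfinite recursion of Proposition~\ref{simplest} while interleaving one additional task at each successor stage: beyond putting a new element of $\P(\om)$ into the domain and a new element of $\mathbb{B}$ into the range, handle a countable subfamily of the current kernel by forcing one of its pseudo-unions into the kernel. Under CH, enumerate $\mathbb{B}=\langle B_\al\rangle_{\al<\om_1}$ and $\P(\om)=\langle X_\al\rangle_{\al<\om_1}$, and fix a bijection $\sigma\colon\om_1\to\om_1\times\om_1$ with $\sigma(\al)=(\be,\ga)\Rightarrow\be\le\al$. The construction will produce an increasing chain of Boolean homomorphisms $\phi_\al\colon\mathbb{A}_\al\to\mathbb{B}$ with $\mathbb{A}_\al$ a countable subalgebra of $\P(\om)$ closed under finite modifications, starting from the Fr\'echet algebra mapped trivially and taking unions at limits. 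As soon as $\mathbb{A}_\be$ is defined, one enumerates the countable subfamilies of $\ker(\phi_\be)$ (of which there are $\aleph_1$ under CH) as $\langle\F^\be_\ga\rangle_{\ga<\om_1}$.

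At stage $\al+1$ with $\sigma(\al)=(\be,\ga)$, Lemma~\ref{lem:Sikorski_extension} will be applied three times. First, put $X_\al$ into the domain with any admissible value. Second, put $B_\al$ into the range exactly as in the proof of Proposition~\ref{simplest}, using the absence of $(\om,\om)$-gaps. Third, the new task: find a set $A\subseteq\om$ with $A_n\subseteq^* A$ for every $n$ (where $\F^\be_\ga=\{A_n\}_{n\in\om}$) and with $(\phi_\al)_*(A)=0$, so that Lemma~\ref{lem:Sikorski_extension} allows us to extend by $\phi(A)=0$, placing the pseudo-union into the kernel.

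The crux will be the third task. Because $\mathbb{A}_\al$ is closed under finite modifications, the condition $(\phi_\al)_*(A)=0$ is equivalent to $Y\setminus A$ being infinite for every $Y\in\mathbb{A}_\al$ with $\phi_\al(Y)>0$ (if $Y\setminus A$ were finite, then $Y\cap A\in\mathbb{A}_\al$ would witness $(\phi_\al)_*(A)\ge\phi_\al(Y)>0$). Writing $T=\om\setminus A$ and $T_n=\om\setminus\bigcup_{k\le n}A_k$, we need $T\subseteq^* T_n$ for every $n$ and $Y\cap T$ infinite for every element of the countable collection $\{Y\in\mathbb{A}_\al\colon\phi_\al(Y)>0\}$. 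Enumerating these $Y$'s as $(Y_k)_{k\in\om}$ and fixing $(k_j)_{j\in\om}$ in which every $k$ appears infinitely often, recursively pick $t_j\in Y_{k_j}\cap T_j$ strictly larger than all previously chosen elements; this is possible because $\phi_\al(Y_{k_j}\cap T_j)=\phi_\al(Y_{k_j})>0$ forces $Y_{k_j}\cap T_j$ to be infinite. The set $T:=\{t_j:j\in\om\}$ then satisfies $T\setminus T_n\subseteq\{t_0,\dots,t_{n-1}\}$ for every $n$ and $T\cap Y_k\supseteq\{t_j:k_j=k\}$ is infinite for every $k$, as required.

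Finally, $\phi=\bigcup_{\al<\om_1}\phi_\al$ will be a Boolean epimorphism $\P(\om)\to\mathbb{B}$ by the first and second tasks. For the P-ideal property of $\ker(\phi)$, any countable $\{A_n\}\subseteq\ker(\phi)$ lies in $\ker(\phi_\be)$ for some $\be<\om_1$ (take $\be=\sup_n\be_n$, where $A_n$ first appears at stage $\be_n$), hence equals $\F^\be_\ga$ for some $\ga$, and the third task at the unique stage $\al$ with $\sigma(\al)=(\be,\ga)$ produces a pseudo-union in $\ker(\phi_{\al+1})\subseteq\ker(\phi)$. The only non-routine ingredient is the diagonal construction of $T$; everything else is bookkeeping parallel to Proposition~\ref{simplest}.
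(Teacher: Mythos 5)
Your proposal is correct and follows essentially the same route the paper indicates: the transfinite Sikorski-style construction of Proposition~\ref{simplest} augmented with one extra ``promise'' per stage, which the paper states dually (pseudo-intersections of $\phi$-value $\mathbf{1}$ rather than pseudo-unions inside the kernel) and defers to \cite{Kunen}. Your diagonal construction of $T$ correctly supplies the density argument that makes the promise realizable at each countable stage.
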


To prove it one has to add another 'promise' in the construction described above: for every countable $\mathcal{A}$ such that $\phi(A) = 1$ for each $A\in \mathcal{A}$, there is a pseudo-intersection $P$ of $\mathcal{A}$ such that $\phi(P)=1$. It is
not hard to see (and can be found in \cite{Kunen}) that this promise can be satisfied along the transfinite construction described above and that the resulting homomorphism $\phi$ will have the desired property. We will use this scheme, modifying some of these promises.

\subsection{Generic measure.}\label{sec:generic}

First, recall a standard fact about extending measures on Boolean algebras (which can be seen as a corollary of Lemma \ref{lem:Sikorski_extension}, see also e.g. \cite{Plachky}). \begin{lem}\label{lem:sikorski-measure}
	Suppose that $\mu$ is a measure defined on a Boolean algebra $\mathbb{A}\subseteq \mathcal{P}(\omega)$ closed under finite modifications, and that $X\subseteq \omega$. Let $r\in [0,1]$ be such that $\mu_*(X)\leq r \leq \mu^*(X)$. Then $\mu$ can
	be extended to a measure $\nu$ on $\mathbb{A}(X)$ such that $\nu(X)=r$.
\end{lem}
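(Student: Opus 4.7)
The idea is to reduce the measure extension problem to the Boolean-homomorphism extension problem already handled by Lemma \ref{lem:Sikorski_extension}. First, I would pick a complete Boolean algebra $\mathbb{C}$ carrying a $\sigma$-additive probability measure $\lambda$, together with a measure-preserving homomorphism $\phi\colon \mathbb{A}\to\mathbb{C}$, i.e., $\lambda\circ\phi = \mu$. A convenient concrete choice is to take $\mathbb{C}$ to be the measure-algebra product of the metric completion of $\mathbb{A}/\mu$ with the Lebesgue measure algebra $\mathbb{B}$, and set $\phi(A) := [A]_\mu\otimes 1_\mathbb{B}$; the Lebesgue factor will supply the atomlessness needed below.

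The key technical observation is that $\lambda(\phi_*(X))=\mu_*(X)$ and $\lambda(\phi^*(X))=\mu^*(X)$. Both follow from c.c.c.\ of $\mathbb{C}$ and continuity of $\lambda$ along the upward/downward directed families $\{\phi(A):A\in\mathbb{A},\ A\subseteq X\}$ and $\{\phi(A):A\in\mathbb{A},\ A\supseteq X\}$. Using atomlessness of the $\mathbb{B}$-factor I can then pick $C\in\mathbb{C}$ with $\phi_*(X)\le C\le \phi^*(X)$ and $\lambda(C)=r$, explicitly $C := \phi_*(X)\vee\bigl((\phi^*(X)\setminus \phi_*(X))\wedge (1\otimes B)\bigr)$ for a Borel set $B\subseteq 2^\omega$ of the Lebesgue measure that makes the total come out to $r$ (and $C := \phi_*(X)$ in the trivial case $\mu_*(X)=\mu^*(X)$). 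Lemma \ref{lem:Sikorski_extension} now extends $\phi$ to a homomorphism $\phi'\colon \mathbb{A}(X)\to\mathbb{C}$ with $\phi'(X)=C$, and the pullback $\nu(Y) := \lambda(\phi'(Y))$ is the desired measure: it is finitely additive and probability because $\phi'$ is a homomorphism and $\lambda$ a measure, it extends $\mu$ because $\phi$ was measure-preserving on $\mathbb{A}$, and $\nu(X) = \lambda(C) = r$ by construction.

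The delicate point is the choice of $\mathbb{C}$. If one worked inside the completion of $\mathbb{A}/\mu$ alone, the gap $\phi^*(X)\setminus \phi_*(X)$ could contain atoms of $\lambda$, and then $\lambda(C)$ could only take certain discrete values --- in the extreme case, just the two endpoints $\mu_*(X)$ and $\mu^*(X)$. Tensoring with $\mathbb{B}$ is the standard way to recover the whole interval $[\mu_*(X),\mu^*(X)]$, and this is exactly the step where the full hypothesis $\mu_*(X)\le r\le \mu^*(X)$ is used.
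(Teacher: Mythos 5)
Your argument is correct and is precisely the route the paper itself indicates: the paper gives no proof of Lemma \ref{lem:sikorski-measure}, stating only that it is standard and ``can be seen as a corollary of Lemma \ref{lem:Sikorski_extension}'' (citing Plachky), and your proposal is a careful realization of exactly that reduction. The two technical points you supply --- continuity of $\lambda$ along the directed families to get $\lambda(\phi_*(X))=\mu_*(X)$ and $\lambda(\phi^*(X))=\mu^*(X)$, and tensoring with $\mathbb{B}$ so that an intermediate element $C$ with $\lambda(C)=r$ exists --- are both sound and are the only places where care is needed.
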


We present a natural example of a generic measure. Recall that the forcing $\mathcal{P}(\omega)/\mathrm{Fin}$ (with the reverse inclusion ordering) adds generically an ultrafilter. The following is its measure counterpart.

\begin{exa}\label{exa:generic_measure}
We  consider the family of all (free) \emph{partial measures} on $\omega$:
	\[ \mathbb{P} = \{\mu\colon \mu\mbox{ is a free measure on a countable Boolean algebra }\mathbb{A}\subseteq \mathcal{P}(\omega)\} \]
	with the ordering given by $\mu\leq \nu$ if $\mu \supseteq \nu$ (so, if the domain of $\nu$ is a subalgebra of the domain of $\mu$ and the measures agree on its elements).

	Then $\mathbb{P}$ is a forcing notion which is $\sigma$-closed: for every decreasing sequence $\langle \mu_n\rangle_n$, there is $\mu\in \mathbb{P}$ such that $\mu\leq \mu_n$ for each $n$.
	The forcing $\mathbb{P}$ adds generically a measure $\dot{\mu}$ on $\omega$ (as for every $X\subseteq \omega$ and a countable Boolean algebra $\mathbb{A}\subseteq \mathcal{P}(\omega)$ we can always extend $\mu$ to $\mathbb{A}(X)$, using  Lemma
	\ref{lem:sikorski-measure}). 

	Also, $1\forces{ \dot{\mu} \text{ is non-atomic}}{}$: Fix $N\in \omega, N>0$ and $\nu$ defined on a countable $\mathbb{A}\subseteq \mathcal{P}(\omega)$. One can build inductively a partition $\langle P_i\rangle_{i< N}$ of $\omega$ such that no $P_i$ contains an
	infinite element of $\mathbb{A}$. Then, $\nu_*(P_i)=0$ and $\nu^*(P_i)=1$ for every $i < N$. Hence, using Lemma \ref{lem:sikorski-measure} we can extend $\nu$ to $\nu'$ defined on $\mathbb{A}(P_0, P_1, \dots, P_{N-1})$ in such a way that $\nu'(P_i)=1/N$ 	for each $i$.

\end{exa}
	
\subsection{Solovay measures.}\label{sec:solovay}

In \cite{PbnDamian} the authors dust off an old construction due to Solovay (see \cite{Solovay}) of measures in the random model. We will sketch the construction (for the details, see \cite{PbnDamian}). 

Fix an ultrafilter $\mathcal{U}$ on $\omega$. 
We construct a $\mathbb{B}$-name for a measure on $\omega$ generated by $\mathcal{U}$.

Let $\dot{M}$ be an $\mathbb{B}$-name for a subset of $\omega$. Define the function $\mu_{\dot{M}}\colon\mathbb{B}\to[0,1]$ as follows:
\[ \mu_{\dot{M}}(B) = \lim_{k\to \mathcal{U}} \lambda \big(\llbracket k\in \dot{M} \rrbracket \wedge B\big). \]
It is immediate that $\mu_{\dot{M}}$ is the name of a finitely additive measure on $\mathbb{B}$. Also, for each $\mathbb{B}$-name $\dot{M}$ for a subset of $\omega$, the measure $\mu_{\dot{M}}$ is $\sigma$-additive and $\mu_{\dot{M}}$ is absolutely continuous with respect to $\lambda$. Therefore, there is a Radon-Nikodym derivative of $\mu_{\dot{M}}$, i.e. a measurable function $f_{\dot{M}}$ such that $\mu_{\dot{M}}(A) = \int_A f_{\dot{M}} \ d\lambda$.

Now let $\dot{r}_{\dot{M}}$ be the value of $f_{\dot{M}}$ on the generic real. Finally, let $\dot{\mu}_\mathcal{U}$ be a $\mathbb{B}$-name for a function
$\mathcal{P}(\omega)\to[0,1]$ such that for every $\mathbb{B}$-name $\dot{M}$ for a subset of $\omega$ we have
\[\forces{\dot{\mu}_\mathcal{U}(\dot{M}) = \dot{r}(\mu_{\dot{M}}).}{\mathbb{M}_\kappa} \]

In  \cite{Solovay} it is proved that 
 \[ \forces{\dot{\mu}_\mathcal{U} \mbox{ is a finitely additive probability measure on }\omega}{\mathbb{B}}. \]
Also, in \cite{PbnDamian} it is proved that  $\Vdash_{\mathbb{B}}``\dot{\mu}_\mathcal{U} \mbox{ is non-atomic }".$ 

We call $\dot{\mu}_\mathcal{U}$ a \emph{generic Solovay measure (associated to $\mathcal{U}$)}.

\subsection{Transfinite constructions \`a la Mokobodzki}\label{sec:Mokobodzki}

Under CH (or, more generally, under the assumption that $ \mathrm{cov}(\mathcal{M}) = \mathfrak{c}$) there is a measure $\mu$ on $\omega$ which is universally measurable, i.e. it is such that it is $\rho$-measurable as a function $\mu \colon 2^\omega
\to [0,1]$ for every Borel measure $\rho$ on the Cantor set. Such a measure is called a medial measure or Mokobodzki measure and it was constructed by Mokobodzki (\cite{Meyer}) and independently by Christensen (\cite{Christensen}) (see also
\cite{Godefroy}, \cite[538S]{Fremlin5}). 

We will present the idea of the construction and in Section \ref{sec:minimality} we will use it to define a peculiar example of a measure on $\omega$.

It will be convenient for us to change our setting and to use functionals rather than measures (see Section \ref{sec:preliminaries}). 
We will recursively construct sequences of functionals on $\ell^\infty$ with finite support. Note that any uniformly bounded sequence of functionals $\langle c_n \rangle_n$ can be treated as a linear function from $\ell^\infty$ to $\ell^\infty$ by putting \[c(x)(n)=c_n(x).\]
Define $e_n:\ell^\infty \to \RR$ by $e_n(x)=x(n)$.

\begin{df}
	Let $\langle c_n \rangle_n,\langle d_n\rangle_n$ be two sequences of bounded functionals on $\ell^\infty$. We say that $\langle c_n \rangle_n \preceq \langle d_n \rangle_n$ whenever for all $n$ we have $c_n\in conv(d_m:m\geqslant n)$, where
	$conv(X)$ is the convex hull of $X$.  Furthermore, let $\langle c_n \rangle_n \preceq^* \langle d_n \rangle_n$ whenever for all but finitely many $n$ we have $c_n\in conv(d_m:m\geqslant n)$. Both $\preceq$ and $\preceq^*$ are preorders.
\end{df}

\begin{df}
	A functional $c:\ell^\infty\to \RR$ has \emph{finite support} if it is a convex combination of $e_n$'s, i.e. it is of the form
	\[ c = \sum_{n\in F} a_n e_n, \]
	where each $a_n$ is non-negative and $\sum_{n\in F} a_n = 1$. We will call the set $F$ the \emph{support} of $c$.
\end{df}

Observe that, if $\langle c_n\rangle_n \preceq \langle e_n \rangle_n$, then each $c_n$ has finite support. Additionally, for any $r\in \RR$,  $c_n(\vec{r})=r$, where $\vec{r}$ is the sequence constantly equal $r$. The following fact is the hearth of the construction.
\begin{fact}\label{coherence}
	If $\langle c_n\rangle_n \preceq^* \langle d_n\rangle_n$ and the sequence $(d_n(x))_n$ converges, then \[ \lim_n c_n(x)=\lim_n d_n(x).\]
\end{fact}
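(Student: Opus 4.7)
The plan is to unwind the definition and use the fact that convex combinations of numbers close to a common value stay close to that value. Suppose $d_n(x) \to L$ and that $\langle c_n\rangle \preceq^* \langle d_n\rangle$, so there is $N_0$ such that for all $n \geq N_0$ we can write
\[
    c_n = \sum_{k=1}^{K_n} \lambda_{n,k}\, d_{m_{n,k}},
\]
with $m_{n,k} \geq n$, $\lambda_{n,k} \geq 0$, and $\sum_k \lambda_{n,k} = 1$. Since these functionals are linear, evaluating at $x$ yields
\[
    c_n(x) = \sum_{k=1}^{K_n} \lambda_{n,k}\, d_{m_{n,k}}(x).
\]

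Now fix $\varepsilon > 0$. By the hypothesis that $d_n(x) \to L$, pick $N \geq N_0$ such that $|d_m(x) - L| < \varepsilon$ for every $m \geq N$. For any $n \geq N$ and any $k \leq K_n$ we have $m_{n,k} \geq n \geq N$, hence $|d_{m_{n,k}}(x) - L| < \varepsilon$. Using that the $\lambda_{n,k}$ form a probability vector,
\[
    |c_n(x) - L| = \Bigl|\sum_k \lambda_{n,k}\bigl(d_{m_{n,k}}(x) - L\bigr)\Bigr| \leq \sum_k \lambda_{n,k}\,\varepsilon = \varepsilon.
\]
Thus $c_n(x) \to L$, as desired.

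There is no real obstacle here; the only point to watch is that the definition gives a genuine \emph{finite} convex combination at each $n$, which is exactly what allows the triangle-inequality estimate to be uniform in $k$ once the tail $m \geq N$ is reached. The $\preceq^*$ (rather than $\preceq$) relaxation is harmless because finitely many initial $c_n$ do not affect the limit.
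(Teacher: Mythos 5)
Your proof is correct and is exactly the standard verification: unwind $\preceq^*$ into finite convex combinations with indices tending to infinity, then use that a convex combination of numbers within $\varepsilon$ of $L$ stays within $\varepsilon$ of $L$. The paper states this Fact without proof, treating it as immediate, and your argument is precisely the one the authors implicitly rely on.
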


The Mokobodzki construction is performed under CH, by first defining a sequence $\langle \langle c_n^\alpha\rangle_n:\alpha\in \omega_1\rangle$ which is $\preceq^*$-increasing. Along the way, the limits of $c^\alpha(x)$ exist for more and more $x\in \ell^\infty$. Finally we define a functional by $\varphi(x)=\lim_n c_n^\alpha(x)$ for the smallest $\alpha$ where such limit exists.

As an illustration of the above, we can perform the simplest construction of this form. Assume CH and enumerate $\ell^\infty = \{x^\alpha\colon \alpha<\omega_1\}$. Start with $c_n^0 = e_n$ and suppose we have defined $\langle c_n^\alpha\rangle_n$. There is a
sequence $k_n$ such that $\lim c^\alpha_{k_n}(x_n)$ does exist. Define $c_n^{\alpha+1} = c_{k_n}^\alpha$ for each $n$. If $\alpha$ is a limit ordinal, then fix a cofinal sequence $(\alpha_n)_n$ and find $(k_n)$ such that $\lim
c^{\alpha_n}_{k_n}(x^\alpha_n)$ does exist and proceed as in the successor case.
Clearly $\langle c^\alpha_n\rangle_n$ is an $\preceq^*$-increasing sequence, with respect to $\alpha$, and for each $x\in \ell^\infty$ there is $\alpha$ such that
$\lim c^\alpha_n (x)$ does exist. Finally, we obtain a functional, $\varphi(x)=\lim_n c_n^\alpha(x)$ for the smallest $\alpha$ where such limit exists (see \ref{coherence}). 

The above construction is in fact nothing else but a limit along an ultrafilter (when we were picking subsequences, we actually chose subsets in an ultrafilter). So, the measure obtained in this way is just the Dirac delta of an ultrafilter. If instead we start with
$c_n^0 = (e_1 + \dots + e_n)/n$ and proceed in the above way, then we would get a measure of the form $\mu(A) = \lim_{n\to \mathcal{U}} |A\cap n|/n$. 

The construction of Mokobodzki imposes more conditions on the sequences $\langle c^\alpha_n\rangle_n$ to obtain universal measurability of the resulting functional. In Section \ref{sec:minimality} we will add yet another conditions to obtain a measure
which is minimal with respect to the Rudin-Blass ordering and which is not a Q$^+$-measure.

\section{Rudin-Blass and Rudin-Keisler orderings on measures}\label{sec:pqs}

We start with a definition inspired by the well-known notion of Rudin-Blass ordering on filters (see \cite{Laflamme-RB}). By $f[\mu]$ we mean the measure $\nu$ such that $\nu(A) = \mu (f^{-1}[A])$ for each $A$.

\begin{df}\label{def:rudin_blass_ordering}
	Let $\mu, \nu$ be measures on $\omega$. We say that $\nu$ is \emph{Rudin--Blass reducible} to $\mu$ (denoted by $\nu \leq_{RB} \mu$), if there is a finite-to-1 function $f \colon \omega \to \omega$ such that $f[\mu] = \nu$. 
\end{df}

\begin{df}\label{def:nearly_Dirac}
	We say that a measure $\mu$ is \emph{nearly ultra} if there exists an ultrafilter $\mathcal{U}$ such that $\delta_{\mathcal{U}} \leq_{RB} \mu$. 
\end{df} 

It is known that every ultrafilter extension of the asymptotic density is nearly ultra. Also, consistently (under Filter Dichotomy) every measure is nearly ultra. However, consistently there is
a measure which is not nearly ultra (for the proofs of all these statements, see \cite{cancino}). 
\vspace*{.5em}

We say that a measure $\mu$ is Rudin-Blass minimal (RB-minimal, in short) if and only if for any $\nu$ such that $\nu \leq_{RB} \mu$ we have $\mu \leq_{RB} \nu$. 
It is known that an ultrafilter is a Q-point if and only if it is minimal in the Rudin--Blass ordering for ultrafilters. 

\vspace*{.5em}

\begin{df}\label{def:rudin_keisler_ordering}
	Let $\mu, \nu$ be measures on $\omega$. We say that $\nu$ is \emph{Rudin--Keisler reducible} to $\mu$ (denoted by $\nu \leq_{RK} \mu$) if there is a function $f \colon \omega \to \omega$ such that $f[\mu] = \nu$.
\end{df}

So, Rudin-Blass ordering is just Rudin-Keisler ordering with the 'finite-to-one' requirement. Again, the Rudin-Keisler ordering was investigated for ultrafilters and it is known that being a selective ultrafilter is equivalent to being minimal in this ordering (see \cite{Blass-thesis}). 

The structure of the both Rudin-Blass and Rudin-Keisler orderings on ultrafilters has been extensively studied (see e.g. \cite{Verner-Dilip}, \cite{Borisa-Dilip}).

\begin{prop}\label{ksi} Suppose that $f\colon \omega \to \omega$ and $\mu$ are such that $f[\mu] = \mu$. Then for every  $N\subseteq \omega$ we have $\mu(f[N]) \geq \mu(N)$.
\end{prop}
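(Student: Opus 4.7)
The proof should be essentially immediate from the set-theoretic identity $N \subseteq f^{-1}[f[N]]$ combined with monotonicity of $\mu$ and the invariance hypothesis.

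First, I would observe that for any function $f$ and any set $N$, we have the inclusion $N \subseteq f^{-1}[f[N]]$; this is a tautology, since every $n \in N$ maps to some point of $f[N]$ and hence lies in the preimage. Since $\mu$ is a (monotone) finitely additive probability measure, applying $\mu$ to both sides gives $\mu(N) \leq \mu(f^{-1}[f[N]])$.

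Next, I would use the definition of the pushforward measure together with the hypothesis $f[\mu] = \mu$. By Definition \ref{def:rudin_blass_ordering}, $f[\mu](A) = \mu(f^{-1}[A])$, so in particular $\mu(f^{-1}[f[N]]) = f[\mu](f[N])$, which equals $\mu(f[N])$ by the assumption that $f[\mu] = \mu$. Chaining these equalities and inequalities yields $\mu(N) \leq \mu(f[N])$, as required.

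There is no real obstacle here; the statement is a direct consequence of monotonicity plus the tautological inclusion $N \subseteq f^{-1}[f[N]]$, and it does not even require that $f$ be finite-to-one. The only thing worth noting is that we use monotonicity of $\mu$ (finite additivity together with $\mu \geq 0$), which holds for all measures considered in this paper.
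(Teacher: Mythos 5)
Your proof is correct and is exactly the paper's argument: the tautological inclusion $N \subseteq f^{-1}[f[N]]$, monotonicity of $\mu$, and the identity $\mu(f^{-1}[A]) = \mu(A)$ coming from $f[\mu] = \mu$. Nothing to add.
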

\begin{proof} Since $N \subseteq f^{-1}[f[N]]$ and $\mu(f^{-1}[A]) = \mu(A)$ for every $A$, the conclusion follows.
\end{proof}

\begin{prop} Suppose that $\mu$ is a measure on $\omega$. Then, there is a finite-to-one function $g\colon \omega \to \omega$ such that $g[\mu] = g[d_\mu]$ and, consequently, there is $\rho \leq_{RB} \mu, d_\mu$.
\end{prop}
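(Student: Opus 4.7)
I begin by rewriting the target. For any finite-to-one $g$, the condition $g[\mu](A) = g[d_\mu](A)$ means $\mu(g^{-1}[A]) = d_\mu(g^{-1}[A])$; unwinding the definition of $d_\mu$, this is
\[
\mu(g^{-1}[A]) = \int \frac{|g^{-1}[A]\cap n|}{n}\,d\mu(n).
\]
Hence, letting $\nu := \mu - d_\mu$ (a bounded signed measure on $\omega$), the problem is to find a finite-to-one $g$ such that $\nu$ vanishes on the subalgebra $\mathcal{B}_g := \{g^{-1}[A] : A \subseteq \omega\} \subseteq \mathcal{P}(\omega)$; equivalently, on the algebra generated by the partition $\{g^{-1}(k)\}_{k \in \omega}$ of $\omega$ into finite pieces.

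The natural attempt is to take $g$ of the simplest shape: pick a fast-growing sequence $n_0 < n_1 < \ldots$, put $I_k = [n_k, n_{k+1})$, and let $g \equiv k$ on $I_k$. Writing $N_A(j) = \sum_{k\in A,\,k<j}|I_k|$, a direct computation gives, for $n\in I_j$,
\[
\frac{|g^{-1}[A]\cap n|}{n} = \frac{N_A(j) + \mathbbm{1}_{j\in A}(n-n_j)}{n},
\]
so the pointwise discrepancy between $\mathbbm{1}_{g^{-1}[A]}$ and the integrand of $g[d_\mu](A)$ reduces to $(N_A(j)-\mathbbm{1}_{j\in A}n_j)/n$ on each block $I_j$. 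Choosing $n_{k+1}/n_k$ very large lets us control the contribution of the earlier intervals and, by a careful inductive choice of the $n_k$, force the cumulative contributions of $\mu$ and $d_\mu$ to match at each finite stage.

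\textbf{The main obstacle} is precisely that the required equality $g[\mu]=g[d_\mu]$ must hold \emph{simultaneously} for the uncountable family of all $A \subseteq \omega$, while the inductive construction only directly controls countably many conditions. A naive interval construction typically produces only shift-like identities of the form $\mu(\bigcup_{k\in A}I_k)\approx \mu(\bigcup_{k\in A}I_{k+1})$, which is not what we want unless $\mu$ happens to have such invariance. To bridge this gap, I would combine the inductive construction with a compactness argument in the compact convex space $\mathcal{M}_1$ (unit ball of signed measures in weak$^*$), in the spirit of the fixed-point method used in Proposition \ref{fix}: treat $g \mapsto (g[\mu], g[d_\mu])$ as a continuous assignment into $\mathcal{M}_1 \times \mathcal{M}_1$, and extract a finite-to-one $g$ realizing a point on the diagonal using Schauder--Tychonoff together with a diagonal thinning of the approximating partitions.

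Once such $g$ is produced, the ``consequently'' clause is immediate: set $\rho := g[\mu] = g[d_\mu]$. Then the same finite-to-one $g$ witnesses $\rho \leq_{RB} \mu$ (viewing $g$ as reducing $\mu$ to $\rho$) and $\rho \leq_{RB} d_\mu$ (viewing $g$ as reducing $d_\mu$ to $\rho$), so $\rho$ is the desired common Rudin--Blass predecessor of $\mu$ and $d_\mu$.
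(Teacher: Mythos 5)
Your reduction of the statement to finding a finite-to-one $g$ such that $\mu$ and $d_\mu$ agree on the subalgebra $\{g^{-1}[A]:A\subseteq\omega\}$ is correct, and your block computation is on the right track, but the proposal stops exactly where the real work begins, and the fix you sketch would not go through. The collection of finite-to-one functions is not a compact convex subset of a topological vector space, and $g\mapsto(g[\mu],g[d_\mu])$ is not a self-map of anything, so Schauder--Tychonoff cannot be invoked; the fixed-point argument of Proposition \ref{fix} works because $\nu\mapsto d_\nu$ is a continuous affine self-map of $\mathcal{M}_1$, and it has no analogue here. Moreover, the obstacle you single out (uncountably many sets $A$ versus countably many inductive steps) is not resolved by compactness in the paper's argument but by \emph{uniformity}: for $B$ a union of interval blocks whose endpoints grow fast enough, the estimate $\bigl|\,|B\cap n|/n-\chi_B(n)\bigr|<\varepsilon$ for all large $n$ outside a fixed exceptional set depends only on the block structure, not on which blocks were selected, so a single verification handles all $A$ simultaneously.

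The genuinely missing idea is how to kill the transition zones. If $I_k=[f(k),f(k+1))$ with $f(k+1)/f(k)\to\infty$ and $B=\bigcup_{k\in A}I_k$, then for $n$ just above $f(k)$ with $k\in A$ the ratio $|B\cap n|/n$ can still be close to $0$ although $\chi_B(n)=1$; hence $\int|B\cap n|/n\,d\mu$ need not equal $\mu(B)$ and the naive blocking fails, no matter how fast $f$ grows. The paper's solution is to use the non-meagerness of the family of null sets (Proposition \ref{prop:measurefilternonmeager}, applied so as to get a set null for both $\mu$ and $d_\mu$) to extract an infinite set $N$ of block indices with $\mu\bigl(\bigcup_{k\in N}I_k\bigr)=d_\mu\bigl(\bigcup_{k\in N}I_k\bigr)=0$, and then to merge the remaining blocks into sets $J_i$ each of which begins with a null buffer $J_i\setminus K_i\subseteq\bigcup_{k\in N}I_k$. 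The buffer absorbs precisely the region where the density has not yet caught up with $\chi_B$, and off the buffers the uniform estimate gives $d_\mu\bigl(\bigcup_{i\in A}J_i\bigr)=\int\chi_B\,d\mu=\mu\bigl(\bigcup_{i\in A}J_i\bigr)$ for every $A$ at once. Your closing paragraph, deriving $\rho\le_{RB}\mu,d_\mu$ from the existence of $g$, is correct and matches the paper.
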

\begin{proof}
	Let $f\colon \omega \to \omega$ be a function such that $\lim_{n} f(n+1)/f(n) = \infty$ and let $I_n = \big[f(n), f(n+1)\big)$ for each $n$.

	By the fact that $\{A\colon d_\mu(A)=0\}$ is non-meager, we may find an infinite $N\subseteq \omega$ such that $d_\mu\big(\bigcup_{k\in N} I_k\big)=0$. Let $\langle F_n\rangle_n$ be a sequence of intervals such that $N = \bigcup_n F_n$ and $\max F_n + 1 < \min F_{n+1}$ for
	every $n$. Finally, define $J_n =\big[f(\min F_n), f(\min F_{n+1}) \big)$ and $K_n = \big[f(\max F_n), f(\min F_{n+1}) \big)$.
	\medskip

	\textbf{Claim.} For each $A\subseteq \omega$ we have $\mu\big(\bigcup_A J_i\big) = d_\mu\big(\bigcup_A J_i\big)$.
	\medskip

	Fix $A\subseteq \omega$. First, notice that we have $\mu( \bigcup_A J_i ) = \mu (\bigcup_A K_i)$ and so also $d_\mu( \bigcup_A J_i ) = d_\mu (\bigcup_A K_i)$. Denote $B = \bigcup_A K_i$.
	For each $\varepsilon>0$ there is $m$ such that for every $n>m$ we have
	\[ \left|\frac{|B \cap n|}{n} - \chi_{B}(n)\right| < \varepsilon. \]
By the fact that $\mu$ vanishes on points, it means that
	\[ d_\mu\Big( \bigcup_A J_i\Big) = d_\mu(B)  = \int \frac{|B \cap n|}{n} \ d\mu = \int \chi_B \ d\mu = \mu(B) = \mu\Big(\bigcup_A J_i\Big). \]

	To conclude the proof of the proposition, define $g\colon \omega \to \omega$ by $g(k) = i$ if $k\in J_i$.
\end{proof}

\begin{cor} For every ultrafilter $\mathcal{U}$, there is an ultrafilter $\mathcal{V}$ such that $\mathcal{V} \leq_{RB} d_\mathcal{U}, \mathcal{U}$. 
\end{cor}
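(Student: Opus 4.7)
The plan is to apply the previous proposition directly, taking $\mu$ to be the Dirac delta of the ultrafilter $\mathcal{U}$. Recall that $\delta_\mathcal{U}$ is a (free) measure on $\omega$ and that, by the definition of $d_\mu$ from Section \ref{sec:density}, we have $d_{\delta_\mathcal{U}} = d_\mathcal{U}$.

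First I would invoke the preceding proposition with $\mu := \delta_\mathcal{U}$ to produce a finite-to-one function $g \colon \omega \to \omega$ satisfying
\[ g[\delta_\mathcal{U}] = g[d_{\delta_\mathcal{U}}] = g[d_\mathcal{U}]. \]
Next I would define $\mathcal{V} := g_*(\mathcal{U}) = \{A \subseteq \omega : g^{-1}[A] \in \mathcal{U}\}$, which is an ultrafilter on $\omega$. Since $g$ is finite-to-one, preimages of finite sets under $g$ are finite and hence not in the free ultrafilter $\mathcal{U}$, so $\mathcal{V}$ is free. The only small observation to record is that the pushforward of $\delta_\mathcal{U}$ as a measure agrees with the Dirac delta of the pushforward ultrafilter: indeed, for any $A \subseteq \omega$, $g[\delta_\mathcal{U}](A) = \delta_\mathcal{U}(g^{-1}[A])$, which is $1$ if $g^{-1}[A] \in \mathcal{U}$ (i.e. $A \in \mathcal{V}$) and $0$ otherwise, so $g[\delta_\mathcal{U}] = \delta_\mathcal{V}$.

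Combining these, $\delta_\mathcal{V} = g[\delta_\mathcal{U}]$ shows $\mathcal{V} \leq_{RB} \mathcal{U}$ via the finite-to-one map $g$, and $\delta_\mathcal{V} = g[d_\mathcal{U}]$ shows $\mathcal{V} \leq_{RB} d_\mathcal{U}$ via the same map. There is no real obstacle here; the corollary is essentially a restatement of the preceding proposition specialised to $\mu = \delta_\mathcal{U}$, once one notes that a finite-to-one image of an ultrafilter (measure) is again an ultrafilter (measure).
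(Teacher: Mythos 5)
Your proposal is correct and is exactly the intended argument: the paper leaves the corollary without proof precisely because it is the preceding proposition specialised to $\mu = \delta_\mathcal{U}$, using $d_{\delta_\mathcal{U}} = d_\mathcal{U}$ and the observation that the finite-to-one pushforward of $\delta_\mathcal{U}$ is $\delta_\mathcal{V}$ for the (free) ultrafilter $\mathcal{V} = g_*(\mathcal{U})$. Nothing is missing.
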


\section{Special kinds of measures}\label{sec:qmeasure}

\subsection{P-measures}\label{sec:p_measures}

In this section we present the notion of P-measure, the natural generalization of P-points to the measure context. 
It was studied by several people, starting with Mekler \cite{mekler} (as AP measures, or measures with the additive property), Blass, Frankiewicz, Plebanek and Ryll-Nardzewski \cite{blassple}, Grebik \cite{grebik}, Kunisada \cite{Kunisada}. 
Here we present the definitions and basic facts about them. 

First, recall the definition of a P-filter.

\begin{df}\label{df:p_filter}
	We say that a filter $\mathcal{F}$ is a \emph{P-filter} if for each family $\langle A_n\rangle_n$ such that $A_n \in \F$ for all $n \in \omega$, there is a set $X \in \F$ which is a pseudo-intersection of $\langle A_n\rangle_n$. We say that a filter $\mathcal{U}$
	is a P-point if it is an ultrafilter and a P-filter.
\end{df}

The following definition appeared in \cite{mekler}:

\begin{df} A measure $\mu$ is \emph{a P-measure} if for every descending sequence $\langle A_n\rangle_n$  there is a pseudo-intersection $P$ of $\langle A_n\rangle_n$ such that $\mu(P) = \lim \mu(A_n)$. 
\end{df}

The above definition can be rewritten equivalently in the language of partitions (which will be slightly more convenient for our purposes). We say that a set $S$ is a \emph{pseudo-selector} of a partition $\langle A_n\rangle_n$  of $\omega$ if $S\cap A_n$ is finite for each $n$.

\begin{prop} A measure $\mu$ is a P-measure if and only if for every partition $\langle A_n\rangle_n$ there is a pseudo-selector $S$ of $\langle A_n\rangle_n$ such that \[ \mu(S) = 1 - \sum_{n} \mu(A_n). \]
\end{prop}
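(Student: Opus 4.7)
The plan is to prove the two directions by a natural duality between partitions and decreasing sequences, together with an explicit construction that translates a pseudo-selector witness to a pseudo-intersection witness and vice versa.

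For the direction $(\Rightarrow)$, I would take a partition $\langle A_n\rangle_n$ and form the decreasing sequence of tails $T_n = \omega \setminus \bigcup_{k<n} A_k$. Finite additivity gives $\mu(T_n) = 1 - \sum_{k<n} \mu(A_k)$, so $\lim_n \mu(T_n) = 1 - \sum_n \mu(A_n)$. Applying the P-measure property yields a pseudo-intersection $P$ of $\langle T_n\rangle_n$ whose measure equals this limit. Since $P \cap A_k \subseteq P \setminus T_{k+1}$ is finite for every $k$, the set $P$ is the required pseudo-selector with $\mu(P)=1-\sum_n\mu(A_n)$.

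For the direction $(\Leftarrow)$, given a decreasing sequence $\langle A_n\rangle_n$ with $c = \lim_n \mu(A_n)$, I would consider the successive differences $C_n = A_n \setminus A_{n+1}$ for $n \geq 0$ together with the extra piece $C_{-1} = (\omega \setminus A_0) \cup D$, where $D = \bigcap_n A_n$. These sets form a partition of $\omega$ (disjointness and cover are immediate from $D \subseteq A_{n+1}$). A telescoping computation yields $\sum_{k \geq -1} \mu(C_k) = 1 - c + \mu(D)$, so the hypothesis produces a pseudo-selector $S$ with $\mu(S) = c - \mu(D)$. I then set $P = S \cup D$: since $D \subseteq C_{-1}$ and $S \cap C_{-1}$ is finite, the intersection $S \cap D$ is finite too, so $\mu(P) = \mu(S) + \mu(D) = c$, while $P \setminus A_n = S \setminus A_n \subseteq S \cap (C_{-1} \cup C_0 \cup \cdots \cup C_{n-1})$ is a finite union of finite sets.

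The main subtlety is the handling of the leftover set $D = \bigcap_n A_n$, which is not covered by the successive differences alone and on which a finitely additive $\mu$ need not behave $\sigma$-additively. Absorbing $D$ into the single piece $C_{-1}$ and then adding it back to the pseudo-selector at the end is what keeps the bookkeeping honest; the key observation making this work is that $S$, being a pseudo-selector, automatically meets $C_{-1}$ (and hence $D$) only in a finite set. Once that is in place, both the measure and the pseudo-intersection verifications reduce to routine finite-additivity arguments.
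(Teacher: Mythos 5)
Your proof is correct; the paper states this proposition without proof, treating the equivalence as routine, and your argument (tails of the partition for one direction, successive differences with the residual set $D=\bigcap_n A_n$ absorbed into a single piece for the other) is precisely the standard verification the authors have in mind. The only unstated check is that the target value $c-\mu(D)$ is nonnegative, which holds because $D\subseteq A_n$ gives $\mu(D)\le\mu(A_n)$ for every $n$, hence $\mu(D)\le c$.
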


Notice that an ultrafilter $\mathcal{U}$ is a P-point if and only if $\delta_\mathcal{U}$ is a P-measure. The standard example of a P-measure which is non-atomic is this given in Section \ref{sec:density}: 

\begin{thm}\label{thm:density} \cite{mekler}
Let $\mathcal{U}$ be a P-point. Then the ultrafilter density
\[ d_\mathcal{U}(A) = \lim_{n\to \mathcal{U}} \frac{|A\cap n|}{n} \]
is a non-atomic P-measure on $\omega$. 
\end{thm}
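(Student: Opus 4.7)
The plan is to verify separately that $d_\mathcal{U}$ is non-atomic and that it is a P-measure. Non-atomicity is immediate: for every $N\geq 1$ the $N$ residue classes modulo $N$ partition $\omega$ into sets of asymptotic density $1/N$, and since $d_\mathcal{U}$ extends the asymptotic density, each block has $d_\mathcal{U}$-measure $1/N$, which is arbitrarily small. So I will concentrate on the P-measure property.

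Adopting the descending formulation, fix a decreasing sequence $\langle B_n\rangle_n$ and set $r_n = d_\mathcal{U}(B_n)$; since the sequence is decreasing and bounded, $r_n \searrow r$ for some $r \in [0,1]$. The task is to produce a pseudo-intersection $P$ of $\langle B_n\rangle_n$ with $d_\mathcal{U}(P) = r$. The key idea is to use P-pointness of $\mathcal{U}$ to secure a single set along which the partial averages for every $B_n$ are under simultaneous control. For each $n$, the set
\[
T_n = \left\{ m \in \omega : \left| \frac{|B_n \cap m|}{m} - r_n \right| < \tfrac{1}{n} \right\}
\]
belongs to $\mathcal{U}$ by the definition of $r_n$ as an ultrafilter limit, so P-pointness produces $T \in \mathcal{U}$ with $T \subseteq^* T_n$ for every $n$. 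Enumerate $T = \{t_0 < t_1 < \ldots\}$.

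I then extract a rapidly growing subsequence $t_{k_0} < t_{k_1} < \ldots$ arranged so that $t_{k_n} \in T_j$ for every $j \leq n$ and so that $\sum_{n < K} t_{k_{n+1}}/n = o(t_{k_K})$. Define
\[
P = \bigcup_{n \in \omega} \big( B_n \cap [t_{k_n}, t_{k_{n+1}}) \big).
\]
The pseudo-intersection property comes for free: anything in $P$ past $t_{k_m}$ lies in some $B_n$ with $n \geq m$, hence in $B_m$, so $P \setminus B_m \subseteq t_{k_m}$ is finite.

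The remaining task---and the main technical obstacle---is verifying $d_\mathcal{U}(P) = r$. For $M = t_\ell$ with $k_K \leq \ell < k_{K+1}$, a telescoping decomposition gives
\[
|P \cap M| = \sum_{n < K} \big( |B_n \cap t_{k_{n+1}}| - |B_n \cap t_{k_n}| \big) + \big( |B_K \cap M| - |B_K \cap t_{k_K}| \big).
\]
Each endpoint lies in the appropriate $T_n$, so $|B_n \cap x| = r_n x + O(x/n)$; the chosen sparsity of $\langle t_{k_n}\rangle_n$ collapses all these errors into an $o(M)$ term. The main piece $\sum_{n < K} r_n(t_{k_{n+1}} - t_{k_n}) + r_K(M - t_{k_K})$ is squeezed between $rM$ (using $r_n \geq r$) and $(r+\varepsilon)M + o(M)$ for any fixed $\varepsilon>0$, once $K$ is large enough that $r_n < r + \varepsilon$. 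Hence $|P\cap M|/M \to r$ as $\ell \to \infty$; since $T \in \mathcal{U}$, this forces $d_\mathcal{U}(P)=r$, completing the verification.
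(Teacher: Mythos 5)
The paper does not prove this statement; it is quoted from Mekler's paper with only a citation, so there is no in-text proof to compare against. Your argument is the standard one and is essentially correct: non-atomicity via residue classes is fine, and the P-measure verification --- pull the sets $T_n$ of good approximation points into $\mathcal{U}$, use P-pointness to get a single $T$ almost contained in all of them, and assemble $P$ from blocks $B_n\cap[t_{k_n},t_{k_{n+1}})$ with a sparsity condition that absorbs the accumulated errors --- is exactly how one proves this. One point needs tightening: in the final estimate you evaluate $|P\cap M|/M$ at $M=t_\ell$ for an \emph{arbitrary} $\ell$ with $k_K\le \ell<k_{K+1}$, and you need $t_\ell\in T_K$ to control the term $|B_K\cap M|$; your stated arrangement only guarantees $t_{k_n}\in T_j$ for $j\le n$, i.e.\ membership for the \emph{selected} points. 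The fix is immediate --- since $T\subseteq^* T_j$, choose $k_n$ so large that every element of $T$ above $t_{k_n}$ lies in $T_j$ for all $j\le n$ --- but as written the claim ``each endpoint lies in the appropriate $T_n$'' is not literally justified for $M=t_\ell$. With that adjustment (and starting the $T_n$ at $n\ge 1$ to avoid the $1/0$), the squeeze $rM-O(1)\le |P\cap M|\le (r+\varepsilon)M+o(M)$ goes through and $d_\mathcal{U}(P)=r$ follows from $T\in\mathcal{U}$.
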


In \cite{cancino} the authors gave a fundamentally different example of a P-measure. Namely, they proved that the measure induced by the homomorphism from Proposition \ref{prop:ker} is a P-measure. It follows from the fact that the filter of measure $1$ sets is a P-filter and that the Lebesgue measure $\lambda$ is
$\sigma$-additive on $\mathbb{B}$.

The main result of \cite{mekler} was that consistently there are no P-measures. Mekler asked if the existence of P-measures imply the existence of P-points. The problem was solved recently in \cite{cancino}:

\begin{thm}\cite{cancino}
Consistently, there is a P-measure but there are no P-points.
\end{thm}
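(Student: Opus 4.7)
The plan is to destroy all P-points through a countable support iteration of proper forcings, while carefully preserving a specific P-measure constructed in the ground model. This follows the standard template for such consistency-gap results between ultrafilter and measure notions.

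First, working in a ground model $V$ satisfying CH, I would invoke Proposition \ref{prop:ker} to fix a Boolean epimorphism $\phi \colon \mathcal{P}(\omega) \to \mathbb{B}$ whose kernel $\mathcal{I}$ is a P-ideal, and set $\mu(A) := \lambda(\phi(A))$. As already noted after Theorem \ref{thm:density}, this $\mu$ is a P-measure: given a decreasing $\langle A_n \rangle_n$ with $\mu(A_n) \to r$, the $\sigma$-additivity of $\lambda$ on $\mathbb{B}$ together with the P-ideal property of $\ker\phi$ produces a pseudo-intersection $P$ of $\langle A_n \rangle_n$ such that $\phi(P) = \bigwedge_n \phi(A_n)$, hence $\mu(P) = r$. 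So the ground model hosts a P-measure whose filter $\mathcal{F}_\mu = \{ A : \mu(A) = 1 \}$ is a P-filter of a very rigid kind.

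Next, I would choose a proper forcing $\mathbb{Q}(\mathcal{U})$ which, given a ground-model ultrafilter $\mathcal{U}$, adds a partition of $\omega$ into finite intervals witnessing that $\mathcal{U}$ is not a P-point in the extension (e.g.\ a Matet-type or Grigorieff-type forcing relative to $\mathcal{U}$). I would then form a countable-support iteration $\langle \mathbb{P}_\alpha, \dot{\mathbb{Q}}_\alpha : \alpha < \omega_2 \rangle$, with a bookkeeping that hits every name for an ultrafilter appearing at some stage, so that the final extension $V[G]$ contains no P-points at all. The bite of the construction is that $\mathbb{Q}(\mathcal{U})$ must be selected so that it preserves the P-measure property of $\mu$ — concretely, so that for every descending sequence in $\mathcal{F}_\mu$ (even one appearing in the extension), there remains a pseudo-intersection in $\mathcal{F}_\mu$ with the correct measure. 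Given the right local preservation property, a Shelah-style preservation theorem for countable-support iterations of proper forcings lifts this to the full $\omega_2$-iteration, yielding $V[G]$ in which $\mu$ is still a P-measure and yet no P-points exist.

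The main obstacle, and the heart of the argument, is the second step: identifying a forcing which simultaneously kills P-points and respects the P-measure, and isolating a preservation property strong enough to be iterated but weak enough to survive such a P-point-killing forcing. Two subtleties deserve attention. First, after forcing, $\mathcal{P}(\omega)$ grows, so $\mu$ must be re-interpreted via its $\sigma$-additive extension to the Stone space of $\mathcal{P}(\omega)^V$ and then restricted to the new $\mathcal{P}(\omega)$; one must verify that the resulting extended measure is still well-defined and still a P-measure. Second, one must rule out the scenario in which the generic partition added by $\mathbb{Q}(\mathcal{U})$ combines with a new set of measure $1$ to produce a ``Hausdorff gap-like'' obstruction in $\mathcal{F}_\mu$. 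Overcoming these points is where the construction from \cite{cancino} does its real work, and where the P-ideal-kernel form of $\mu$ supplied by Proposition \ref{prop:ker} is exploited in an essential way.
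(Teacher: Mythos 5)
The paper does not actually prove this theorem: it only cites \cite{cancino}, where the witnessing model is the Silver extension. Your opening move is correct and does match the ingredient the paper attributes to \cite{cancino}: under CH the measure $\mu(A)=\lambda(\phi(A))$ induced by the epimorphism of Proposition \ref{prop:ker} is a P-measure (given a decreasing $\langle A_n\rangle_n$, pick $X$ with $\phi(X)=\bigwedge_n\phi(A_n)$; each $X\setminus A_n$ lies in the P-ideal $\ker\phi$, so some $Y\in\ker\phi$ almost contains all of them and $P=X\setminus Y$ is a pseudo-intersection of the right measure).

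The gap is that everything after that is a template, not a proof. You posit ``a proper forcing $\mathbb{Q}(\mathcal{U})$ which kills the P-point $\mathcal{U}$ while respecting the P-measure'' and ``a Shelah-style preservation theorem,'' exhibit neither, and yourself flag this as the heart of the argument --- but that heart is precisely the content of the theorem. Concretely: first, the standard P-point killers (Grigorieff-type forcings) are proper only relative to P-points, and no off-the-shelf local preservation property is known to keep a finitely additive measure a P-measure through a countable-support iteration of them; the actual proof does not iterate ultrafilter-tailored forcings at all, but works in the Silver model, where the nonexistence of P-points is itself a nontrivial separate theorem (Chodounsk\'y--Guzm\'an) and the real work is producing a P-measure there. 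Second, your handling of the extension problem is wrong as stated: new subsets of $\omega$ in $V[G]$ do not correspond to clopen (or Borel) subsets of the ground-model Stone space in any canonical way, so ``restrict the $\sigma$-additive extension to the new $\mathcal{P}(\omega)$'' does not define a measure. One must genuinely extend the finitely additive measure to $\mathcal{P}(\omega)^{V[G]}$ and then verify the P-property against decreasing sequences of \emph{new} sets, which is exactly where the specific combinatorics of the chosen forcing must enter. As it stands, the proposal correctly identifies the opening move and the difficulties, but establishes neither half of the consistency statement.
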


In \cite{mekler} Mekler introduced another definition (under the name of AP0 measures):

\begin{df} A measure $\mu$ is a \emph{P$_0$-measure} if for every descending sequence $\langle A_n\rangle_n$ of measure 1, there is a pseudo-intersection $P$ of $\langle A_n\rangle_n$ such that $\mu(P) = \lim \mu(A_n) (=1)$. 
\end{df}

An ultrafilter $\mathcal{U}$ is a P-point if and only if $\delta_\mathcal{U}$ is a P-measure, if and only if $\delta_\mathcal{U}$ is a P$_0$-measure. So, in fact P$_0$-measures are also possible generalizations of P-points to the context of
measures.
Notice that $\mu$ is a P$_0$-measure if and only if the filter $\{F\colon \mu(F)=1\}$ is a P-filter. So, the property of being a P$_0$-measure is more a property of the filter of measure 1 sets.

\begin{prop}\label{prop:p_stronger_than_p0}
(CH) There exists a P$_0$-measure that is not a  P-measure.
\end{prop}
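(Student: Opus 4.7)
The strategy is a Sikorski-style transfinite construction under CH, along the lines of Proposition \ref{prop:ker}, producing a Boolean epimorphism $\phi\colon \mathcal{P}(\omega)\to\mathbb{B}$ such that (a) $\phi$ vanishes on the ideal $\mathcal{I}$ of pseudo-selectors of a fixed partition $\omega=\bigsqcup_n I_n$ into infinite sets, and (b) the filter $\phi^{-1}(1)$ is a P-filter. Then $\mu=\lambda\circ\phi$ is a P$_0$-measure by (b); and for the descending sequence $D_n=\bigsqcup_{k\geq n} I_k$, (a) forces $\mu(P)=0$ on each pseudo-intersection $P$ of $\langle D_n\rangle$ (every such $P$ lies in $\mathcal{I}$), while one can arrange $\mu(D_n)\downarrow 1/2$ by choosing pairwise disjoint $b_n\in\mathbb{B}$ with $\lambda(b_n)=2^{-(n+2)}$ and setting $\phi_0(I_n)=b_n$. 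Hence $\mu$ is not a P-measure.

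The induction enumerates $\mathcal{P}(\omega)$, $\mathbb{B}$, and countable families of sets in order type $\omega_1$, extending $\phi_\alpha\colon\mathbb{A}_\alpha\to\mathbb{B}$ at each stage via Lemma \ref{lem:Sikorski_extension}. To preserve (a), when inserting a new set $X$ one confines $\phi(X)$ to the sub-interval
\[\bigl[\textstyle\bigvee\{\phi_\alpha(A)\colon A\in\mathbb{A}_\alpha,\ A\setminus X\in\mathcal{I}\},\ \neg\bigvee\{\phi_\alpha(A)\colon A\in\mathbb{A}_\alpha,\ A\cap X\in\mathcal{I}\}\bigr]\]
of the Sikorski range, which is non-empty because if $A\cap X$ and $A'\setminus X$ both lie in $\mathcal{I}$ then so does $A\cap A'$, and hence $\phi_\alpha(A)\wedge\phi_\alpha(A')=0$ by (a). It is also convenient to maintain the auxiliary invariant that $\phi_\alpha(F)=1$ only for $F$ with $F^c\in\mathcal{I}$; this is enforced by, whenever the enumeration brings up some $Y\subseteq\omega$ with $Y\notin\mathcal{I}$, inserting beforehand a witness $A^*=Y\cap I_k$ (for a $k$ with $Y\cap I_k$ infinite) and assigning $\phi(A^*)$ some positive element below $b_k$, so that $\phi^*(X)<1$ for every $X$ with $X^c\notin\mathcal{I}$ considered at a later stage.

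The main obstacle is the P-filter step: given a countable $\mathcal{B}_\alpha\subseteq \phi_\alpha^{-1}(1)$, one must add a pseudo-intersection $P$ with $\phi(P)=1$ compatible with (a). Under the auxiliary invariant, each $F\in\mathcal{B}_\alpha$ has $F^c\in\mathcal{I}$; and $\mathcal{I}$ is a P-ideal (a diagonal construction of $Y$ with $Y\cap I_k=\bigcup_{n\leq k}(X_n\cap I_k)$ works), so one finds $X\in\mathcal{I}$ with $F^c\subseteq^* X$ for every $F\in\mathcal{B}_\alpha$. Setting $P=\omega\setminus X$ yields a pseudo-intersection of $\mathcal{B}_\alpha$ that is cofinite in each $I_k$; since $A\cap P\cap I_k=A\cap I_k$ modulo a finite set, $A\cap P\notin\mathcal{I}$ for every $A\notin\mathcal{I}$, in particular for every $A\in\mathbb{A}_\alpha$ with $\phi(A)>0$. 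Consequently, declaring $\phi(P)=1$ is compatible with (a), and the P-filter property is secured.
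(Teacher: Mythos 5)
Your overall architecture (vanish on the ideal $\mathcal{I}$ of pseudo-selectors of $\langle I_n\rangle_n$, pin $\mu(I_n)=2^{-(n+2)}$ so that $\mu(D_n)\to 1/2$ while every pseudo-intersection of $\langle D_n\rangle_n$ lies in $\mathcal{I}$) is a sensible way to kill the P-property, and your interval refinement of the Sikorski range correctly propagates condition (a). The fatal problem is the auxiliary invariant. Requiring that $\phi_\alpha(F)=1$ only for $F$ with $F^c\in\mathcal{I}$, on top of (a), amounts to demanding $\ker\phi=\mathcal{I}$ on the whole of $\mathcal{P}(\omega)$, i.e.\ that $\mathcal{P}(\omega)/\mathcal{I}$ embeds into $\mathbb{B}$. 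This is impossible: take an uncountable almost disjoint family $\langle A_\xi\rangle_{\xi<\omega_1}$ of infinite subsets of $I_0$. Each $A_\xi\notin\mathcal{I}$, so the invariant forces $\phi(A_\xi)>0$, while $A_\xi\cap A_\eta$ is finite, hence in $\mathcal{I}$, so (a) forces $\phi(A_\xi)\wedge\phi(A_\eta)=0$. That is an uncountable antichain in the ccc algebra $\mathbb{B}$ (equivalently, $\sum_\xi\lambda(\phi(A_\xi))\le\lambda(b_0)$ forces all but countably many values to vanish). So your preemptive-witness device must break down at some countable stage: for some $X\notin\mathcal{I}$ the supremum $\bigvee\{\phi(A)\colon A\cap X\in\mathcal{I}\}$ will already equal $\mathbf{1}$, the admissible interval for $\phi(X\cap I_k)$ collapses to $\{\mathbf{0}\}$, and you are forced to put $X$ into the kernel. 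Since your P-filter step rests entirely on the invariant (you need every $F$ with $\phi(F)=1$ to have $F^c\in\mathcal{I}$ in order to invoke the P-ideal property of $\mathcal{I}$ and take $P=\omega\setminus X$), the construction does not close off. A repair along your lines would have to argue directly that the kernel can be kept a P-ideal containing $\mathcal{I}$ (a Kunen-style argument as in Proposition \ref{prop:ker}), without insisting that the kernel equal $\mathcal{I}$; that is a genuinely harder step which your write-up does not supply.

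For comparison, the paper's proof avoids the measure algebra altogether: it maps $\mathcal{P}(\omega)$ onto the Cohen algebra $\mathbb{C}$ with a P-ideal kernel (as in Proposition \ref{prop:ker}) and uses a strictly positive, non-$\sigma$-additive measure $\lambda'$ on $\mathbb{C}$. There the failure of the P-property is inherited from the failure of $\sigma$-additivity of $\lambda'$ on the target algebra, rather than engineered through a prescribed ideal in the kernel, so no analogue of your auxiliary invariant is needed.
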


\begin{proof}
	Notice that if $\mu$ is a P-measure, then it is $\sigma$-additive on the Boolean algebra $\mathcal{P}(\omega)_{/\mu = 0}$. Consider the Cohen algebra $\mathbb{C} = \text{Bor}(2^{\omega})_{/\text{Meager}}$. It is a complete Boolean algebra. Moreover there exists a strictly positive measure $\lambda'$ on $\mathbb{C}$ and this measure is not $\sigma$-additive (see \cite{Mirna} for more details). Hence we can create a measure $\mu'$ on $\omega$ by transfinite
	induction (like in Proposition \ref{prop:ker}) which is a P$_0$-measure and it is not $\sigma$-additive on $\mathcal{P}(\omega)_{/\mu = 0}$, thus it is not a P-measure. 
\end{proof}

Now we note the relation of notions of P and P$_0$-measures with Rudin-Blass and Rudin-Keisler orderings. Namely, below a P$_0$-measure, the order $\leqslant_{RK}$ collapses to $\leqslant_{RB}$.
\begin{fact}\label{fact:collbelowp}
	If $\mu$ is a P$_0$-measure and $\nu\leqslant_{RK}\mu$, then $\nu\leqslant_{RB} \mu$. Furthermore the properties of being a P-measure and P$_0$-measure are both $RB$ and $RK$-hereditary.
\end{fact}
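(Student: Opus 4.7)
For the first claim, the plan is to take a witness $f \colon \omega \to \omega$ of $\nu \leqslant_{RK} \mu$ and modify it on a $\mu$-null set so that the result becomes finite-to-one. Since $\nu$ is free, every fibre $A_n := f^{-1}[\{n\}]$ satisfies $\mu(A_n) = \nu(\{n\}) = 0$, so the sets $B_k := \omega \setminus \bigcup_{n \le k} A_n$ form a descending sequence of $\mu$-measure-one sets. Applying the P$_0$-property yields a pseudo-intersection $P$ with $\mu(P) = 1$; since $P \subseteq^* B_k$ for every $k$, the set $P \cap (A_0 \cup \dots \cup A_k)$ is finite for every $k$, and hence $P \cap A_n$ is finite for every $n$. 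Thus $f\restr P$ is already finite-to-one. I will then define $g$ to agree with $f$ on $P$ and to be any finite-to-one function on $\omega \setminus P$ (for instance, enumerate $\omega \setminus P = \{x_0, x_1, \dots\}$ and put $g(x_k) := k$). Because $g$ coincides with $f$ on the full-$\mu$-measure set $P$, we will have $g[\mu] = \nu$, witnessing $\nu \leqslant_{RB} \mu$.

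For the hereditarity claim it suffices to treat the RK case, as $\leqslant_{RB}$ is a special instance of $\leqslant_{RK}$. Suppose $\nu = f[\mu]$ with $\mu$ a P-measure (respectively a P$_0$-measure), and let $\langle A_n\rangle_n$ be a descending sequence in $\nu$ (of measure one in the P$_0$ case). Pulling back gives a descending sequence $\langle f^{-1}[A_n]\rangle_n$ in $\mu$ with $\mu(f^{-1}[A_n]) = \nu(A_n)$, so the P-property supplies a pseudo-intersection $Q$ with $\mu(Q) = \lim_n \nu(A_n)$. The natural candidate on the $\nu$-side is $P := f[Q]$. The elementary inclusion
\[ f[Q] \setminus A_n \;\subseteq\; f\bigl[Q \setminus f^{-1}[A_n]\bigr], \]
together with the fact that images of finite sets are finite, shows that $P$ is a pseudo-intersection of $\langle A_n\rangle_n$. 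For the mass, $\nu(P) = \mu(f^{-1}[f[Q]]) \ge \mu(Q) = \lim_n \nu(A_n)$; conversely, the just-established $P \subseteq^* A_n$ combined with $\nu$ vanishing on finite sets forces $\nu(P) \le \nu(A_n)$ for every $n$, so $\nu(P) = \lim_n \nu(A_n)$, as required.

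The only delicate points are the pseudo-intersection/pseudo-selector translation in the first part (decomposing $P \subseteq^* B_k$ along the fibres) and the direct-image inclusion in the second; I do not expect a genuine obstacle, as both reduce to the definitions once the right sequence is written down.
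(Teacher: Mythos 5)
Your proof is correct and follows essentially the same route as the paper's: use the P$_0$-property on the fibres of $f$ to find a full-measure set on which $f$ is finite-to-one and redefine $f$ off it, then obtain hereditarity by pulling a sequence back along $f$, applying the P (resp.\ P$_0$) property, and pushing the resulting pseudo-intersection forward. The only organizational difference is that you prove RK-hereditarity directly (which works, since your argument never uses finite-to-one-ness and you verify the upper bound $\nu(P)\leq\lim_n\nu(A_n)$ explicitly), whereas the paper proves RB-hereditarity via the partition formulation and obtains the RK case from the first claim.
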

\begin{proof}
	To prove the former, take $f$ witnessing $\nu\leqslant_{RK}\mu$. Then, for each $n$, we have $\mu(f^{-1}[\{n\}])=0$. Using the fact that $\mu$ is P$_0$, pick $S$, a full measure pseudo-selector for $\big\langle f^{-1}[\{n\}]\big\rangle_n$. Then we can change values of $f$ on $\omega\setminus S$ making the function finite-to-one, but not changing $f[\mu]$.
	
	Now we prove that the property of being a P-measure (P$_0$-measure) is $RB$-hereditary. 
    Given $f[\mu]$ and a partition (of measure 0 sets) $\langle A_n\rangle_n$ apply the P (respectively P$_0$) property of $\mu$ to $\langle f^{-1}[A_n]\rangle_n$ to obtain $S$. Then $f[S]$ is a desired pseudo-selector as $f[\mu](f[S])\geqslant \mu(S)=1-\sum_n f[\mu](A_n)$.
\end{proof}
Before we move to Q-measures, we give a result on the generic existence of P-measures. This is an analog of a known result \cite{Ketonen1976}, that every filter base of size less than $\mathfrak{c}$ can be extended to a
P-point if and only if $\mathfrak{d}=\mathfrak{c}$.
\begin{thm}\label{d=c}
	$\mathfrak{d} = \mathfrak{c}$ if and only if every measure $\mu$ defined on a Boolean algebra $\mathbb{B}\subseteq \mathcal{P}(\omega)$ such that $|\mathbb{B}| < \mathfrak{c}$, can be extended to a P-measure on $\omega$.
\end{thm}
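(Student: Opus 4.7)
The plan is to prove both directions separately. For the forward implication, assuming $\mathfrak{d}=\mathfrak{c}$, I build an extension of $\mu$ to a P-measure on $\mathcal{P}(\omega)$ by transfinite recursion of length $\mathfrak{c}$. Set $\mu_0=\mu$ on $\mathbb{B}_0 = \mathbb{B}$ (closed under finite modifications WLOG). Using standard bookkeeping, I enumerate all subsets of $\omega$ and all countable descending sequences in $\mathcal{P}(\omega)$, both of cardinality $\mathfrak{c}$. At stage $\alpha+1$, I adjoin the $\alpha$-th enumerated subset $X_\alpha$ to the algebra via Lemma \ref{lem:sikorski-measure}, and handle the $\alpha$-th enumerated descending sequence $\langle A_n\rangle$, which by the bookkeeping may be taken to lie in $\mathbb{B}_\alpha$. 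Setting $r = \lim_n \mu_\alpha(A_n)$, I find a pseudo-intersection $P$ with $\mu_\alpha^*(P) \geq r$ and extend via Lemma \ref{lem:sikorski-measure} by assigning $\mu_{\alpha+1}(P) = r$ (the inequality $(\mu_\alpha)_*(P) \leq r$ is automatic: any $B \in \mathbb{B}_\alpha$ contained in $P$ is almost contained in each $A_n$, so $\mu_\alpha(B) \leq \mu_\alpha(A_n) \to r$). Limit stages take unions. The final measure is a P-measure because every descending sequence in $\bigcup_\alpha \mathbb{B}_\alpha = \mathcal{P}(\omega)$ eventually lies in some $\mathbb{B}_\alpha$ and is processed at a later stage.

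The key combinatorial step is an upper-measure lemma: if $\nu$ is a measure on $\mathbb{A} \subseteq \mathcal{P}(\omega)$, closed under finite modifications with $|\mathbb{A}| < \mathfrak{d}$, and $\langle A_n\rangle \subseteq \mathbb{A}$ is descending with $\lim \nu(A_n) = r$, then there is a pseudo-intersection $P$ of $\langle A_n\rangle$ with $\nu^*(P) \geq r$. Since $\mathbb{A}$ is closed under finite modifications, this amounts to requiring that for every $B \in \mathbb{A}$ with $\nu(B) < r$, the set $P \cap (\omega \setminus B)$ be infinite. For each such $B$, $\nu((\omega\setminus B) \cap A_n) \geq r - \nu(B) > 0$, so writing $C_k = A_k \setminus A_{k+1}$, the set $N_B = \{k : (\omega\setminus B) \cap C_k \neq \emptyset\}$ is infinite. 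Enumerating each $N_B$ in increasing order produces a family of fewer than $\mathfrak{d}$ functions in $\omega^\omega$, which is therefore not dominating; a function $g \in \omega^\omega$ escaping all of them drives a Ketonen-style selection (cf.\ \cite{Ketonen1976}) of a pseudo-intersection $P$ simultaneously meeting each $\omega \setminus B$ infinitely often.

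For the converse, assume $\mathfrak{d} < \mathfrak{c}$ and fix a scale $\langle f_\alpha : \alpha < \mathfrak{d}\rangle$ in $\omega^\omega$. Identify $\omega$ with $\omega \times \omega$ and put $A_n = \{n\}\times\omega$, $B_\alpha = \{(n,k) : k < f_\alpha(n)\}$, so that $B_\alpha \subseteq^* B_\beta$ for $\alpha<\beta$. Let $\mathbb{B}$ be the Boolean algebra generated by $\{A_n\}_{n \in \omega} \cup \{B_\alpha\}_{\alpha < \mathfrak{d}}$ together with the finite sets; it has cardinality $\mathfrak{d} < \mathfrak{c}$. Inductively applying Lemma \ref{lem:sikorski-measure}, one defines a free measure $\mu$ on $\mathbb{B}$ with $\mu(A_n) = 0 = \mu(B_\alpha)$ for all $n,\alpha$ (a routine inspection shows $\mu_* = 0 \leq \mu^*$ at each step, so the extensions are valid). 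Suppose for contradiction that a P-measure $\nu$ extends $\mu$ and consider the descending sequence $F_m = \omega \setminus (B_m \cup A_0 \cup \cdots \cup A_m)$, for which $\nu(F_m) = 1$ and $\lim_m \nu(F_m) = 1$. There must be a pseudo-intersection $P$ with $\nu(P) = 1$; the pseudo-intersection condition forces $P \cap B_m$ and $P \cap A_k$ to be finite for all $m$ and $k$. Hence $g(k) := \max(P \cap A_k) + 1$ (or $0$ if $P \cap A_k = \emptyset$) is a well-defined function in $\omega^\omega$; by the scale, $g \leq^* f_\beta$ for some $\beta < \mathfrak{d}$, which yields $P \subseteq B_\beta \cup \bigcup_{k < K} A_k$ for some $K$. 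Then $\nu(P) \leq \nu(B_\beta) + \sum_{k<K} \nu(A_k) = 0$, contradicting $\nu(P) = 1$. The main obstacle is the upper-measure lemma in the forward direction: while it is modelled on Ketonen's classical P-point extension argument, the measure-theoretic setting forces us to arrange the diagonalization so that $P$ simultaneously satisfies the infinitely many positivity conditions $P \cap (\omega \setminus B)$ infinite, one for each $B \in \mathbb{A}$ with $\nu(B) < r$, which is more delicate than the corresponding ultrafilter case.
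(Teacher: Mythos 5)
Your overall architecture is sound and in fact runs parallel to the paper's: the backward direction is essentially identical to the paper's construction (your $A_n, B_\alpha$ are their $C_n, D_\alpha$, and the argument that any full-measure pseudo-intersection would be trapped under some $B_\beta$ up to finitely many columns is exactly theirs), and the forward direction is the same transfinite recursion whose combinatorial heart is "a family of fewer than $\mathfrak{d}$ functions is not dominating." The only substantive difference is that the paper works with the partition formulation of P-measures (building a pseudo-\emph{selector} $S=\bigcup_n g(n)\cap A^\alpha_n$ of measure $1-\sum_n\mu(A_n^\alpha)$), while you work with descending sequences and pseudo-intersections. These formulations are equivalent, but your choice is exactly where your one genuine gap lives.

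The false step is the claim that $N_B=\{k:(\omega\setminus B)\cap C_k\neq\emptyset\}$ is infinite for every $B$ with $\nu(B)<r$. Positivity of $\nu\bigl((\omega\setminus B)\cap A_n\bigr)$ for all $n$ only tells you these sets are infinite; all of that mass can sit inside the core $\bigcap_n A_n$, which is disjoint from every $C_k$. Concretely, take $A_n=E\cup(O\setminus[0,2n])$ with $E$ the evens and $O$ the odds, and a $\nu$ with $\nu(E)=1$: then $r=1$, $B=O$ has $\nu(B)=0<r$, each $C_k$ is a singleton odd number, and $N_B=\emptyset$. Your functions $h_B$ are then undefined, and worse, any $P$ assembled only from points of the $C_k$'s lies inside $O=B$, so $\nu^*(P)=0$ and the upper-measure lemma fails for that $P$. (This issue cannot arise in the paper's partition version, since a partition of $\omega$ has empty core.) The repair is standard and small: either put $\bigcap_n A_n$ into $P$ outright and run your diagonalization only against those $B$ with $N_B$ infinite (for the others, $(\omega\setminus B)\cap\bigcap_n A_n$ is infinite and you are done), or replace $h_B$ by the always-defined $f_B(n)=\min\bigl((\omega\setminus B)\cap A_n\setminus n\bigr)$ and set $P=\bigcup_n\bigl(A_n\cap[n,g(n))\bigr)$ for a $g$ not dominated by any $f_B$; then $P\subseteq^*A_m$ for each $m$ and $P\cap(\omega\setminus B)$ is infinite for each $B$. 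With that correction the rest of your argument (including the observation that $(\mu_\alpha)_*(P)\le r$ is automatic) goes through.
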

\begin{proof}
	($\Rightarrow$) Let $\mu$ be a measure defined on $\mathbb{B} \subseteq \mathcal{P}(\omega)$ such that $|\mathbb{B}| < \mathfrak{c}$. Let $\{X_\alpha : \alpha \in \mathfrak{c}\} = [\omega]^\omega$ and let $\{ \langle A^\alpha_n\rangle_{n} : \alpha \in \mathfrak{c} \}$ be an enumeration of all partitions of $\omega$, where each one is repeated $\mathfrak{c}$-many times. Recursively, we are going to construct a sequence $\{ \mathbb{B}_\alpha : \alpha \in \mathfrak{c}\}$ of Boolean algebras and a sequence $\{\mu_\alpha : \alpha \in \mathfrak{c}\}$ of measures such that
	\begin{enumerate}
		\item $\mathbb{B}_0 = \mathbb{B}$ and the sequences $\{ \mathbb{B}_\alpha : \alpha \in \mathfrak{c}\}$, $\{\mu_\alpha : \alpha \in \mathfrak{c}\}$ are $\subseteq$-increasing,
		\item $\mu_\alpha$ is a measure on $\mathbb{B}_\alpha$,
		\item $|\mathbb{B}_\alpha| = \max\{|\mathbb{B}|, |\alpha| \}$,
		\item $X_\alpha \in \mathbb{B}_\alpha$,
		\item if $\langle A^\alpha_n\rangle_{n} \subseteq \mathbb{B}_\alpha$, then there is a pseudo-selector $S \in \mathbb{B}_\alpha$ for that partition such that $\mu_\alpha (S) = 1 - \sum_n \mu_\alpha (A^\alpha_n)$.
	\end{enumerate}
	It is clear that if we are able to carry out the construction, then the conclusion follows. Now, assume that the construction can be carried out for all $\beta < \alpha$. Let $\mathbb{B}' = \bigcup_{\beta < \alpha}\mathbb{B}_\beta$ and $\mu' =
	\bigcup_{\beta < \alpha} \mu_\beta$. Then $\mu'$ and $\mathbb{B}'$ satisfies (1), (2) and (3). We can extend them to satisfy (4) by using Lemma \ref{lem:Sikorski_extension}, so we will assume that (4) is also satisfied. Let $\mathcal{B} = \{ b
	\in \mathbb{B}': \exists^\infty n \in \omega (A^\alpha_n \nsubseteq b) \}$. For each $b \in \mathcal{B}$, find a strictly increasing $f_b : \omega \rightarrow \omega$ such that $f_b(0) = 0$, and for every $k \in \omega$,  we have
	\[ b^c \cap (\bigcup\{A^\alpha_i \cap f_b(k+1) : i \in (f_b(k),f_b(k+1)]\})\neq \emptyset.\] Since $|\mathcal{B}| < \mathfrak{d},$ there must be an increasing $g:\omega \rightarrow \omega$ such that, for each $b \in \mathcal{B}$, $g \nleq^* f_b$.
	Let $S = \bigcup_{n \in \omega} g(n) \cap A^\alpha_n$ and note that $S$ is a pseudo-selector of the partition $\langle A^\alpha_n\rangle_{n}$. Observe that, if $b \in \mathcal{B}$, then there are infinitely many $i>0$ such that $g(i) > f_b(i)$. For such $i$ we have that, if $j \in (f_b(i-1),f_b(i)]$, then $j \geq i$, and therefore $g(j) \geq g(i) > f_b(i)$, which means that $S$ avoids infinitely many elements of $b$, so $S \nsubseteq b$. So, for each $b \in \mathbb{B}'$ such that $S \subseteq b$ we have that $b \notin \mathcal{B}$ and therefore there is an $N \in \omega$ such that
   \[ 
    1 = \mu'(\omega) \leq \mu'(b) + \sum_{i <N}\mu'(A^\alpha_i) 
\]
    which means that $\mu'(b) \geq 1 - \sum_{n<N}\mu'(A^\alpha_n)$. As a conclusion we have $\mu'^*(S) = 1 - \sum_n\mu'(A^\alpha_n)$, so Lemma \ref{lem:Sikorski_extension} provides a way to extend $\mu'$ and $\mathbb{B}'$ so all the conditions are satisfied.
	
	($\Leftarrow$) Assume that $\mathfrak{d} < \mathfrak{c}$. Let $\{ f_\alpha : \alpha \in \mathfrak{d}\}$ be a dominating family, and let $D_\alpha = \{ \langle n,m \rangle \in \omega \times \omega : m \leq f_\alpha(n) \}$ and $C_n = \{n\} \times \omega$. Let $\mathbb{B}$ be the Boolean algebra generated by the $D_\alpha$'s and the $C_n$'s and let $\mu$ be a measure defined on $\mathbb{B}$ 
	 by
	$$
	\mu (A) = \begin{cases}
		0 & \text{if }\exists N \in \omega \ (\forall n> N (A \cap C_n \text{ is finite})), \\
		1 & \text{otherwise.}
	\end{cases}
	$$
Let $X_n = (\omega \times \omega)\setminus \bigcup_{i<n} C_i$. Then $\langle X_n \rangle_{n \in \omega}$ is a decreasing sequence of sets of measure 1. However, notice that for every possible pseudo-intersection $P$, there must be an $\alpha$ such that $P \subseteq^* D_\alpha$, and since $\mu(D_\alpha)=0$, then $\mu^*(P) = 0$, so $\mu$ cannot be extended to a P-measure.
\end{proof}

\subsection{Q-measures}\label{sec:Q}

We will first recall the definition of a Q-point. A set $S$ is called a \emph{selector} of a partition $\langle P_n\rangle_n$ if $|S\cap P_n|\leqslant 1$ for each $n$.

\begin{df}\label{df:q_point}
	An ultrafilter $\mathcal{U}$ is a \emph{Q-point} if any partition $\langle P_n\rangle_n$ into finite sets has a selector in $\mathcal{U}$.
\end{df}

The natural extension of this definition for measures seems to be the following. 

\begin{df}
    \label{df:q_measure}
	We say that $\mu$ is a \emph{Q-measure} if any partition $\langle P_n\rangle_n$ into finite sets has a selector $S$ with $\mu(S)=1$.  
\end{df}

As with P$_0$-measures, this definition is a property of the filter of measure $1$ sets.

\begin{prop}\label{prop:q-filter} A measure $\mu$ is a Q-measure if and only if $\{F\subseteq \omega\colon \mu(F)=1\}$ is a Q-filter.
\end{prop}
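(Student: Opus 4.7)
The plan is a direct unwinding of the two definitions. First I would observe that, for any finitely additive probability measure $\mu$, the family $\mathcal{F}_\mu = \{F \subseteq \omega : \mu(F) = 1\}$ really is a filter: it is closed under supersets by monotonicity of $\mu$, and closed under finite intersections because $\mu(A \cap B) = \mu(A) + \mu(B) - \mu(A \cup B) \geq 1 + 1 - 1 = 1$ whenever $\mu(A) = \mu(B) = 1$. This ensures that asking ``$\mathcal{F}_\mu$ is a Q-filter'' is a meaningful condition.

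Then I would treat the two implications in parallel, since each is just a translation. For the forward direction, assume $\mu$ is a Q-measure and let $\langle P_n\rangle_n$ be an arbitrary partition of $\omega$ into finite sets; by Definition \ref{df:q_measure} there is a selector $S$ with $\mu(S) = 1$, hence $S \in \mathcal{F}_\mu$, which exhibits the Q-property of $\mathcal{F}_\mu$. For the converse, assume $\mathcal{F}_\mu$ is a Q-filter and let $\langle P_n\rangle_n$ be a partition into finite sets; the Q-filter property supplies a selector $S \in \mathcal{F}_\mu$, and by the very definition of $\mathcal{F}_\mu$ this means $\mu(S) = 1$, so $\mu$ is a Q-measure.

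There is essentially no obstacle here: the content of the proposition is the conceptual observation that the Q-property of $\mu$ depends only on the filter $\mathcal{F}_\mu$ of full-measure sets (in the same spirit as the analogous remark already made for P$_0$-measures in Section \ref{sec:p_measures}), and the verification is a one-line comparison of quantifiers once one has recorded that $\mathcal{F}_\mu$ is indeed a filter.
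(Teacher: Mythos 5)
Your proof is correct and takes the same route the paper does: the paper states Proposition \ref{prop:q-filter} without proof precisely because it is the immediate translation of Definition \ref{df:q_measure} into a statement about the filter of full-measure sets, which is exactly what you carry out (including the routine check that $\{F : \mu(F)=1\}$ is a filter).
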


On the other hand, the selector $S = \{s_0, s_1, \dots\}$ of full measure has the following property of preserving the measure: let $N \subseteq \omega$ be arbitrary. For $A = \bigcup_{n \in N} A_n$ we have 
    \[
		\mu\big(\{s_n : n \in N\}\big) = \mu(S \cap A) = \mu(A). 
    \]
    We will call such selector a \emph{measure preserving selector}. 

We will list some easy observations. 

\begin{prop} If $\mathcal{U}$ is a Q-point, then $\delta_\mathcal{U}$ is a Q-measure. 
\end{prop}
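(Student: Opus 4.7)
The proof plan is essentially immediate from unfolding definitions. The claim is that if $\mathcal{U}$ is a Q-point, then the Dirac measure $\delta_\mathcal{U}$ is a Q-measure in the sense of Definition \ref{df:q_measure}.

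First I would recall that for the Dirac measure of an ultrafilter, a set $S \subseteq \omega$ satisfies $\delta_\mathcal{U}(S) = 1$ if and only if $S \in \mathcal{U}$, and $\delta_\mathcal{U}(S) = 0$ otherwise. In particular, the filter of measure-$1$ sets for $\delta_\mathcal{U}$ is exactly $\mathcal{U}$ itself. Given this observation, the statement is almost a tautology, and indeed it can be seen as a direct instance of Proposition \ref{prop:q-filter}: a measure is a Q-measure iff its filter of measure-$1$ sets is a Q-filter, and the filter of measure-$1$ sets of $\delta_\mathcal{U}$ is the Q-point $\mathcal{U}$, which is by definition a Q-filter that happens to be an ultrafilter.

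Written out directly without invoking Proposition \ref{prop:q-filter}: let $\langle P_n \rangle_{n \in \omega}$ be an arbitrary partition of $\omega$ into finite sets. Since $\mathcal{U}$ is a Q-point, Definition \ref{df:q_point} provides a selector $S$ for $\langle P_n \rangle_n$ with $S \in \mathcal{U}$. Then $\delta_\mathcal{U}(S) = 1$, so $S$ is a selector of full $\delta_\mathcal{U}$-measure, which is exactly what Definition \ref{df:q_measure} requires.

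There is no real obstacle here; the only thing to be careful about is the bookkeeping of definitions (the filter of measure-$1$ sets for a Dirac measure coincides with the underlying ultrafilter, and selectors are the same combinatorial objects in both the filter and measure settings). The statement should therefore be presented as an observation, serving mainly to motivate the subsequent, less trivial investigation of Q-measures that are not merely Dirac deltas of Q-points.
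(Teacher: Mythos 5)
Your proof is correct and is exactly the definitional unfolding the paper intends: the paper states this as one of its "easy observations" and omits the proof entirely, since the filter of $\delta_\mathcal{U}$-measure-one sets is $\mathcal{U}$ itself and the Q-point property hands you the required full-measure selector directly. Nothing further is needed.
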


Contrary to the case of P-measures, the existence of Q-measures (even non-atomic) implies the existence of Q-points.

\begin{prop}\label{q-m-q-p} The existence of Q-measures imply the existence of Q-points. Therefore, consistently there are no Q-measures.
\end{prop}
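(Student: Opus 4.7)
The plan is to directly extract a Q-point from the filter of measure 1 sets of a Q-measure. The key observation, already recorded in Proposition \ref{prop:q-filter}, is that if $\mu$ is a Q-measure, then the filter $\mathcal{F}=\{F\subseteq\omega\colon \mu(F)=1\}$ is a Q-filter: for every partition $\langle P_n\rangle_n$ of $\omega$ into finite sets, the Q-measure property produces a selector $S$ with $\mu(S)=1$, i.e., $S\in\mathcal{F}$.

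First I would observe that, since $\mu$ is assumed free (in particular $\mu$ vanishes on finite sets), $\mathcal{F}$ contains all cofinite sets and hence any ultrafilter extending $\mathcal{F}$ is free. By Zorn's lemma, pick any ultrafilter $\mathcal{U}\supseteq\mathcal{F}$. Now given an arbitrary partition $\langle P_n\rangle_n$ of $\omega$ into finite sets, apply the Q-measure property of $\mu$ to obtain a selector $S$ with $\mu(S)=1$; then $S\in\mathcal{F}\subseteq\mathcal{U}$, so $\mathcal{U}$ contains a selector of the partition. Since the partition was arbitrary, $\mathcal{U}$ is a Q-point.

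For the second assertion, it is a classical result of Miller that it is consistent with ZFC that there are no Q-points (this holds for instance in the Laver model, or follows from $\mathfrak{d}<\mathrm{cov}(\mathcal{M})$-type hypotheses). Combining this with the implication proved above immediately gives the consistency of the nonexistence of Q-measures.

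The argument is extremely short and there is essentially no obstacle; the only thing to verify carefully is that Definition \ref{df:q_measure} really implies Proposition \ref{prop:q-filter}, i.e., that the filter of measure 1 sets genuinely is a Q-filter, which is immediate from the definitions since a selector $S$ with $\mu(S)=1$ is by definition an element of the filter of full measure sets.
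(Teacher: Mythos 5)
Your argument is correct and coincides with the paper's own proof: both deduce from Proposition \ref{prop:q-filter} that the filter of full-measure sets is a Q-filter, note that any ultrafilter extending a Q-filter is a Q-point, and then cite Miller's classical consistency result for the nonexistence of Q-points. No gaps.
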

\begin{proof}
	The first part follows from Proposition \ref{prop:q-filter} and the obvious fact that every ultrafilter extending a Q-filter is a Q-point. The second follow from the classical results on models without Q-points (see e.g. \cite{Miller}). 
	\end{proof}

In fact, we can prove a slightly stronger fact. Let us introduce a weaker notion of Q$^+$-measures. 
\begin{df}
	We say that $\mu$ is a \emph{Q$^+$-measure} if any partition $\langle P_n\rangle_n$ into finite sets has a selector $S$ with $\mu(S)>0$.  
\end{df}
Notice that a measure is Q$^+$ if and only if the filter of sets of full measure is a Q$^+$ filter. The following theorem was proved independently in \cite{Luis}.

\begin{thm} Consistently, there are no Q$^+$-measures. 
\end{thm}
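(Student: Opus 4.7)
The plan is to work in a model of ZFC in which no rapid filter on $\omega$ exists (say the Laver model, by Miller's classical theorem) and derive a contradiction from the hypothesis that some Q$^+$-measure $\mu$ lives there. The bridge between the two sides is to show that any Q$^+$-measure produces a rapid ultrafilter, which the chosen model forbids.

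First, I would extract rapidity witnesses from the Q$^+$ property. For each strictly increasing $f\in\omega^\omega$, applying Q$^+$ to the interval partition $\big\langle [f(n),f(n+1))\big\rangle_n$ yields a selector $S_f$ with $\mu(S_f)>0$ and $|S_f\cap f(n)|\leq n$, so its enumeration dominates $f$. In particular, the coideal $\{A:\mu(A)>0\}$ of the filter $\F_0=\{A:\mu(A)=1\}$ is full of rapidity witnesses: it contains, for every target growth $f$, a set whose $n$-th element is at least $f(n)$.

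Next, I would assemble these witnesses into a single rapid ultrafilter. Enumerate a dominating family $\langle f_\alpha:\alpha<\mathfrak{c}\rangle$ and recursively extend $\F_0$ (which is non-meager by Proposition \ref{prop:measurefilternonmeager}) to an ultrafilter $\mathcal{U}$ by, at stage $\alpha$, adjoining to the current filter $\F_\alpha$ a selector of the $f_\alpha$-partition that lies in $\F_\alpha^+$. The crux is to arrange that Q$^+$ descends to a \emph{relative} Q$^+$: for each $A\in\F_\alpha$, the normalised restriction $\mu_A(\cdot)=\mu(\cdot\cap A)/\mu(A)$ should remain Q$^+$. Once this is in hand, I can always find a selector $S_\alpha\subseteq A$ of positive measure compatible with the countably many generators of $\F_\alpha$ (via a straightforward fusion).

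The main obstacle is precisely the passage from ``positive $\mu$-measure'' to ``compatible with the current filter'': since both $S$ and $\omega\setminus S$ may have positive measure, the raw selectors $S_f$ from Step~1 need not be jointly consistent, and naively adding them to $\F_0$ collapses the construction. The way around it is to prove a restriction lemma showing Q$^+$ survives passage to $\mu_A$ for any positive-measure $A$; the natural approach is, given a partition of $A$ into finite sets, to extend it to a partition of $\omega$ by chopping $\omega\setminus A$ into finite pieces whose $\mu$-mass is spread thinly enough that any positive-measure selector of the enlarged partition must carry positive mass inside $A$. With the restriction lemma in place the transfinite construction succeeds, the resulting $\mathcal{U}$ is a rapid ultrafilter, and Miller's theorem yields the required contradiction in the Laver model.
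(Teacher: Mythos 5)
There is a genuine gap at the heart of your construction: the ``restriction lemma'' you need is false in general, so the step that assembles the positive-measure selectors into a single rapid ultrafilter cannot be carried out as described. Under CH take a Q-measure $\mu$ (Theorem \ref{thm:qmeasure}) and a density measure $\nu=d_\mathcal{V}$ for some ultrafilter $\mathcal{V}$. A full-$\mu$-measure selector $S$ of the partition $\big\langle[2^n,2^{n+1})\big\rangle_n$ satisfies $|S\cap k|<\log_2(k)$, so $d(S)=0$ and hence $\nu(S)=0$. Put $\lambda=\frac12\mu+\frac12\nu$; this is Q$^+$ by Fact \ref{convex-comb}. But $A=\omega\setminus S$ has $\lambda(A)=\frac12$, and $\lambda\restr_A=\nu$, which is not Q$^+$ (Proposition \ref{qnotdens}). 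So a Q$^+$-measure can have a positive-measure set on which the normalised restriction is not Q$^+$, and no amount of chopping $\omega\setminus A$ thinly can repair this: the obstruction is that all the selector mass of a given partition may live outside $A$. Without the restriction lemma your recursion stalls as soon as a proper positive-measure selector enters the filter (and at limit stages you face the further problem that finite intersections of previously chosen selectors may have measure tending to $0$). In fact it is not established --- neither here nor in \cite{Luis} --- that a Q$^+$-measure yields a rapid filter in ZFC; what it yields directly is a ``rapid'' $\sigma$-bounded-cc ideal, and the Laver-model argument of \cite{Luis} is a direct fusion argument about such ideals that never assembles an ultrafilter.

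The paper's own proof takes a different and much shorter route that sidesteps exactly the compatibility problem you ran into. It assumes Filter Dichotomy: the filter $\F=\{F:\mu(F)=1\}$ is non-meager (Proposition \ref{prop:measurefilternonmeager}), so FD provides a finite-to-one $f$ with $f[\F]=\mathcal{U}$ an ultrafilter. Given any finite-to-one $g$, a positive-measure selector $S$ of $\big\langle(g\circ f)^{-1}(n)\big\rangle_n$ pushes forward to a selector $f[S]$ of $\big\langle g^{-1}(n)\big\rangle_n$, and $f[S]$ must lie in $\mathcal{U}$, since otherwise $f^{-1}[f[S]]\supseteq S$ would be $\mu$-null. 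Thus $\mathcal{U}$ is a Q-point, while Filter Dichotomy implies there are none. The key point is that the positivity of a single selector suffices once one compares against an ultrafilter image of $\F$; no joint compatibility of selectors is ever needed.
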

\begin{proof} Assume Filter Dichotomy and suppose that there is a Q$^+$-measure $\mu$. Let $\mathcal{F} = \{F\colon \mu(F)=1\}$. Then, $\mathcal{F}$ is non-meager (Proposition \ref{prop:measurefilternonmeager}). By Filter Dichotomy, there is an ultrafilter $\mathcal{U} \leq_{RB}
	\mathcal{F}$. We will show that $\mathcal{U}$ is a Q-point. Indeed, let $f$ be the finite-to-one function witnessing $\mathcal{U}\leq_{RB} \mathcal{F}$ and let $g\colon \omega\to \omega$ be any finite-to-one function.  As $\mathcal{F}$ is a
	Q$^+$-filter, there is a selector $S$ for $\big\langle(g\circ f)^{-1}(n)\big\rangle_n$ such that $\mu(S)>0$. Then $S' = f[S]$ is a selector for $\langle g^{-1}(n)\rangle_n$ and $ S' \in \mathcal{U}$. Otherwise $\mu (f^{-1}[S'])=0$ which yields a contradiction as $S\subseteq
	f^{-1}[S']$. But Filter Dichotomy implies that there are no Q-points (see \cite[Corollary 15]{Blass-NCF}).
\end{proof}

Of course, the inevitable question is if it is possible to get a non-atomic Q-measure consistently. Recall that there was a canonical way of getting a non-atomic P-measure from a P-point by extending the asymptotic density, see Theorem \ref{thm:density}. However, in
this way we will never get a Q-measure, or even a Q$^+$-measure, as these cannot be shift invariant (see Proposition \ref{Q-not-shift}).

Below we present an example of a non-atomic Q-measure, which will be obtained by the scheme described in Section \ref{Schemes}.

\begin{lem}\label{lem:extension_partition_selector}
	Let $\mathbb{A}$ be a countable Boolean algebra on $\omega$ (closed under finite modifications), let $\mathbb{B}$ be a complete Boolean algebra, and let $\langle P_n \rangle$ be a partition of $\omega$ into finite sets. Let $\phi \colon
	\mathbb{A} \to \mathbb{B}$ be a homomorphism, such that $\phi(A) = \mathbf{0}$ for any finite $A \in \mathbb{A}$. There exists $S \subseteq \omega$, such that $|S \cap P_n| \leq 1$ for every $n \in \omega$ and 
    \[
		\phi^{*}(S) = \mathbf{1}.  
    \]
	Therefore, \(\phi\) can be extended to a homomorphism \(\phi' \colon \mathbb{A}(S) \to \mathbb{B} \) such that
    \[
        \phi'(S) = \mathbf{1}.  
    \]
\end{lem}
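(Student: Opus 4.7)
The plan is to unpack what $\phi^*(S) = \mathbf{1}$ really asks for and then build $S$ by a straightforward countable enumeration. By definition and De Morgan's law, $\phi^*(S) = \mathbf{1}$ holds if and only if for every $B \in \mathbb{A}$ with $S \cap B = \emptyset$ we have $\phi(B) = \mathbf{0}$; equivalently, $S$ must meet every $B \in \mathbb{A}$ with $\phi(B) \neq \mathbf{0}$. So the combinatorial content of the lemma is: find a partial selector of $\langle P_n\rangle$ that intersects every such $B$.

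The crucial observation is that since $\phi$ vanishes on finite elements of $\mathbb{A}$, any $B \in \mathbb{A}$ with $\phi(B) \neq \mathbf{0}$ is necessarily infinite; because the $P_n$ are finite and partition $\omega$, such a $B$ must intersect infinitely many of them. Using that $\mathbb{A}$ is countable, I would enumerate $\{B_k : k \in \omega\} = \{B \in \mathbb{A} : \phi(B) \neq \mathbf{0}\}$ and construct $S = \{x_k : k \in \omega\}$ recursively: at step $k$, pick an index $n_k$ such that $B_k \cap P_{n_k} \neq \emptyset$ and $n_k \notin \{n_0, \dots, n_{k-1}\}$, which is possible by the preceding observation, and then let $x_k$ be any element of $B_k \cap P_{n_k}$. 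By construction $|S \cap P_n| \leq 1$ for every $n$, and $x_k \in S \cap B_k$ guarantees $\phi^*(S) = \mathbf{1}$.

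For the final clause, I would invoke Lemma \ref{lem:Sikorski_extension} with $C = \mathbf{1}$: the inequality $\phi_*(S) \leq \mathbf{1} = \phi^*(S)$ is automatic, so $\phi$ extends to a homomorphism $\phi'\colon \mathbb{A}(S) \to \mathbb{B}$ with $\phi'(S) = \mathbf{1}$.

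There is essentially no hard step here; the only thing to verify carefully is the recursive choice of $n_k$, which is the place where the hypothesis that $\phi$ kills finite sets is used in tandem with the finiteness of each $P_n$. Every other part (the reformulation of $\phi^*(S) = \mathbf{1}$, the countability of $\mathbb{A}$, and the final application of the Sikorski extension lemma) is routine.
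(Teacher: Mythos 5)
Your proof is correct and is essentially the same argument as the paper's: the paper enumerates the sets $A$ with $\phi(A)\neq\mathbf{1}$ and picks $s_i\in A_i^c$ avoiding the finitely many blocks already used, which is exactly your dual formulation of enumerating the $B$ with $\phi(B)\neq\mathbf{0}$ and picking a point of $B_k$ in a fresh block. The reformulation of $\phi^*(S)=\mathbf{1}$, the use of infiniteness of positive sets against the finiteness of the $P_n$, and the final appeal to Lemma \ref{lem:Sikorski_extension} all match the paper's proof.
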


\begin{proof}
	It suffices to show that there exists $S$, such that $|S \cap P_n| \leq 1$ for every $n \in \omega$ and for any $A \in \mathbb{A}$, if $S \subseteq \mathbb{A}$, then $\phi(A) = \mathbf{1}$. 
    \vspace*{.5em}

	Let $\langle A_n\rangle_{n \in \omega}$ be a sequence of all sets from $\phi^{-1}[\mathbb{B} \setminus \{\mathbf{1}\}]$. Let $f \colon \omega \to \omega$ be a function such that $x \in P_{f(x)}$. 
    Now we will recursively construct $S$. We set 
    \[
        s_i \in A_i^c \setminus \Big( \bigcup_{j < i} P_{f(s_j)} \Big).
    \]
    Note that it is possible as the set on the right-hand side is non-empty. 

    \vspace*{.5em}

	Finally, set $S = \{s_i \colon i \in \omega\}$. It is easy to see that if $ S \subseteq A$, then $\phi(A) = \mathbf{1}$. 
\end{proof}

\begin{thm}\label{thm:qmeasure}
	Assume CH. There exists a non-atomic Q-measure. 
\end{thm}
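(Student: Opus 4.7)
The plan is to mimic the Sikorski-style CH construction of Proposition~\ref{prop:ker}, but to insert a new promise at every stage that produces a full-measure selector for the partition handled at that step. By Proposition~\ref{prop:q-filter}, this suffices to make the resulting measure a Q-measure, while non-atomicity is arranged in the same way as in Proposition~\ref{simplest}, by ensuring the homomorphism is onto $\mathbb{B}$.

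Under CH, enumerate all partitions of $\omega$ into finite sets as $\langle\mathcal{Q}^{\alpha}\rangle_{\alpha<\omega_1}$, the measure algebra as $\mathbb{B}=\{B_\alpha:\alpha<\omega_1\}$, and $\mathcal{P}(\omega)=\{X_\alpha:\alpha<\omega_1\}$. I would recursively build a $\subseteq$-increasing chain of countable Boolean subalgebras $\mathbb{A}_\alpha\subseteq\mathcal{P}(\omega)$ closed under finite modifications, together with homomorphisms $\phi_\alpha\colon\mathbb{A}_\alpha\to\mathbb{B}$ satisfying:
\begin{enumerate}
    \item[(i)] $\phi_\alpha(F)=\mathbf{0}$ for every finite $F\in\mathbb{A}_\alpha$;
    \item[(ii)] some selector $S_\alpha$ of $\mathcal{Q}^{\alpha}$ belongs to $\mathbb{A}_{\alpha+1}$ with $\phi_{\alpha+1}(S_\alpha)=\mathbf{1}$;
    \item[(iii)] $B_\alpha\in\mathrm{ran}(\phi_{\alpha+1})$;
    \item[(iv)] $X_\alpha\in\mathbb{A}_{\alpha+1}$.
\end{enumerate}
At a successor stage $\alpha+1$, I would first apply Lemma~\ref{lem:extension_partition_selector} (whose hypothesis is exactly~(i)) to produce $S_\alpha$ and extend with value $\mathbf{1}$, then use Lemma~\ref{lem:Sikorski_extension} twice more: once to insert $X_\alpha$ into the domain with any value in the admissible interval, and once to introduce a preimage $Y_\alpha$ of $B_\alpha$. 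For the latter, the existence of a suitable $Y_\alpha$ squeezed between the countable family of $\subseteq^{*}$-lower witnesses $\{Z:\phi_\alpha(Z)\leq B_\alpha\}$ and the countable family of upper witnesses $\{Z:B_\alpha\leq\phi_\alpha(Z)\}$ boils down to the absence of $(\omega,\omega)$-gaps, exactly as in the argument sketched after Proposition~\ref{simplest}. At limit stages take unions.

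The resulting $\phi=\bigcup_{\alpha<\omega_1}\phi_\alpha\colon\mathcal{P}(\omega)\to\mathbb{B}$ is a surjective homomorphism vanishing on finite sets, so $\mu(A):=\lambda(\phi(A))$ is a free finitely additive probability measure on $\omega$. Non-atomicity follows from surjectivity: for any $\varepsilon>0$ pull back a finite $\lambda$-partition of $\mathbf{1}$ into pieces of measure $<\varepsilon$ via chosen $\phi$-preimages. Given any partition $\mathcal{Q}$ of $\omega$ into finite sets, clause~(ii) at the stage $\alpha$ with $\mathcal{Q}^{\alpha}=\mathcal{Q}$ yields a selector $S_\alpha$ with $\mu(S_\alpha)=\lambda(\mathbf{1})=1$, so $\mu$ is a Q-measure. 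The main delicate point will be verifying that clause~(i) is genuinely preserved through each of the three extension steps at every stage; but since $\mathbb{A}_0$ contains the finite--cofinite algebra and $\phi_0$ vanishes on all of its finite elements, one has $\phi_{*}(F)=\phi^{*}(F)=\mathbf{0}$ for every finite $F$ at every subsequent stage, so every homomorphism extension is forced to assign finite sets the value $\mathbf{0}$ and (i) propagates automatically.
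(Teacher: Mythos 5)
Your proposal is correct and follows essentially the same route as the paper: a CH transfinite construction of a homomorphism $\phi\colon\mathcal{P}(\omega)\to\mathbb{B}$, alternating between Lemma~\ref{lem:extension_partition_selector} to secure a selector of value $\mathbf{1}$ for each partition and Lemma~\ref{lem:Sikorski_extension} (via the absence of $(\omega,\omega)$-gaps) to secure surjectivity, then setting $\mu=\lambda\circ\phi$. Your extra bookkeeping clauses (inserting each $X_\alpha$ into the domain, checking that vanishing on finite sets propagates) are details the paper leaves implicit, and they are handled correctly.
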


\begin{proof}
	We proceed as in the schema presented in Section \ref{Schemes}. Namely, we construct a homomorphism $\phi
	\colon \mathcal{P}(\omega) \to \mathbb{B}$ which fulfills the following conditions:
    
    \begin{itemize}
		\item[(1)] for every $B \in \mathbb{B}$ there is $X \subseteq \omega$ such that $\phi(X) = B$,
		\item[(2)] for every $\langle P_n\rangle_{n \in \omega}$, a partition of $\omega$ into finite sets, there exists a selector $S$ of this partition, such that $\phi(S) = \mathbf{1}$.  
    \end{itemize}

    In this way we can obtain a measure $\mu$ defined as $\mu(X) = \lambda(\phi(X))$ for every $X \subseteq \omega$. It follows from (1) that $\mu$ is non-atomic and from (2) that $\mu$ is a Q-measure. 
    \vspace*{.5em}

	Let $(\mathcal{P}_\alpha)_{\alpha < \omega_1}$ be a sequence of all partitions of $\omega$ into finite sets. For the steps of the form $\gamma + 2k$, $\gamma$  - limit (or $0$), $k\in \omega$, we can use Lemma \ref{lem:extension_partition_selector} to
	find a selector $S$ of partition $\mathcal{P}_{\gamma + k}$ such that  $\phi^{*}_{\gamma + 2k}(S) = \mathbf{1}$ and set
	$\phi_{\gamma + 2k + 1}(S) = \mathbf{1}$. For the steps of the form $\gamma+2k+1$, we can take care of condition (1), in the way described in Section \ref{sec:transfinite}.

    Then $\phi = \bigcup_{\alpha < \omega_1} \phi_{\alpha}$ satisfies (1) and (2). 
\end{proof}

Recall that an ultrafilter is a Q-point if and only if it is RB-minimal. We will show that partially we have an analogous result for measures. 

\begin{prop}
    \label{thm:q_measures_minimal_rb}
	If $\mu$ is a Q-measure and $\nu \leq_{RB} \mu$, then $\mu \leq_{RB} \nu$. 
\end{prop}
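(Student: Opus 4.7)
The plan is to use the Q-measure property on the partition given by the fibers of the reducing function. Fix a finite-to-one $f\colon \omega\to\omega$ with $f[\mu]=\nu$, and for each $m\in\omega$ set $P_m=f^{-1}(m)$. Since $f$ is finite-to-one each $P_m$ is finite, and since $f$ must have infinite range (the domain is infinite), after discarding empty fibers we get a partition of $\omega$ into finite sets, indexed by an infinite set; apply the Q-measure hypothesis to this partition to obtain a selector $S$ with $\mu(S)=1$.

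Now I would define $g\colon\omega\to\omega$ essentially as the inverse selection map: for each $m$ such that $S\cap P_m=\{s_m\}$, set $g(m)=s_m$. Since the $P_m$ are pairwise disjoint, the assignment $m\mapsto s_m$ is injective on the set $M=\{m:S\cap P_m\neq\emptyset\}$. Let $N=\omega\setminus M$; the key observation is that $f^{-1}(N)=\bigcup_{m\in N}P_m\subseteq\omega\setminus S$, hence $\nu(N)=\mu(f^{-1}(N))=0$. Since $\nu$ vanishes on $N$ we have total freedom in defining $g$ on $N$, so I would fix any bijection between $N$ and some cofinite subset of $\omega$ disjoint from $S$ (or just enumerate $N$ and send it to $\omega$ injectively); combined with the already-injective assignment on $M$, this makes $g$ at worst $2$-to-$1$, hence finite-to-one.

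The final step is to check $g[\nu]=\mu$. The crucial identity is that $(g\circ f)\restriction S=\mathrm{id}\restriction S$: indeed, for $x\in S$ we have $x\in P_{f(x)}$ and by the selector property $S\cap P_{f(x)}=\{x\}$, so $f(x)\in M$ and $g(f(x))=s_{f(x)}=x$. Given any $A\subseteq\omega$, this yields $(g\circ f)^{-1}(A)\cap S=A\cap S$, so using $\mu(S)=1$ we get
\[
\nu(g^{-1}(A))=\mu(f^{-1}(g^{-1}(A)))=\mu((g\circ f)^{-1}(A))=\mu(A\cap S)=\mu(A),
\]
which is exactly $g[\nu]=\mu$, witnessing $\mu\leq_{RB}\nu$.

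I do not expect any real obstacle: the only thing to be slightly careful about is ensuring $g$ is \emph{finite-to-one} rather than merely well-defined, which is why the handling of the measure-zero set $N$ matters, and verifying that the Q-measure property indeed applies (which uses the fact that $f$ is finite-to-one so the fibers really are finite).
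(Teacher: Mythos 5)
Your proof is correct and follows essentially the same route as the paper: apply the Q-measure property to the fiber partition $\langle f^{-1}(m)\rangle_m$, take a full-measure selector $S$, and let $g$ send each index back to its selected point, so that $g\circ f$ is the identity on $S$ and the measure computation goes through. (The parenthetical about a cofinite set disjoint from $S$ is impossible since $S$ is infinite, but your alternative of sending $N$ injectively anywhere is all that is needed, and the final verification does not depend on where $g$ maps $N$.)
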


\begin{proof}
	Let $f \colon \omega \to \omega$ be a finite-to-1 function witnessing $\nu \leq_{RB} \mu$. 
    Denote $A = f[\omega] = \{ a_1, a_2, \ldots \}$. 
    Then $\nu(A) = \mu(f^{-1}[A]) = \mu(\omega) = 1$. 

    For $i \in \omega$ let $A_i = f^{-1}[\{ a_i \}]$. 
	Obviously, $\langle A_i\rangle_i$ is a partition of $\omega$ into finite sets and $\mu(A_i) = \nu(\{ a_i \}) = 0$ for every $i$. 

	Let $S = \{s_i\}_{i \in \omega}$ be a measure preserving selector of $\langle A_i\rangle_i$ promised by the fact that $\mu$ is a Q-measure.
    Define $g_0 \colon A \to S$ by $g_0(a_i) = s_i$. Clearly, $g_0$ is a bijection with $f_{\upharpoonright S}$ being its inverse and can be extended to a finite-to-1 function $g \colon \omega \to \omega$, such that $g[\omega] = S$. 

    \vspace*{.5em}

	Finally, we show that $g$ witnesses that $\mu \leq_{RB} \nu$. 
    Let $X \subseteq \omega$. Then we have 
    \begin{align*}
        \nu(g^{-1}[X]) 
        &= \nu(g^{-1}[X \cap S]) 
        = \nu((g^{-1}[X \cap S] \cap A) \cup (g^{-1}[X \cap S] \cap A^c)) \\
        &= \nu(g^{-1}[X \cap S] \cap A) + \nu(g^{-1}[X \cap S] \cap A^c) \\
        &= \nu(g^{-1}[X \cap S] \cap A)
        = \nu(g_0^{-1}[X \cap S]) \\
        &= \nu(f[X \cap S])
        = \mu(f^{-1}[f[X \cap S]])
        = \mu((X \cap S) \cap S) \\
        &= \mu(X). 
    \end{align*}
\end{proof}

In the next section we will show that the above implication, in general, cannot be reversed.

We overview the main results about Q-measures contained in \cite{Luis}.

\begin{thm}(\cite{Luis}) There is no Q$^+$-measure in the Laver's model for no Q-points.
\end{thm}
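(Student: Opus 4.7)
My plan is to argue by contradiction. Suppose $\mu$ is a Q$^+$-measure in the Laver model $V[G]$. Let $\mathcal{F} = \{F\subseteq \omega \colon \mu(F) = 1\}$; by Proposition \ref{prop:measurefilternonmeager} this filter is non-meager, and because $\mu$ is a Q$^+$-measure, every partition of $\omega$ into finite sets admits a selector which is not in the dual ideal of $\mathcal{F}$. Hence the task reduces to showing that there is no non-meager Q$^+$-filter in the Laver model.

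The next step is a reflection argument inside the iteration. The Laver model is a countable-support iteration $\langle \mathbb{P}_\alpha, \dot{\mathbb{Q}}_\alpha \rangle_{\alpha < \omega_2}$ of Laver forcing over a model of CH, so it is proper and every real of $V[G]$ appears in some $V[G_\alpha]$ with $\alpha < \omega_2$. I would use a club/counting argument on $\omega_2$ to locate a limit $\alpha$ of cofinality $\omega_1$ such that the trace $\mathcal{F}_\alpha = \mathcal{F} \cap V[G_\alpha]$ is itself a non-meager Q$^+$-filter of $V[G_\alpha]$, with every interval partition from $V[G_\alpha]$ already reflecting a $V[G_\alpha]$-selector compatible with $\mathcal{F}$.

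The key step --- where the specific nature of Laver's forcing enters --- is to exhibit, using the Laver real $\ell$ added at stage $\alpha$, a partition $\langle P_n\rangle_n$ for which no selector is $\mathcal{F}$-compatible. The natural candidate is $P_n = [\ell(n), \ell(n+1))$. Since $\ell$ dominates $V[G_\alpha]$, for any infinite $X \in V[G_\alpha]$ the intersections $X\cap P_n$ grow unboundedly, so any selector picks only a sparse subset of $X$. The classical argument showing that Laver forcing destroys Q-points then upgrades this to: for every selector $S \in V[G_\alpha]$ of $\langle P_n\rangle_n$ there is $F \in \mathcal{F}_\alpha$ with $S \cap F$ finite, contradicting the Q$^+$ property of $\mathcal{F}_\alpha$.

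The main obstacle is that $\mu$ lives in $V[G]$, so new positive-measure selectors for $\langle P_n\rangle_n$ may \emph{a priori} be added by the tail of the iteration past stage $\alpha$. What must be arranged is a preservation theorem asserting that once the stage-$\alpha$ Laver real has eliminated the $V[G_\alpha]$-selectors compatible with $\mathcal{F}_\alpha$, the tail forcing cannot reintroduce any --- not even through a selector of positive measure under an as-yet-unbuilt $\mu$. Carrying out this preservation, likely through properness plus fusion tailored to Laver forcing, is what I expect to be the technical heart of the proof in \cite{Luis}.
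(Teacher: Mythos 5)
The paper does not actually prove this theorem: it is quoted from \cite{Luis}, and the only guidance the paper gives is the remark that the authors of \cite{Luis} in fact prove the stronger statement that no $\sigma$-bounded-cc ideal is rapid in the Laver model; the theorem then follows because the null ideal $\mathcal{N}_\mu=\{A\colon \mu(A)=0\}$ of a measure has a quotient carrying a strictly positive measure (hence is $\sigma$-bounded-cc), and $\mu$ being Q$^+$ is exactly a rapidity property of $\mathcal{N}_\mu$. Measured against that, your first move already goes in the wrong direction: reducing to ``there is no non-meager Q$^+$-filter in the Laver model'' replaces the claim by a strictly stronger one (not every non-meager Q$^+$-filter is the measure-one filter of a measure), and it throws away precisely the structure the known argument needs, namely the bounded chain condition of $\mathcal{P}(\omega)/\mathcal{N}_\mu$. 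Notice that the non-meagerness you import from Proposition \ref{prop:measurefilternonmeager} is never used again in your sketch, which is a sign that the reduction is not doing work.

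The more serious gap is in the ``key step'' and its aftermath. Domination alone does not kill selectors: if $\ell$ dominates $V[G_\alpha]$ and $P_n=[\ell(n),\ell(n+1))$, then a selector $S$ merely satisfies $|S\cap \ell(n)|\le n$, i.e.\ $S$ witnesses rapidity --- this is the \emph{definition} of what a Q$^+$-measure is supposed to produce, not an obstruction to it. To rule out a positive-measure selector (including selectors added by the tail of the iteration, as you correctly worry) one needs the actual content of Miller's theorem and its generalization in \cite{Luis}: a fusion/rank analysis on Laver conditions showing that a name for a thin set can be captured by a ground-model object, combined with the $\sigma$-bounded-cc of the quotient to force its measure to be $0$, and a preservation argument for the countable-support iteration. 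You explicitly defer all of this (``the classical argument \ldots then upgrades this'', ``carrying out this preservation \ldots is what I expect to be the technical heart''). Since those deferred steps are the entire mathematical substance of the theorem, the proposal is a plausible outline of where a proof might live, but it is not a proof, and as written its first reduction would have to be undone before the real argument could even be started.
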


In fact the authors proved a more general result saying that no $\sigma$-bounded-cc ideal is rapid in the above model.

Also, the authors prove a result analogous to the well-known fact for ultrafilters saying that if $\mathfrak{d} = \mathrm{cov}(\mathcal{M})$, then every filter generated by less than $\mathfrak{d}$ many sets can be extended to a Q-point. This generalizes Theorem \ref{thm:qmeasure}.

\begin{thm}(\cite{Luis})  The assumption $\mathfrak{d} = \mathrm{cov}(\mathcal{M})$ is equivalent to the statement: every non-atomic measure defined on a Boolean algebra of size less than $\mathfrak{d}$ many sets can be extended to a non atomic Q-measure.
\end{thm}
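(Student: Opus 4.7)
The plan is to mirror the CH-based transfinite construction of Theorem \ref{thm:qmeasure}, replacing CH with the hypothesis $\mathfrak{d} = \mathrm{cov}(\mathcal{M})$, and for the converse to adapt the dominating-family construction from the reverse half of Theorem \ref{d=c}. The central step for the forward direction is a selector-extension lemma: if $\mu$ is a non-atomic measure on a Boolean algebra $\mathbb{A}\subseteq\mathcal{P}(\omega)$ (closed under finite modifications) with $|\mathbb{A}|<\mathfrak{d}$, and $\langle P_n\rangle_n$ is any partition of $\omega$ into finite sets, then there exists a selector $S$ of $\langle P_n\rangle_n$ with $\mu^*(S)=1$. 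Once this is in hand, Lemma \ref{lem:sikorski-measure} extends $\mu$ to $\mathbb{A}(S)$ with $\mu(S)=1$.

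To prove the key lemma I would work in the compact metric space $Y=\prod_n(P_n\cup\{\ast\})$, whose points $y$ encode partial selectors. For each $b\in\mathbb{A}$ with $\mu(b)<1$, the set
\[ U_b = \{\,y\in Y : \exists n\ y(n)\in P_n\setminus b\,\} \]
is open, and is dense because $b^c$ is infinite and therefore meets $P_n$ for infinitely many $n$, so any basic open condition can be extended. Adjoining the dense open sets $V_k=\{\,y\in Y:\exists n\geq k\ y(n)\neq\ast\,\}$ (which force the selector to be infinite) one obtains strictly fewer than $\mathfrak{d}=\mathrm{cov}(\mathcal{M})$ dense open subsets of a compact Polish space, whose intersection is nonempty by the Baire category characterisation of $\mathrm{cov}(\mathcal{M})$. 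Any point in this intersection produces the required selector $S=\{\,y(n):y(n)\neq\ast\,\}$ with $\mu^*(S)=1$. The global construction then runs a transfinite induction alternating between applications of this lemma (for given partitions) and Lemma \ref{lem:sikorski-measure} (for given subsets of $\omega$), preserving non-atomicity as in Example \ref{exa:generic_measure}.

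For the converse, since $\mathrm{cov}(\mathcal{M})\leq\mathfrak{d}$ in ZFC, it suffices to derive a counterexample under $\mathrm{cov}(\mathcal{M})<\mathfrak{d}$. Following the reverse direction of Theorem \ref{d=c}, I would work on $\omega\times\omega$ with columns $C_n=\{n\}\times\omega$ and with graphs $D_\alpha=\{(n,m):m\leq f_\alpha(n)\}$ for a non-dominating family $\{f_\alpha:\alpha<\mathrm{cov}(\mathcal{M})\}$, then place on the generated algebra the measure that assigns $0$ to sets essentially contained in finitely many columns and $1$ to their complements. The failure of $\mathrm{cov}(\mathcal{M})$-many dense open sets to suffice in the relevant Polish space forces, via non-domination, that any candidate selector of the natural finite-set refinement of the column partition is almost contained in some $D_\alpha$ of measure zero.

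The main obstacle is controlling $|\mathbb{A}_\alpha|$ during the forward iteration: the selector-extension lemma needs $|\mathbb{A}_\alpha|<\mathfrak{d}$ at every stage, yet to reach a Q-measure defined on all of $\mathcal{P}(\omega)$ one must eventually handle $\mathfrak{c}$-many subsets and partitions, and if $\mathfrak{d}<\mathfrak{c}$ the algebra will outgrow $\mathfrak{d}$. Resolving this appears to require either the tacit assumption $\mathfrak{c}=\mathfrak{d}=\mathrm{cov}(\mathcal{M})$, or a reduction of the Q-measure property to selectors for a cofinal $\mathfrak{d}$-sized family of partitions, with the remaining subsets added freely via Lemma \ref{lem:sikorski-measure} in the later stages. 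Identifying such a cofinal family --- so that handling it genuinely witnesses the Q-property for \emph{every} partition in $\mathcal{P}(\omega)$ --- is the delicate combinatorial point where the equality $\mathfrak{d}=\mathrm{cov}(\mathcal{M})$ must do its real work.
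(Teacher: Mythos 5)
First, a caveat: the paper does not prove this statement --- it is quoted from \cite{Luis} without proof --- so there is no in-paper argument to compare against and I am judging your proposal on its own merits. Your forward direction starts from the right mechanism: the selector-extension lemma via Baire category in $\prod_n(P_n\cup\{\ast\})$ is essentially the same device as the paper's own proof of Theorem \ref{covM=c}, only phrased with dense open sets instead of the infinitely-often-equal characterization of $\mathrm{cov}(\mathcal{M})$. (Small fix: since the algebras are closed under finite modifications, $\mu^*(S)=1$ requires $S\setminus b$ to be \emph{infinite} for every $b$ with $\mu(b)<1$, so you need the dense sets $U_b^k=\{y:\exists n\geq k\ y(n)\in P_n\setminus b\}$, not just $U_b$; this costs only a factor of $\aleph_0$.) The genuine gap is the one you flag yourself and then leave open: if $\mathfrak{d}<\mathfrak{c}$, a $\mathfrak{c}$-length induction over all partitions lets the algebra outgrow $\mathfrak{d}$, and retreating to the ``tacit assumption $\mathfrak{c}=\mathfrak{d}=\mathrm{cov}(\mathcal{M})$'' proves a strictly weaker theorem. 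The missing idea is the standard reduction to a dominating family: for a dominating family $\{f_\alpha:\alpha<\mathfrak{d}\}$ of iterated interval partitions $\mathcal{I}_\alpha$, every partition $\langle P_n\rangle_n$ into finite sets has, for some $\alpha$, each $P_n$ contained in two consecutive blocks of $\mathcal{I}_\alpha$; if one secures measure-one selectors for \emph{both} parity-merged partitions $\{I_{2m}\cup I_{2m+1}\}$ and $\{I_{2m+1}\cup I_{2m+2}\}$, their intersection is a measure-one selector of $\langle P_n\rangle_n$. So a $\mathfrak{d}$-length induction over these $\mathfrak{d}$ many partitions keeps the algebra of size $<\mathfrak{d}=\mathrm{cov}(\mathcal{M})$ at every stage, and at the end one extends \emph{arbitrarily} to $\mathcal{P}(\omega)$: the Q-property and non-atomicity both persist under arbitrary extension, being witnessed by measure-one sets and finite partitions already inside the small algebra. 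Note also that the equality $\mathfrak{d}=\mathrm{cov}(\mathcal{M})$ does its work in the selector lemma, not in producing the cofinal family of partitions, which needs only $\mathfrak{d}$.

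The converse as sketched does not work. You transplanted the reverse half of Theorem \ref{d=c}, but that counterexample (infinite columns $C_n=\{n\}\times\omega$, domination, a $\{0,1\}$-valued measure) is tailored to P-measures: the Q-property only concerns partitions into \emph{finite} sets, so the column partition is irrelevant, ``non-domination'' is not the operative combinatorial property, and --- decisively --- a two-valued measure on the generated algebra is not non-atomic, so it cannot instantiate the hypothesis ``every \emph{non-atomic} measure on a small algebra extends\dots''. The correct template is the converse of Theorem \ref{covM=c}: assuming the extension property, show that fewer than $\mathfrak{d}$ meager sets cannot cover $2^\omega$. Present each meager set $M_\alpha$ via a chopped real $(\mathcal{I}^\alpha,y_\alpha)$, use $\kappa<\mathfrak{d}$ to find one interval partition $\mathcal{J}=\langle J_m\rangle_m$ such that for each $\alpha$ infinitely many $J_m$ contain a block of $\mathcal{I}^\alpha$, work on $\bigcup_m 2^{J_m}$ (a partition into \emph{finite} sets), let $E_\alpha$ be the set of patterns agreeing with $y_\alpha$ on some block of $\mathcal{I}^\alpha$, put the filter measure on the algebra generated by the $E_\alpha$ together with a countable independent family to make it non-atomic, extend to a non-atomic Q-measure, and read off from a measure-one selector a real outside every $M_\alpha$. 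Without these repairs the reverse implication is not established.
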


\subsection{Selective measures}\label{sec:selective}

Recall the definition of selective ultrafilter.

\begin{df}\label{def:selective_filter}
A filter $\mathcal{F}$ is \emph{selective} if for every $\langle A_n\rangle_n$, a 
    partition of $\omega$, such that $A_n^c \in \mathcal{F}$ for each $n$, there exists a selector $S$ of this partition, such that $S \in \mathcal{F}$.
\end{df}

Again, it seems that the natural generalization for measures is the following:

\begin{df}
    \label{def:selective_measure}
    A measure $\mu$ is \emph{selective} if any partition $\langle P_n\rangle_n$ has a selector $S$ such that $\mu(S) = 1 - \sum_n \mu(A_n)$. 
\end{df}

Note that the selector $S$ from the definition above is a measure preserving selector. 

It is a standard fact that the selective ultrafilters are exactly those which are P-points and Q-points. The analogous fact holds for measures.

\begin{prop}
    \label{lem:selective_p_q}
    A measure $\mu$ is selective if and only if it is a P-measure and a Q-measure. 
\end{prop}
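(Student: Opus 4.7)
The forward direction is essentially immediate. If $\mu$ is selective, then every selector is in particular a pseudo-selector, so the P-measure property is witnessed by the same set. For the Q-measure property, let $\langle P_n\rangle_n$ be a partition into finite sets; since $\mu$ vanishes on points (hence on finite sets), $\sum_n \mu(P_n) = 0$, and applying selectivity yields a selector $S$ with $\mu(S) = 1 - 0 = 1$.

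The backward direction is the content of the lemma. Let $\mu$ be both a P-measure and a Q-measure, and let $\langle A_n\rangle_n$ be an arbitrary partition of $\omega$. First apply the P-measure property to obtain a pseudo-selector $S$ with
\[
    \mu(S) \;=\; 1 - \sum_n \mu(A_n).
\]
Set $B_n = S \cap A_n$; by construction each $B_n$ is finite, and $\langle B_n\rangle_n$ partitions $S$. Now extend this to a partition of $\omega$ into finite sets by adjoining the singletons $\{\{x\} : x \in \omega \setminus S\}$ (throwing away any empty $B_n$ is harmless). Apply the Q-measure property to this refined partition to obtain a selector $T$ with $\mu(T) = 1$.

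It remains to verify that $T \cap S$ is the desired selector of $\langle A_n\rangle_n$. For each $n$ we have $(T \cap S) \cap A_n = T \cap B_n$, and $|T \cap B_n| \leq 1$ because $T$ selects at most one point from each block of the refined partition. For the measure, since $\mu(T) = 1$ we have $\mu(\omega \setminus T) = 0$, hence $\mu(S \setminus T) = 0$, and so
\[
    \mu(T \cap S) \;=\; \mu(S) \;=\; 1 - \sum_n \mu(A_n),
\]
which is exactly the selectivity condition. I do not foresee a serious obstacle here: the only subtle point is recognizing that the Q-measure property should be applied not directly to $\langle A_n\rangle_n$ (which need not consist of finite sets) but to the refinement of $\langle B_n\rangle_n$ by singletons outside $S$, and that the resulting selector $T$ can then be intersected with $S$ without losing measure because $T$ has full measure.
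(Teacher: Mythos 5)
Your proof is correct and follows essentially the same route as the paper: apply the P-property to get a pseudo-selector $S$ of the right measure, refine $\langle S\cap A_n\rangle_n$ by singletons outside $S$ into a partition into finite sets, apply the Q-property to get a full-measure selector $T$, and take $S\cap T$. The extra verifications you include (that $|T\cap B_n|\leq 1$ and that intersecting with a full-measure set preserves $\mu(S)$) are exactly the details the paper leaves implicit.
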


\begin{proof}
    $(\Longrightarrow)$. Follows directly from the definitions. 

    $(\Longleftarrow)$. 
    Let $\langle A_n\rangle_n$ be a partition of $\omega$. 
    Let $S'$ be a pseudo-selector of this partition such that $\mu(S') = 1 - \sum_n \mu(A_n)$. 
    Consider the family $\langle S' \cap A_n\rangle_n$. 
    It can be easily extended to a partition of $\omega$ into finite sets (e.g. by adding the singletons of all the elements of $\omega \setminus \bigcup_{n} (S' \cap A_n)$). 
    Let $S_0$ be a selector of this partition such that $\mu(S_0) = 1$. 
    Finally, we obtain $S = S' \cap S_0$, a selector of $\langle A_n\rangle_n$, such that $\mu(S) = \mu(S') = 1 - \sum_n \mu(A_n)$. 
\end{proof}

Analogously to P$_0$-filters, we have the following notion.
\begin{df}
    \label{def:0selective_measure}
    A measure $\mu$ is \emph{0-selective} if for every $\langle A_n\rangle_n$, a partition of $\omega$ into sets of measure $0$, there exists a selector $S$ of this partition, such that $\mu(S) = 1$. 
\end{df}

 Clearly a measure is 0-selective if and only if it is Q and P$_0$ and all selective measures are 0-selective. We obtain the following theorem as a direct corollary of Proposition \ref{thm:q_measures_minimal_rb} and Proposition \ref{fact:collbelowp}.

\begin{thm}
	If $\mu$ is a 0-selective measure and $\nu \leq_{RK}  \mu$, then $\mu \leq_{RK} \nu$. 
\end{thm}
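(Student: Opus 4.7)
The plan is to chain the two cited results together after decomposing 0-selectivity. By the remark immediately preceding the statement, a 0-selective measure is, by definition, simultaneously a Q-measure and a P$_0$-measure, so both Proposition \ref{thm:q_measures_minimal_rb} and Proposition \ref{fact:collbelowp} are available to $\mu$.

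Starting from the hypothesis $\nu \leq_{RK} \mu$, I would first invoke Proposition \ref{fact:collbelowp}, which states that the order $\leq_{RK}$ collapses to $\leq_{RB}$ below any P$_0$-measure. Using that $\mu$ is P$_0$, this upgrades the reduction to $\nu \leq_{RB} \mu$. Next, since $\mu$ is a Q-measure, Proposition \ref{thm:q_measures_minimal_rb} applies and yields $\mu \leq_{RB} \nu$. Finally, since every finite-to-one function is in particular a function, $\leq_{RB}$ is a strengthening of $\leq_{RK}$; concretely, the witness $f$ of $\mu \leq_{RB} \nu$ serves equally well as a witness of $\mu \leq_{RK} \nu$. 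This gives the desired conclusion.

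There is no real obstacle here: the whole content of the theorem is the observation that the Q-part of 0-selectivity provides RB-minimality, and the P$_0$-part lets one import RK-reductions into this RB framework. The only bookkeeping is to note explicitly the implication $\leq_{RB}\,\subseteq\,\leq_{RK}$ at the end, and to verify that the two previous propositions indeed give the correct direction of the reductions so that the composition goes through.
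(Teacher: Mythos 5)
Your proof is correct and follows exactly the paper's route: the paper presents this theorem as "a direct corollary of Proposition \ref{thm:q_measures_minimal_rb} and Proposition \ref{fact:collbelowp}," using precisely the decomposition of 0-selectivity into the Q and P$_0$ properties, the collapse of $\leq_{RK}$ to $\leq_{RB}$ below a P$_0$-measure, the RB-minimality of Q-measures, and the inclusion $\leq_{RB}\,\subseteq\,\leq_{RK}$. Nothing to add.
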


Note that a selective measure can be constructed under CH in the similar way to Q-measure (see Theorem \ref{thm:qmeasure}). Below we present some forcing constructions.

We  consider the forcing $\mathbb{P}$ from Example \ref{exa:generic_measure}:
	\[ \mathbb{P} = \{\mu\colon \mu\mbox{ is a free measure on a countable Boolean algebra }\mathbb{A}\subseteq \mathcal{P}(\omega)\} \]
and $\mu\leq \nu$ if $\mu \supseteq \nu$.
\begin{prop}\label{prop:generic_selective} Let $\dot{\mu}$ be the generic measure added by $\mathbb{P}$. Then
	 \[ \forces{ \dot{\mu} \mbox{ is selective.}}{\mathbb{P}}\]
\end{prop}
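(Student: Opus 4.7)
My plan is a straightforward density argument in $\mathbb{P}$. Since $\mathbb{P}$ is $\sigma$-closed (Example \ref{exa:generic_measure}), it adds no new subsets of $\omega$, so every partition $\langle A_n\rangle_n$ of $\omega$ in the generic extension already lies in the ground model. It will therefore suffice to show that, for every ground-model partition $\langle A_n\rangle_n$, the set
\[
	D_{\langle A_n\rangle} = \left\{ \nu \in \mathbb{P} : \exists S \in \dom(\nu)\ \text{a selector of } \langle A_n\rangle_n \text{ with } \nu(S) = 1 - \sum_n \nu(A_n)\right\}
\]
is dense in $\mathbb{P}$.

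Given a condition $\nu_0$, I would first use $\sigma$-closure together with Lemma \ref{lem:sikorski-measure} to extend $\nu_0$, one $A_n$ at a time, to a condition $\nu$ whose (still countable) domain $\mathbb{A}$ contains every $A_n$. Writing $r = 1-\sum_n \nu(A_n)$, the problem then reduces to producing a selector $S$ of $\langle A_n\rangle_n$ satisfying the compatibility condition
\[
	\nu_*(S) \leq r \leq \nu^*(S),
\]
because then a final application of Lemma \ref{lem:sikorski-measure} extends $\nu$ to a condition $\nu' \in D_{\langle A_n\rangle}$ with $\nu'(S) = r$, as required.

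The hard part, and the main obstacle, will be the simultaneous construction of such a selector. I plan to enumerate the two countable families $\mc{C} = \{C \in \mathbb{A} : \nu(C) > 1-r\}$ and $\mc{D} = \{D \in \mathbb{A} : \nu(D) > r\}$ as $\langle C_k\rangle_k$ and $\langle D_k\rangle_k$, and build $S$ recursively by committing elements of $\omega$ as ``in $S$'' or ``not in $S$''. The crucial observation is that every $C \in \mc{C}$ meets infinitely many blocks $A_n$: since $\nu(C) > 1-r = \sum_n \nu(A_n) \geq \nu\bigl(\bigcup_{n<N} A_n\bigr)$ for every $N$, the set $C$ cannot be covered by any finite union of blocks. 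At stage $k$ I would first pick $t_k \in D_k$ not yet committed (possible because $D_k$ is infinite, $\nu$ being free) and commit $t_k$ as ``not in $S$''; then I would choose $n_k \notin \{n_0,\dots,n_{k-1}\}$ so that $A_{n_k} \cap C_k$ contains an uncommitted element $s_k$, and commit $s_k$ as ``in $S$''. A simple counting argument shows this is always possible, since only finitely many elements are committed at stage $k$ while $C_k$ still offers infinitely many candidate blocks. Setting $S = \{s_k : k\in\omega\}$, distinctness of the $n_k$ makes $S$ a selector, and the commitments yield $S \cap C_k \neq \emptyset$ and $t_k \in D_k \setminus S$ for every $k$, which translate exactly into $\nu^*(S) \geq r$ and $\nu_*(S) \leq r$ respectively.
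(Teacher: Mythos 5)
Your proof is correct, but it takes a genuinely different route from the paper's. The paper proves the proposition by invoking Proposition \ref{lem:selective_p_q} (a measure is selective if and only if it is both a P-measure and a Q-measure) and then verifies the two halves by separate density arguments: the Q-half by building, for a partition into finite sets, a selector $S$ with $S\setminus A$ infinite for every co-infinite $A$ in the domain of the condition (so that $\nu^*(S)=1$), and the P-half by appealing to the non-existence of $(\omega,\omega)$-gaps to find a pseudo-intersection of the correct outer measure. You instead handle an arbitrary partition in a single density argument, constructing a genuine selector $S$ with $\nu_*(S)\le r\le\nu^*(S)$ for $r=1-\sum_n\nu(A_n)$ by an interleaved diagonalization against the countably many sets of measure greater than $1-r$ (each of which, as you correctly observe, must meet infinitely many blocks) and the countably many sets of measure greater than $r$. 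Your key counting step is sound: since the blocks are pairwise disjoint and only finitely many points are committed at each stage, all but finitely many of the infinitely many blocks meeting $C_k$ still offer an uncommitted point of $C_k$, so a fresh block index $n_k$ can always be found. What your approach buys is self-containedness: it bypasses both the P-and-Q characterization and the gap theorem, and it produces a selector of exactly the right measure for an arbitrary partition in one step, rather than composing a pseudo-selector with a finite-block selector as in the proof of Proposition \ref{lem:selective_p_q}. What the paper's approach buys is modularity: its two density arguments independently establish that the generic measure is a P-measure and a Q-measure, facts of separate interest elsewhere in the paper.
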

\begin{proof}
 We will use Proposition \ref{lem:selective_p_q}.

	First, we show that $\forces{\dot{\mu}\text{ is a Q-measure}}{\mathbb{P}}$. Indeed, let $\langle P_n\rangle_n$ be a partition of $\omega$ into finite sets and let $\mu\colon \mathbb{A} \to [0,1]$ be an element of $\mathbb{P}$. We claim that there is a selector $S$ of $\langle P_n\rangle_n$ and a measure
	$\nu\leq \mu$ such that $\nu(S)=1$. Let $\langle A_n\rangle_n$ be an enumeration of all co-infinite elements of $\mathbb{A}$. By an easy induction we can construct a selector $S$ of $\langle P_n\rangle_n$ such that $S\setminus A_n$ is infinite for every $n$. But then, using
	Lemma \ref{lem:sikorski-measure} we may extend $\mu$ to $\nu$ such that $\nu(S)=1$. Finally, notice that as a $\sigma$-closed forcing $\mathbb{P}$ does not add new reals and so it does not add new partitions of $\omega$.

	Now, we will prove that $\forces{ \dot{\mu}\text{ is a P-measure}}{\mathbb{P}}$. Consider a decreasing sequence $\langle A_n\rangle_n$ of subsets of $\omega$ and $\mu \in \mathbb{P}$ such that $\{A_n\colon n\in \omega\}$ is contained in the domain of $\mu$. Now, the argument is
	similar to that from Section \ref{sec:transfinite}: by the fact that there are no
	$(\omega,\omega)$-gaps in $\mathcal{P}(\omega)/fin$ we conclude that there is $X\subseteq \omega$ such that $X\subseteq^* A_n$ for each $n$ and $\mu^*(X)=\lim_{n} \mu(A_n)$. By Lemma \ref{lem:sikorski-measure}, there is $\mu' \leq \mu$ such
	that $\mu'(X)=\lim_n \mu(A_n)$.
\end{proof}

\begin{thm}\label{s-m-n-u} Consistently, there is a selective measure but there are no selective ultrafilters.
\end{thm}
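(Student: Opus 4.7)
The plan is a two-step forcing construction. First, let $V$ be a ZFC model in which there are no P-points. Such a model is produced by Shelah's celebrated countable-support iteration of proper forcings. Since every selective ultrafilter is in particular a P-point, $V$ also contains no selective ultrafilters.

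Second, force over $V$ with the $\sigma$-closed poset $\mathbb{P}$ of free partial measures from Example \ref{exa:generic_measure}. By Proposition \ref{prop:generic_selective}, the generic object $\dot{\mu}_G$ is a selective measure in $V[G]$, which yields the positive half of the conclusion. Moreover, $\sigma$-closure guarantees that $\mathcal{P}(\omega)^{V[G]} = \mathcal{P}(\omega)^V$, so the partitions and subsets of $\omega$ in $V[G]$ are precisely the same as in $V$.

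The main obstacle is to verify that $\mathbb{P}$ does not introduce new P-points over $V$, since, despite $\mathcal{P}(\omega)^{V[G]} = \mathcal{P}(\omega)^V$, a wholly new ultrafilter (a subset of $\mathcal{P}(\omega)^V$ not in $V$) could a priori be added and be a P-point. The strategy is to take an arbitrary $\mathbb{P}$-name $\dot{\mathcal{U}}$ for an ultrafilter on $\omega$ and to use $\sigma$-closure of $\mathbb{P}$ to build, below any condition, a fusion-style decreasing sequence of conditions simultaneously deciding membership in $\dot{\mathcal{U}}$ of prescribed countable collections of ground-model sets. Combining this with the hypothesis that $V$ admits no P-points should allow one to extract in $V$ a decreasing sequence $\langle A_n\rangle_n$ of sets forced into $\dot{\mathcal{U}}$ with no pseudo-intersection forced into $\dot{\mathcal{U}}$, thereby refuting the P-point property of $\dot{\mathcal{U}}$.

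If the direct preservation proves too delicate for the plain two-step setup, an alternative is to replace it by a single countable-support iteration over a model of CH, interleaving applications of $\mathbb{P}$ with Mathias-style forcings killing all potential P-points. One then chooses the bookkeeping so that every $\mathbb{P}$-generic selective measure appearing at some stage survives the remaining iteration as a selective measure (using that the relevant properties are $\Pi$-statements over partitions, and that $\sigma$-closed tails add no new partitions), while every potential P-point is destroyed at a later stage by a Mathias forcing. In either variant, the crux is exactly the same: to control the interaction of $\mathbb{P}$ with the filter structure of $\mathcal{P}(\omega)$ and guarantee that the extensions inherit "no P-points" from the ground model.
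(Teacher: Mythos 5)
The positive half of your plan is fine: forcing with the $\sigma$-closed poset $\mathbb{P}$ of partial measures does add a selective measure by Proposition \ref{prop:generic_selective}. The fatal problem is the step you yourself flag as ``the main obstacle'': showing that $\mathbb{P}$ adds no selective ultrafilter. This is not a technicality that $\sigma$-closure plus ``no P-points in $V$'' can be expected to handle, because $\sigma$-closed forcings are exactly the kind of forcings that \emph{create} selective ultrafilters out of nothing: the classical example is $\mathcal{P}(\omega)/\mathrm{fin}$, whose generic filter is a Ramsey ultrafilter even when the ground model has no P-points at all. Your fusion sketch only engages $\dot{\mathcal{U}}$ with ground-model data, but a new ultrafilter added by the forcing (a new subset of the unchanged $\mathcal{P}(\omega)^V$) need not be tied to any ground-model ultrafilter, so the hypothesis that $V$ has no P-points gives no leverage. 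Worse, over a CH ground model the plan provably fails: $\mathbb{P}$ is separative, atomless, $\sigma$-closed and of size $\mathfrak{c}$, so under CH the folklore equivalence of such posets with $\mathrm{Add}(\omega_1,1)\cong\mathcal{P}(\omega)/\mathrm{fin}$ shows that $\mathbb{P}$ adds a Ramsey ultrafilter. Note also that the filter of $\dot{\mu}$-measure-one sets is itself a selective (P and Q) filter, which is precisely the raw material from which such generics tend to produce selective ultrafilters. Your fallback iteration has the same unresolved core plus a new one: Mathias-style forcings add reals, so the tails of the iteration are not $\sigma$-closed, new partitions appear after the stage where a selective measure was added, and your argument that the measure ``survives'' collapses.

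The paper avoids all of this by going in the opposite direction: it starts from a model \emph{with} a selective ultrafilter $\mathcal{U}$ and forces with random forcing. The associated Solovay measure is a P-measure (cited from \cite{PbnDamian}), $\mathcal{U}$ remains a Q-filter because random forcing is $\omega^\omega$-bounding, so the measure is a Q-measure by Proposition \ref{prop:q-filter} and hence selective by Proposition \ref{lem:selective_p_q}; meanwhile Kunen's theorem that the random model has no selective ultrafilters supplies the negative half. The lesson is that the ``no selective ultrafilters'' side is obtained from a known forcing-theoretic fact about random forcing, not from a preservation argument for the measure-adding poset, which is where your approach has an essential and most likely unfixable gap.
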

\begin{proof}
	We will use the classical random model. Let $\mathcal{U}$ be a selective ultrafilter in the ground model and let $\dot{\mu}$ be the Solovay measure induced by $\mathcal{U}$ (see Section \ref{sec:solovay}). We will work in $V[G]$. By \cite[Theorem
	4.15]{PbnDamian} $\dot{\mu}$ is a
P-measure. We have $\mathcal{U} \subseteq \{X\colon \dot{\mu}(X)=1\}$ (in fact, by \cite[Theorem 5.6]{PbnDamian}, we have even equality here). Since the random forcing is $\omega^\omega$-bounding, $\mathcal{U}$ is a Q-filter in $V[G]$ and so, by Proposition \ref{prop:q-filter}, $\dot{\mu}$ is a Q-measure.
	By Proposition \ref{lem:selective_p_q} we have that $\dot{\mu}$ is a selective measure. 

	But there are no selective ultrafilters in the random model (\cite{Kunen}).
\end{proof}

\begin{thm}\label{manyselective} Suppose that there are infinitely many selective ultrafilters which are pairwise RK-incomparable. Then there is a non-atomic selective measure.
\end{thm}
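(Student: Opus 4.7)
\medskip

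Let $(\mathcal{U}_n)_{n<\omega}$ be infinitely many pairwise RK-incomparable selective ultrafilters. The plan is to build a non-atomic selective $\mu$ as a Fubini-style mixture of the $\mathcal{U}_n$'s. Partition $\omega=\bigsqcup_n I_n$ into infinite pieces, choose bijections $\phi_n\colon\omega\to I_n$, and set $\mathcal{V}_n:=\phi_n[\mathcal{U}_n]$, a selective ultrafilter on $I_n$. Since a selective ultrafilter is a P-point, $\nu:=d_{\mathcal{U}_0}$ is a non-atomic P-measure on $\omega$ by Theorem~\ref{thm:density}. I would then define
\[
\mu(A) := \nu\bigl(\{n\in\omega : A\cap I_n \in \mathcal{V}_n\}\bigr).
\]
Additivity of $\mu$ is immediate from $\mathcal{V}_n$ being an ultrafilter (disjoint $A,A'$ give disjoint index sets), and $\mu$ vanishes on finite sets. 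Non-atomicity follows by lifting: for $\varepsilon>0$ choose a partition $\omega=J_0\sqcup\cdots\sqcup J_{N-1}$ with $\nu(J_i)<\varepsilon$ and let $C_i=\bigcup_{n\in J_i}I_n$, so $\mu(C_i)=\nu(J_i)<\varepsilon$. By Proposition~\ref{lem:selective_p_q}, it then suffices to show that $\mu$ is simultaneously a P-measure and a Q-measure.

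For the P-measure property, given a descending sequence $\langle A_j\rangle_j$ with $\mu(A_j)\to\alpha$, set $N_j:=\{n:A_j\cap I_n\in\mathcal{V}_n\}$; the sequence $\langle N_j\rangle$ is descending with $\nu(N_j)=\mu(A_j)\to\alpha$. Since a P-measure is $\sigma$-additive on $\mathcal{P}(\omega)/_{\nu=0}$ (as used in the proof of Proposition~\ref{prop:p_stronger_than_p0}), one has $\nu\bigl(\bigcap_j N_j\bigr)=\alpha$, so set $P:=\bigcap_j N_j$. For each $n\in P$, all $A_j\cap I_n$ lie in $\mathcal{V}_n$, and the P-property of $\mathcal{V}_n$ (applied to the descending family $\langle A_j\cap I_n\rangle_j$) yields $Q_n\in\mathcal{V}_n$ with $Q_n\subseteq^* A_j\cap I_n$ for every $j$. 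Diagonalize by $Q'_n:=Q_n\cap A_n$; then $Q'_n\in\mathcal{V}_n$ for $n\in P$, so $\mu(S)=\nu(P)=\alpha$ for $S:=\bigsqcup_{n\in P}Q'_n$, and $S\setminus A_j$ is a finite union of finite sets (only $n\in P$ with $n<j$ contribute, each contributing finitely many points), so $S$ is a pseudo-intersection of $\langle A_j\rangle$.

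The heart of the proof is the Q-measure step. Given a partition $\omega=\bigsqcup_k F_k$ into finite sets, the Q-property of each $\mathcal{V}_n$ gives a selector $T_n\in\mathcal{V}_n$ of the finite-set partition $\{F_k\cap I_n:k\}$ of $I_n$; however, a single $F_k$ may span several $I_n$'s, so the naive union $\bigsqcup_n T_n$ is a priori only a pseudo-selector of $\{F_k\}$, meeting each $F_k$ in at most $|F_k|$ points rather than in at most one. The pairwise RK-incomparability of the $\mathcal{U}_n$'s is precisely what allows one to choose the $T_n$'s so that the global union is an honest selector: intuitively each $F_k$ should be ``assigned'' to a single $I_n$ that owns its representative, and the $T_n$'s thinned accordingly, subject to keeping each thinned $T_n$ inside $\mathcal{V}_n$. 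RK-incomparability prevents systematic coincidences between the $\mathcal{V}_n$'s that would force a forbidden removal, and lets one carry out the thinning inductively over $n$. This is the step I expect to be the main obstacle, as it is where the hypothesis is used crucially; a careful combinatorial argument is required. Once the reconciliation is achieved, $S:=\bigsqcup_n T_n$ is a selector of $\{F_k\}$ with $\mu(S)=1$, and combining the P- and Q-measure properties yields, via Proposition~\ref{lem:selective_p_q}, the desired non-atomic selective measure $\mu$.
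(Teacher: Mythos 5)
Your construction has a fatal structural flaw, independent of the unfinished Q-step: a Fubini-style mixture over disjoint infinite blocks $I_n$ can never be selective (nor even a P-measure). Concretely, consider the partition $\langle I_n\rangle_n$ itself. Each piece is $\mu$-null, since $N_{I_n}:=\{m: I_n\cap I_m\in\mathcal{V}_m\}=\{n\}$ and $\nu$ is free, so $\mu(I_n)=0$; a selective measure would therefore have to admit a selector $S$ of $\langle I_n\rangle_n$ with $\mu(S)=1$. But any set $S$ with $|S\cap I_n|\le 1$ has $S\cap I_n\notin\mathcal{V}_n$ for every $n$ (the $\mathcal{V}_n$ are free), hence $\mu(S)=0$. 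The same phenomenon kills the P-property: for $A_j=\bigcup_{n\ge j}I_n$ one has $\mu(A_j)=1$ for all $j$, while every pseudo-intersection $S$ of $\langle A_j\rangle_j$ meets each $I_n$ in a finite set and so has $\mu(S)=0$. This is the exact analogue of the classical fact that a Fubini sum of ultrafilters is never a P-point. There is also a local error in your P-measure paragraph: you replace the pseudo-intersection of $\langle N_j\rangle_j$ by the literal intersection $\bigcap_j N_j$ and assert $\nu(\bigcap_j N_j)=\alpha$; this fails already for $N_j=\omega\setminus j$, and a pseudo-intersection does not repair the argument, because for any $n$ that eventually leaves the $N_j$'s no set of $\mathcal{V}_n$ can be almost contained in all $A_j\cap I_n$.

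The paper avoids this trap by keeping all the ultrafilters on the \emph{same} copy of $\omega$: it takes $\mu$ to be any weak$^*$ cluster point of the averages $\frac{1}{n}(\delta_{\mathcal{U}_0}+\dots+\delta_{\mathcal{U}_{n-1}})$. Then $\mu(S)=1$ as soon as $S\in\mathcal{U}_n$ for every $n$, so selectivity reduces to producing, for a given partition $\langle f^{-1}[k]\rangle_k$, a single set that is a selector and lies in all the $\mathcal{U}_n$ simultaneously. This is where RK-incomparability enters: the images $f[\mathcal{U}_n]$ are pairwise distinct P-points, so one can choose pairwise disjoint sets $A_n$ with $f^{-1}[A_n]\in\mathcal{U}_n$, and then $S=\bigcup_n(S_n\cap f^{-1}[A_n])$ (with $S_n$ a selector in $\mathcal{U}_n$) is an honest selector of full measure. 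If you want to salvage your write-up, you should abandon the disjoint blocks $I_n$ and adopt this averaging construction.
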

\begin{proof}
	Let $\langle\mathcal{U}_n\rangle_n$ be a family of pairwise RK-incomparable selective ultrafilters. Let $\mu$ be a measure in the weak$^*$ closure of the set of measures of the form
	\[ \frac{1}{n}\big(\delta_{\mathcal{U}_0}+\dots+\delta_{\mathcal{U}_{n-1}}\big). \]
	This is a non-atomic measure on $\omega$.
		To show that it is selective consider a function $f\colon \omega \to \omega$  and let $S_n$ be a selector of $\langle f^{-1}[n]\rangle_n$ appropriate for $\mathcal{U}_n$.
	\medskip

		\textbf{Claim.} There is a family $\langle A_n\rangle_n$ of pairwise disjoint sets such that $f^{-1}[A_n] \in \mathcal{U}_n$ for each $n$.
		\medskip

		Indeed, since $\langle \mathcal{U}_n\rangle_n$ are RK-incomparable,  $\langle f[\mathcal{U}_n]\rangle_n$ are pairwise different. For every $n$, by the fact that $f[\mathcal{U}_n]$ is a P-point (and so it is not in a closure of any countable subset), there is $A_n \in f[\mathcal{U}_n]$ such that $A_n \notin f[\mathcal{U}_k]$ for $k\ne n$. The family $\langle A_n\rangle_n$ satisfies the desired property.
		\medskip

		Let \[ S = \bigcup_n \big(S_n \cap f^{-1}[A_n]\big). \]
		The set $S$ is a selector of $\langle f^{-1}[n]\rangle_n$ and $\mu(S) = 1$.
\end{proof}

Note that the assumption on non-comparability is crucial. Consider a selective ultrafilter $\mathcal{U}$ and for $n\in \omega$, define $\mathcal{U}_n$ as the ultrafilter generated by  $\{n+A\colon A\in \mathcal{U}\}$. Then every measure in the
weak$^*$ closure of the set of measures of the form
\[ \frac{1}{n}\big(\delta_{\mathcal{U}_0}+\dots+\delta_{\mathcal{U}_{n-1}}\big) \]
is shift invariant and so it cannot be selective (see Proposition \ref{qnotdens}).

\begin{prop}\label{2tocontinuum} If there is a non-atomic Q-measure, then there are $2^\mathfrak{c}$ many coherence classes with Q-points. \end{prop}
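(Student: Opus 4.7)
The plan is to use the Q-filter of full-measure sets as a source of Q-points, and the non-atomicity of $\mu$ to produce very many extensions. Set $\mathcal{F}=\{F\subseteq\omega\colon\mu(F)=1\}$. By Proposition \ref{prop:q-filter}, $\mathcal{F}$ is a Q-filter, and the Q-filter property trivially passes to any superfilter (a selector in $\mathcal{F}$ automatically lies in every filter containing $\mathcal{F}$), so every ultrafilter $\mathcal{U}\supseteq\mathcal{F}$ is a Q-point. Because each Rudin--Keisler coherence class is parametrised by bijections of $\omega$ and hence has cardinality at most $\mathfrak{c}$, and because $2^{\mathfrak{c}}\cdot\mathfrak{c}=2^{\mathfrak{c}}$, it will be enough to produce $2^{\mathfrak{c}}$ distinct ultrafilters extending $\mathcal{F}$ in order to obtain $2^{\mathfrak{c}}$ coherence classes containing Q-points.

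The key step is an independent family lemma: any non-atomic finitely additive probability $\mu$ on $\omega$ admits a family $\langle B_\alpha\rangle_{\alpha<\mathfrak{c}}\subseteq\mathcal{P}(\omega)$ that is \emph{independent modulo the null ideal of $\mu$}, i.e.\ for all pairs of disjoint finite $F_0, F_1\subseteq\mathfrak{c}$,
\[
\mu\Bigl(\,\bigcap_{\alpha\in F_0}B_\alpha\,\cap\,\bigcap_{\alpha\in F_1}(\omega\setminus B_\alpha)\Bigr)>0.
\]
Granting this, the endgame is purely formal: for each $\varphi\in 2^{\mathfrak{c}}$ the collection $\mathcal{F}\cup\{B_\alpha\colon\varphi(\alpha)=1\}\cup\{\omega\setminus B_\alpha\colon\varphi(\alpha)=0\}$ has the finite intersection property and thus extends to an ultrafilter $\mathcal{U}_\varphi\supseteq\mathcal{F}$. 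Distinct $\varphi$'s yield distinct ultrafilters (separated by some $B_\alpha$), giving $2^{\mathfrak{c}}$ Q-points and, by the coarse counting just mentioned, $2^{\mathfrak{c}}$ coherence classes.

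The main obstacle is proving the lemma. My preferred route passes through the $\sigma$-additive measure-algebra completion $(\mathbb{A},\bar\mu)$ of $\mathcal{P}(\omega)/\{\mu=0\}$: since $\mu$ is non-atomic, $\bar\mu$ is a strictly positive atomless $\sigma$-additive probability on the complete Boolean algebra $\mathbb{A}$, and Maharam's theorem places a copy of the Lebesgue measure algebra inside $\mathbb{A}$, which contains a classical independent family of size $\mathfrak{c}$. Pulling this family back to genuine subsets of $\omega$ via the canonical quotient map, together with the density of $\mathcal{P}(\omega)/\{\mu=0\}$ in $\mathbb{A}$ in the measure metric, should produce the required $B_\alpha$'s. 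A more hands-on alternative is a transfinite recursion of length $\mathfrak{c}$, at each successor stage using non-atomicity of $\mu$ to find a new set that splits every positive-measure Boolean combination of the previously chosen $B_\beta$'s into two pieces of positive measure. Either way, this splitting/lifting step is where the genuine combinatorial work lies; the rest of the argument is a standard FIP-and-counting exercise.
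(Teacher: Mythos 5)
The first half of your argument (that $\mathcal{F}=\{F\colon\mu(F)=1\}$ is a Q-filter and that every ultrafilter extending it is a Q-point) is exactly the paper's opening move. The genuine gap is in your counting step. In this paper a ``coherence class'' is a \emph{near-coherence} class: $\mathcal{U}$ and $\mathcal{V}$ are nearly coherent if $f[\mathcal{U}]=f[\mathcal{V}]$ for some finite-to-one $f$. Such a class is emphatically \emph{not} of cardinality at most $\mathfrak{c}$ in general --- under NCF there is a single coherence class containing all $2^{\mathfrak{c}}$ ultrafilters, and the Mildenberger model invoked in Corollary \ref{heike} has exactly three classes. So your assertion that each coherence class ``is parametrised by bijections of $\omega$ and hence has cardinality at most $\mathfrak{c}$'' confuses coherence classes with RK-isomorphism classes, and with it the inference from ``$2^{\mathfrak{c}}$ Q-points'' to ``$2^{\mathfrak{c}}$ coherence classes'' collapses. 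The missing ingredient is Blass's theorem that nearly coherent Q-points are isomorphic: it is this that bounds the number of \emph{Q-points} inside any single coherence class by $\mathfrak{c}$ (the size of an isomorphism class), even though the class itself may contain $2^{\mathfrak{c}}$ ultrafilters. Without that theorem your $2^{\mathfrak{c}}$ Q-points could in principle all be nearly coherent to one another.

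A secondary remark: your route to the $2^{\mathfrak{c}}$ ultrafilters extending $\mathcal{F}$, via a family of size $\mathfrak{c}$ independent modulo the $\mu$-null ideal, is much heavier than necessary, and its key lemma is exactly the part you leave unproved (the lifting from the Maharam completion back to $\mathcal{P}(\omega)$ needs care, since the approximation errors must be controlled against the measures of arbitrarily long Boolean combinations, which tend to $0$; the transfinite-recursion alternative likewise requires splitting $|\alpha|$-many positive sets simultaneously at stage $\alpha$, which is not immediate for a merely finitely additive $\mu$ once $\alpha$ is uncountable). The paper sidesteps all of this: non-atomicity gives arbitrarily large finite families of positive-measure sets compatible with $\mathcal{F}$, so the set of ultrafilters extending $\mathcal{F}$ is an infinite closed subset of $\beta\omega\setminus\omega$, hence contains a copy of $\beta\omega$ and has cardinality $2^{\mathfrak{c}}$.
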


\begin{proof} If $\mu$ is a non-atomic Q-measure, then $\mathcal{F} = \{F\colon \mu(F)=1\}$ is a Q-filter. Since every closed infinite subset of $\beta\omega$ contains a copy of $\beta\omega$,  there are $2^\mathfrak{c}$ extensions of $\mathcal{F}$
	to ultrafilters. Clearly, all of them are Q-points. Now, notice that if Q-points are nearly coherent, then they are isomorphic (see e.g. \cite[Corollary 2]{Blass73}). But there are only $\mathfrak{c}$ many possible isomorphism classes.  
\end{proof}

As an immediate corollary of Theorem \ref{manyselective} and Proposition \ref{2tocontinuum} we get the following.

\begin{cor} If there are infinitely many selective ultrafilters which are pairwise RK-incompatible, then there are $2^\mathfrak{c}$ many coherence classes with Q-points.
\end{cor}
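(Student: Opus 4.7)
The plan is to simply chain the two previously established results. First I would invoke Theorem \ref{manyselective}: by hypothesis we have an infinite family $\langle \mathcal{U}_n\rangle_n$ of pairwise RK-incomparable selective ultrafilters, so the theorem directly produces a non-atomic selective measure $\mu$ on $\omega$.

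Next I would observe that by Proposition \ref{lem:selective_p_q} every selective measure is in particular a Q-measure. Thus $\mu$ is a non-atomic Q-measure, so the hypothesis of Proposition \ref{2tocontinuum} is satisfied.

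Finally, applying Proposition \ref{2tocontinuum} to $\mu$ yields $2^\mathfrak{c}$ many (near) coherence classes containing Q-points, which is exactly the conclusion. There is no real obstacle here: the whole content of the corollary is packaged into Theorem \ref{manyselective} and Proposition \ref{2tocontinuum}, with Proposition \ref{lem:selective_p_q} serving as the trivial glue that turns "selective" into "Q".
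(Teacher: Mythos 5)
Your proposal is correct and matches the paper exactly: the authors state this corollary as an immediate consequence of Theorem \ref{manyselective} and Proposition \ref{2tocontinuum}, with Proposition \ref{lem:selective_p_q} supplying the observation that a selective measure is in particular a Q-measure. Nothing further is needed.
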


Also, we get that the existence of a selective ultrafilter does not guarantee the existence of a non-atomic selective (even Q-) measure.

\begin{cor}\label{heike} Consistently, there is a selective ultrafilter, but there are no non-atomic Q-measures.
\end{cor}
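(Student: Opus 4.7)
The plan is to deduce this from Proposition~\ref{2tocontinuum}. That proposition states that the existence of a non-atomic Q-measure forces the Q-points on $\omega$ to spread across $2^{\mathfrak{c}}$ near-coherence classes. Contrapositively, in any model in which the Q-points occupy fewer than $2^{\mathfrak{c}}$ near-coherence classes, no non-atomic Q-measure can exist. Thus it suffices to produce a model of ZFC in which (a) a selective ultrafilter exists, and (b) the Q-points fall into fewer than $2^{\mathfrak{c}}$ near-coherence classes.

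The natural route is to start from a ground model $V\models \mathrm{CH}$, which provides a selective ultrafilter $\mathcal{U}_0$, and then iterate a proper forcing of length $\omega_2$ with countable support that (i) preserves $\mathcal{U}_0$ as a selective ultrafilter in the extension, via the standard Mathias/$\mathcal{U}_0$-preservation theorem for countable-support iterations of proper forcings, and (ii) through a bookkeeping over all potential Q-points lying outside the near-coherence class of $\mathcal{U}_0$, destroys each of them by forcing an interval partition into finite sets having no selector in the offending filter. In the resulting model one has $\mathfrak{c}=\omega_2$ and $2^{\mathfrak{c}}\geq\omega_3$, while every surviving Q-point is near-coherent to $\mathcal{U}_0$, so the Q-points occupy just one near-coherence class --- far fewer than $2^{\mathfrak{c}}$.

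The main obstacle is calibrating the iteration so that the preservation of $\mathcal{U}_0$ and the destruction of outside Q-points coexist. The preservation side is classical: one invokes a $\mathcal{U}_0$-selectivity preservation theorem along countable-support proper iterations. The destruction side requires an anticipation argument ensuring that every would-be Q-point in the final extension is already witnessed at some intermediate stage, so that a Mathias-style poset adding a suitable partition with no small-measure selector can be plugged in at that stage without damaging $\mathcal{U}_0$. Once such a model is produced, Proposition~\ref{2tocontinuum} immediately forbids any non-atomic Q-measure in it, while $\mathcal{U}_0$ witnesses the selective ultrafilter, completing the proof.
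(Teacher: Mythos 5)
Your reduction to Proposition~\ref{2tocontinuum} is exactly the paper's strategy: a non-atomic Q-measure forces $2^{\mathfrak{c}}$ many near-coherence classes containing Q-points, so any model with a selective ultrafilter in which the Q-points occupy few near-coherence classes witnesses the corollary. The divergence is in how the model is obtained. The paper simply cites Mildenberger's theorem (\cite{Heike}) that there is a model with \emph{exactly three} near-coherence classes of ultrafilters, one of which contains a selective ultrafilter; the corollary is then immediate. You instead propose to build such a model from scratch, and this is where the proof has a genuine gap: essentially all of the difficulty of the corollary has been pushed into an unproved forcing construction whose feasibility is far from routine.

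Concretely, two steps of your sketch do not go through as described. First, the ``bookkeeping over all potential Q-points lying outside the near-coherence class of $\mathcal{U}_0$'' is not a standard anticipation argument: in the final model $\mathfrak{c}=\omega_2$, so every ultrafilter there has character $\omega_2$ and its trace at an intermediate stage is merely a filter, which may have many pairwise non-near-coherent ultrafilter extensions later on; adding a partition with no selector in such a trace neither identifies nor kills the right objects, and there are $2^{\mathfrak{c}}$ ultrafilters to handle with only $\omega_2$ stages. The known ways of killing all Q-points (or all rapid filters) work through a global preservation property of the whole iteration, not stage-by-stage bookkeeping, and killing Q-points \emph{selectively} while keeping one selective ultrafilter alive is precisely the hard content of Mildenberger-type near-coherence results. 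Second, the claim that preservation of $\mathcal{U}_0$ as a selective ultrafilter is ``classical'' for whatever iterands you use to destroy the other Q-points is unjustified: the preservation theorems for P-points/selective ultrafilters along countable-support iterations impose specific conditions on each iterand, and you have not exhibited a poset that both destroys an offending filter's Q-ness and satisfies them. To repair the proof you should either cite an existing model with the required properties (as the paper does with \cite{Heike}) or supply the full iteration argument, which would be a substantial theorem in its own right.
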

\begin{proof}
	In \cite{Heike} Mildenberger presented a model in which  there are exactly three coherence classes of ultrafilters on $\omega$ and there is a selective ultrafilter. By Proposition \ref{2tocontinuum} there is no non-atomic Q-measure in this model.
\end{proof}

 Similar to the other types of measures, we can give a result about the generic existence of selective measures, which is analogous to the well-known result that every filter base of size less than $\mathfrak{c}$ can be extended to a selective ultrafilter if and only if $\mathrm{cov}(\mathcal{M}) =\mathfrak{c}$ (See \cite{canjargeneric} or \cite[Theorem 4.5.6]{Bartoszynski}). 

 \begin{thm}\label{covM=c}
	$\mathrm{cov}(\mathcal{M}) = \mathfrak{c}$ if and only if every measure $\mu$ defined on a Boolean algebra $\mathbb{B}\subseteq \mathcal{P}(\omega)$ such that $|\mathbb{B}| < \mathfrak{c}$, can be extended to a selective measure on $\omega$.
\end{thm}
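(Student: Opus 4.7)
The plan is to parallel the proof of Theorem~\ref{d=c}, with $\mathrm{cov}(\mathcal{M})$ in place of $\mathfrak{d}$ and a Baire-category argument in place of the domination argument. For the forward direction, suppose $\mathrm{cov}(\mathcal{M})=\mathfrak{c}$ and $\mu$ is defined on $\mathbb{B}$ with $|\mathbb{B}|<\mathfrak{c}$. I would enumerate $[\omega]^\omega = \{X_\alpha\}_{\alpha<\mathfrak{c}}$ and all partitions of $\omega$ as $\{\langle A^\alpha_n\rangle_n\}_{\alpha<\mathfrak{c}}$ (each listed cofinally) and build by transfinite recursion an increasing chain $(\mathbb{B}_\alpha,\mu_\alpha)_{\alpha<\mathfrak{c}}$ with $|\mathbb{B}_\alpha|<\mathfrak{c}$, starting from $(\mathbb{B},\mu)$. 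At stage $\alpha$ I would use Lemma~\ref{lem:sikorski-measure} to add $X_\alpha$ and then, iteratively, the pieces $A^\alpha_n$ of the $\alpha$-th partition (each assigned its running inner measure, which keeps the total bounded by $1$). Writing $t := \sum_n \mu'(A^\alpha_n)$ in the current extension $\mu'$, the remaining task is to adjoin a selector $S$ with $\mu_\alpha(S)=1-t$. The automatic bound $(\mu')^*(S) \leq 1-t$ for any selector $S$ (since $(\mu')_*(\omega\setminus S) \geq \sum_n \mu'(A^\alpha_n\setminus\{s_n\}) = t$) reduces the problem to finding an $S$ meeting every $C \in \mathbb{B}'$ with $\mu'(C) > t$.

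The key step uses Baire category in the Polish space of selectors $X := \prod_n A^\alpha_n$. For each such $C$, the set
\[
    D_C = \{\sigma\in X : \sigma(n)\in C \text{ for some } n\}
\]
is open and dense: if a basic cylinder fixing coordinates $n_1,\ldots,n_k$ were disjoint from $D_C$, then $C \subseteq \bigcup_{i\leq k}A^\alpha_{n_i}$, yielding $\mu'(C) \leq t$, a contradiction. There are at most $|\mathbb{B}'|<\mathfrak{c}=\mathrm{cov}(\mathcal{M})$ such $C$'s, so $\bigcap_C D_C \neq \emptyset$ by the definition of $\mathrm{cov}(\mathcal{M})$; any $\sigma$ in the intersection yields $S = \{\sigma(n) : n<\omega\}$, and a final application of Lemma~\ref{lem:sikorski-measure} extends $\mu'$ to assign $\mu_\alpha(S)=1-t$. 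The resulting measure $\bigcup_\alpha \mu_\alpha$ is total by the bookkeeping on $\{X_\alpha\}$ and selective by the cofinal handling of every partition.

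For the converse, assume $\mathrm{cov}(\mathcal{M})<\mathfrak{c}$ and let $\{f_\alpha:\alpha<\mathrm{cov}(\mathcal{M})\}$ be an eventually-different family in $\omega^\omega$ (Bartoszy\'nski's characterization of $\mathrm{cov}(\mathcal{M})$). On $\omega\times\omega$, with the column partition $C_n = \{n\}\times\omega$ and obstructing graphs $G_\alpha = \mathrm{graph}(f_\alpha)$, for any selector $T = \mathrm{graph}(h)$ there is $\alpha$ with $h\neq^* f_\alpha$, so $T\cap G_\alpha$ is finite and $T \subseteq^* (\omega\times\omega)\setminus G_\alpha$; if $\mu(G_\alpha) > 0$, then $\mu^*(T) \leq 1-\mu(G_\alpha) < 1$, blocking any selective extension. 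The main obstacle is producing a finitely additive measure on the algebra generated by $\{C_n\}\cup\{G_\alpha\}$ with $\mu(C_n)=0$ and $\mu(G_\alpha)>0$ for all $\alpha$: if the $f_\alpha$'s are pairwise eventually different, the graphs are pairwise almost-disjoint and only countably many can carry positive measure. Hence the witnessing family must be chosen with substantial pairwise agreement, after which a transfinite construction in the spirit of Section~\ref{sec:transfinite} keeps each outer measure $\mu^*(G_\alpha)$ positive at every stage and delivers the counterexample, mirroring the ultrafilter construction from \cite{canjargeneric}.
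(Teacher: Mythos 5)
Your forward direction is essentially sound and amounts to a repackaging of the paper's argument: where the paper applies the Bartoszy\'nski characterization of $\mathrm{cov}(\mathcal{M})$ via infinitely-often-equal partial functions (building $f_b(i) = $ the first index $j$ with $a^i(j)\notin b$ and finding $g$ meeting every $f_b$ infinitely often), you run the equivalent Baire-category argument directly in the product space $\prod_n A^\alpha_n$ of selectors. Your density computation for $D_C$ and the two-sided estimate $(\mu')^*(S)=1-t$ are correct; the only things you should say explicitly are that $\prod_n A^\alpha_n$ is a perfect Polish space whenever infinitely many pieces have at least two elements (the remaining case being trivial), and that all perfect Polish spaces share the same category covering number, so that ``fewer than $\mathrm{cov}(\mathcal{M})$ dense open sets have nonempty intersection'' applies there. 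These are standard facts, so this direction is fine.

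The converse, however, has a genuine gap, and your own diagnosis already identifies why the approach fails: for a selective extension to be blocked, every selector $T=\mathrm{graph}(h)$ of the column partition must satisfy $\mu^*(T)<1$ \emph{for the base measure} $\mu$, which forces the witnessing graph $G_\alpha$ (with $T\cap G_\alpha$ finite) to already have $\mu(G_\alpha)>0$ in the base algebra --- keeping $\mu^*(G_\alpha)$ merely positive is useless, since an extension is then free to assign $G_\alpha$ measure zero and give $T$ full measure. Since an eventually-different-dominating family has pairwise almost disjoint graphs, at most countably many of them can carry positive measure, and a countable subfamily is never eventually-different-dominating; so the obstruction is real and the vague appeal to ``substantial pairwise agreement'' does not resolve it. The paper sidesteps this entirely by re-coding: it works on the countable set $X=\bigcup_n \omega^{I_n}$ with $|I_n|=n$, replaces the graphs by the sets $E_\alpha=\{s\in X: s\in X_n \Rightarrow \exists i\in I_n\; s(i)=f_\alpha(i)\}$, and observes that the $E_\alpha$ together with the tails $D_n=\bigcup_{i>n}X_i$ have the finite intersection property (the blocks grow, so one partial function can agree with finitely many $f_\alpha$'s at distinct points of $I_n$). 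Hence \emph{all} the $E_\alpha$ can be given measure $1$ by the $\{0,1\}$-valued measure of the generated filter, a full-measure selector of $\langle X_n\rangle_n$ must meet every $E_\alpha$ infinitely often, and its union is a single function infinitely often equal to every $f_\alpha$; this shows directly that no family of size $<\mathfrak{c}$ witnesses $\mathrm{cov}(\mathcal{M})$, rather than attempting to build a non-extendable measure from a dominating family. Without this (or some comparable) idea your converse does not go through.
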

\begin{proof}
    For the proof, we will need the following characterization of $\mathrm{cov}(\mathcal{M})$, which follows from \cite[Lemma 2.4.2]{Bartoszynski}.
    \begin{thm*}
    $\kappa < \mathrm{cov}(\mathcal{M})$ if and only if for every collection $\{ f_\alpha : \alpha \in \kappa \}$ of infinite partial functions, there is $g \in \omega^\omega$ such that $g \cap f_\alpha$ is infinite for each $\alpha < \kappa$. Moreover, $\mathrm{cov}(\mathcal{M})$ is the smallest cardinality of a family $\mathcal{F} \subseteq \omega^\omega$ such that for all $g \in \omega^\omega$ there is $f \in \mathcal{F}$ such that $f \cap g$ is finite. 
    \end{thm*}
$(\Rightarrow)$. The proof goes mostly the same as in the proof of Theorem \ref{d=c}, the difference is that in the recursion, we ask that $S$ is a selector instead of a pseudo-selector, but the definition of $\mu', \mathbb{B}'$ and $\mathcal{B}$ are the same. With that in mind, we will only have to define how the selector is constructed. For each $n$, enumerate $A^\alpha_n$ as $\{ a^n(i) : i \in N\}$ where $N$ is either a natural number, or $\omega$. For each $b \in \mathcal{B}$ define $f_b$ a partial function such that $f_b(i) = j$ if and only if $j$ is the first one such that $a^i(j) \notin b$ (which can be done for infinitely many $i$). Since $|\mathcal{B}'| < \mathrm{cov}(\mathcal{M})$, there must be a $g \in \omega^\omega$ such that $g \cap f_b$ is infinite for each $b \in \mathcal{B}$. Without loss of generality, we may assume that each $g(i) \leq |A_i^\alpha|$. Let $S = \{ a^n(g(n)) : n \in \omega \}$. Clearly $S$ is a selector of the partition $(A_n^\alpha)_n$. If $b \in \mathcal{B}$, then there are infinitely many $i$ where $g(i) = f_b(i)$, and therefore $S$ avoids infinitely many elements of $b$, so $S \nsubseteq b$. Therefore, continuing the argument just like in Theorem \ref{d=c}, we get that $\mu'^*(S) = 1 - \sum_n\mu'(A^\alpha_n)$, and Lemma \ref{lem:Sikorski_extension} provides a way to extend $\mu'$ and $\mathbb{B}'$ so all the conditions are satisfied. 

	($\Leftarrow$) Let $\kappa < \mathfrak{c}$ and $\{ f_\alpha : \alpha < \kappa\} \subseteq \omega^\omega.$ We will show that there is $g \in \omega^\omega$ such that $g\cap f_\alpha$ is infinite for each $\alpha$. First, find a partition $\omega =
	\bigcup_{n \in \omega} I_n$ such that $|I_n| = n$, let $X_n = \omega^{I_n}$ and let $X = \bigcup_{n \in \omega} X_n$. Clearly $X$ is a countable set. Let $D_n = \bigcup_{i>n} X_i$ and, for each $\alpha \in \kappa$, let \[ E_\alpha = \{ s \in X :
	\text{ if }s \in X_n, \text{ then }s(i) = f_\alpha (i) \text{ for some }i \in I_n \}.\]  Observe that if $F \in [\kappa]^{<\omega}$, then, for each $i > |F|$, one can build $s \in \omega^{I_i}$ such that $s \in \bigcap_{\alpha \in F} E_\alpha$ and therefore the family $\{ D_n, E_\alpha : n \in \omega, \alpha \in \kappa\}$ generates a filter $\mathcal{F}$ on $X$. Consider the Boolean algebra $\mathbb{B}$ generated by $\{ D_n, E_\alpha : n \in \omega, \alpha \in \kappa\}$. Then, the following function is a free measure on $\mathbb{B}$:
    $$
\delta_\mathcal{F}(A) \begin{cases}
    1 & \text{if }A \in \mathcal{F},\\
    0 & \text{otherwise.}
\end{cases}
    $$
Then $\delta_\mathcal{F}$ can be extended to a full selective measure $\mu$. Therefore, there must be a selector $S$ for the partition $\langle X_n\rangle _n$ such that $\mu(S) = 1$. Without loss of generality, we may assume that $S \cap X_n$ has
	exactly one element. Then, $g = \bigcup S$ is a function. Note that, for each $\alpha$, $\mu(E_\alpha) =1$, and therefore $S \cap E_\alpha$ is infinite, and so $g \cap f_\alpha$ is infinite.
\end{proof}

\section{RB-minimal measures versus Q-measures}\label{sec:minimality}

In this section, we will address the following question:

\begin{prob} Is every RB-minimal measure a Q-measure?
\end{prob}

Of course, this is the case for ultrafilters. The main tool in the proof is the following fact (see \cite[Theorem 1]{Blass73}).

\begin{prop}\label{fixed} Suppose that $f\colon \omega \to \omega$ and an ultrafilter $\mathcal{U}$ is such that $f[\mathcal{U}]=\mathcal{U}$. Then the set of fix points of $f$ is in $\mathcal{U}$.
\end{prop}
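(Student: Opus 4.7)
The plan is the classical fixed-point argument for ultrafilters, via a three-coloring of the functional graph of $f$. Suppose towards a contradiction that the set of fixed points $F = \{n \colon f(n) = n\}$ does not belong to $\mathcal{U}$, so that $A = \omega \setminus F \in \mathcal{U}$ and $f$ has no fixed points on a set of $\mathcal{U}$.

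The main tool I would use is Kat\v{e}tov's partition lemma: if $g \colon X \to X$ has no fixed points, then $X$ can be partitioned into three pieces $X_0, X_1, X_2$ with $g[X_i] \cap X_i = \emptyset$ for each $i$. Applying this to $f \restr A$ yields a partition $A = A_0 \cup A_1 \cup A_2$ with $A_i \cap f[A_i] = \emptyset$. Since $\mathcal{U}$ is an ultrafilter, some $A_{i_0} \in \mathcal{U}$. The equality $f[\mathcal{U}] = \mathcal{U}$ translates to $B \in \mathcal{U} \iff f^{-1}[B] \in \mathcal{U}$, hence $f^{-1}[A_{i_0}] \in \mathcal{U}$, and therefore the intersection $A_{i_0} \cap f^{-1}[A_{i_0}]$ lies in $\mathcal{U}$; in particular it is nonempty. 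But any $n$ in this intersection satisfies $n \in A_{i_0}$ and $f(n) \in A_{i_0}$, so $f(n) \in A_{i_0} \cap f[A_{i_0}]$, contradicting the disjointness above.

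The only nontrivial ingredient is the partition lemma itself, which is where I expect the main difficulty to lie if one proves it from scratch. It can be obtained by a greedy $3$-coloring of the functional graph $G$ on $A$ with edges $\{n, f(n)\}$: since every vertex has out-degree one under $f$, each connected component of $G$ contains at most one cycle and is therefore either a tree or a tree with a single cycle attached. Such graphs are $3$-colorable (an even cycle is already $2$-colorable; only odd cycles force the third color), and the three color classes provide the desired sets $A_0, A_1, A_2$. Alternatively, one can reduce the problem to finite sub-instances via a compactness argument and $3$-color each finite piece by induction on its size, removing a vertex of minimum degree at each step.
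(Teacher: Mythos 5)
Your proof is correct and is essentially the classical argument: the paper itself gives no proof of this proposition, citing \cite[Theorem 1]{Blass73}, and the standard proof there is exactly the one you describe, via the Kat\v{e}tov three-set lemma applied to $f$ off its fixed-point set, followed by the observation that $f[\mathcal{U}]=\mathcal{U}$ forces $A_{i_0}\cap f^{-1}[A_{i_0}]\in\mathcal{U}$ for the piece $A_{i_0}\in\mathcal{U}$. Your two routes to the partition lemma (the structural analysis of the functional graph, or compactness plus the minimum-degree greedy coloring of finite subgraphs) are both standard and sound.
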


This fact cannot be directly generalized for measures: Consider $f(n) = n+1$ and a shift invariant measure $\mu$. Then $f[\mu]=\mu$ but the set of fixed points is empty. 

There is another possible way of generalizing Proposition \ref{fixed} to the realm of measures. Again, suppose that $\mu$ is a measure and $f\colon \omega \to \omega$ is a finite-to-one function such that $f[\mu] = \mu$. We may define an equivalence relation on $\omega$:

\begin{equation}\label{equivalence}
 k_1 \sim k_2 \mbox{ if there are }n_1, n_2\mbox{ such that }f^{(n_1)}(k_1) = f^{(n_2)}(k_2). 
\end{equation}

Choose a transversal $S\subseteq \omega$ of equivalence classes of $\sim$. Define
\[ X = \bigcup\left\{f^{(n)}[S]\colon n>0\right\}. \]

The set $X$ is an attractor of $f$, i.e. for each $k \in \omega$ there is $n$ such that $f^{(n)}(k)\in X$. It is also a selector for $\{f^{-1}[n]\colon n\in \omega\}$. It seems natural to conjecture that $\mu(X)=1$. That would be a natural generalization for Proposition	\ref{fixed}. However this is not true:

Consider the measure from Example \ref{fix}, i.e. $\mu$ such that
\[ \mu(A) = \int \big|A\cap [2^n, 2^{n+1})\big|/2^n \ d\mu. \]
Let $f\colon \omega \to \omega$ be such that $f^{-1}[n] = [2^n, 2^{n+1})$. Then $f[\mu] = \mu$ since
\[ \mu(f^{-1}[A]) = \int \big|f^{-1}[A] \cap [2^n, 2^{n+1})\big|/2^n \ d\mu = \int \chi_A \ d\mu = \mu(A). \]
But the equivalence relation (\ref{equivalence}) has only one equivalence class and each attractor $X$ defined as above is finite. So $\mu(X)=0$.

The above approach reveals an interesting phenomenon: measures which are RB-minimal and which are not Q$^+$-measures are RB-equivalent to a shift invariant measure.

\begin{thm} \label{thm:RBMinDichotomy} Suppose $\mu$ is RB-minimal. If $\mu$ is not a Q$^+$-measure, then there is a shift invariant $\nu \leq_{RB} \mu$.
\end{thm}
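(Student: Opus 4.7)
The strategy is to extract a finite-to-one measure-preserving self-map $h\colon\omega\to\omega$ with $h[\mu]=\mu$ from the non-Q$^+$ hypothesis (using RB-minimality), and then build a finite-to-one $\phi\colon\omega\to\omega$ satisfying $\phi(h(k))=\phi(k)+1$ for $\mu$-almost every $k$. Once such $\phi$ is in hand, the identity
\[
h^{-1}[\phi^{-1}[A+1]] = \phi^{-1}[A]
\]
holds on the full-measure set where the shift relation is satisfied; combined with $h[\mu]=\mu$ this gives $\phi[\mu](A+1)=\phi[\mu](A)$ for every $A\subseteq\omega$, so $\nu:=\phi[\mu]\leq_{RB}\mu$ is shift-invariant.

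\emph{Extracting $h$.} Let $\langle Q_n\rangle_n$ be a partition of $\omega$ into finite sets witnessing the failure of Q$^+$: every selector has $\mu$-measure $0$. Define $f\colon\omega\to\omega$ by $f\restr Q_n\equiv n$; then $f$ is finite-to-one and $f[\mu]\leq_{RB}\mu$. By RB-minimality there is a finite-to-one $g$ with $g[f[\mu]]=\mu$; set $h:=g\circ f$. Then $h[\mu]=\mu$, and since each $h^{-1}[m]$ is a finite union of $Q_n$'s, every selector of $\langle h^{-1}[m]\rangle_m$ is also a selector of $\langle Q_n\rangle_n$, hence has $\mu$-measure $0$.

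\emph{Dynamical analysis.} Consider the equivalence relation $\sim$ from (\ref{equivalence}). In each class either the forward orbits merge into a periodic cycle, or the class is acyclic with a unique one-sided infinite ``backbone'' $b_0^C,b_1^C,\ldots$ satisfying $h(b_i^C)=b_{i+1}^C$, into which every forward orbit in the class merges. The cyclic case is $\mu$-negligible: the set $F_p=\{k\colon h^p(k)=k\}$ is a selector of $\langle(h^p)^{-1}[m]\rangle_m$, and every selector of the latter is a selector of $\langle h^{-1}[m]\rangle_m$ (as each $(h^p)^{-1}[m]$ refines into a disjoint union of $h^{-1}[m']$'s), so $\mu(F_p)=0$ for each $p$. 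On an acyclic class $C$, put
\[
\phi_C(k):=j_C(k)-d_C(k),
\]
where $d_C(k)$ is the least $d\ge 0$ with $h^d(k)\in\{b_i^C\}_i$ and $b^C_{j_C(k)}=h^{d_C(k)}(k)$; a short case check (split by whether $k$ and $h(k)$ lie on the backbone) yields $\phi_C(h(k))=\phi_C(k)+1$.

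\emph{Assembly.} The decisive step, and the main obstacle, is gluing the per-class $\phi_C$'s into a single finite-to-one $\phi\colon\omega\to\omega$ taking values in $\omega$ rather than $\mathbb{Z}$, and preserving the shift relation on a $\mu$-full set. Each $\phi_C$ is $\mathbb{Z}$-valued; translate by a class-dependent $N_C\in\mathbb{Z}$ so that $\phi_C+N_C\ge 0$ whenever $\phi_C$ is bounded below on $C$. For classes where $\phi_C$ is unbounded below (``deep preimage trees'' hanging off the early backbone), I would argue the union of such classes is $\mu$-null, by using a K\"onig-style selection along the deep branches to produce from any positive-measure chunk a selector of $\langle h^{-1}[m]\rangle_m$ of positive measure, contradicting the no-positive-selector property inherited from $\langle Q_n\rangle_n$. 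Choose the $N_C$'s sufficiently sparse so that each fiber $\phi^{-1}[n]$ receives only finitely many contributions across classes, and define $\phi$ arbitrarily on the $\mu$-null exceptional set; the resulting $\phi$ is finite-to-one and satisfies $\phi(h(k))=\phi(k)+1$ on a set of full $\mu$-measure, hence $\nu=\phi[\mu]\leq_{RB}\mu$ is shift-invariant.
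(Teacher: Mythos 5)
Your overall strategy is the right one and matches the paper's in spirit: extract a finite-to-one $h$ with $h[\mu]=\mu$ all of whose selectors are null, and then build a finite-to-one $\phi$ with $\phi(h(k))=\phi(k)\pm 1$ $\mu$-a.e., so that $\phi[\mu]$ is shift invariant. The extraction step and the cocycle identity $\phi_C(h(k))=\phi_C(k)+1$ are correct. But there is a genuine gap at the decisive point you yourself flag as "the main obstacle": the map $\phi_C(k)=j_C(k)-d_C(k)$ need not be finite-to-one \emph{even on a single class}, and no choice of translations $N_C$ can repair this. The level set $\phi_C^{-1}(n)$ collects, for \emph{every} backbone point $b_j^C$ with $j\geq n$, the (finitely many) points at depth $j-n$ above $b_j^C$; since the backbone is infinite, this union can easily be infinite (e.g.\ hang a chain of length $j$ off each $b_j^C$: all the chain endpoints land in $\phi_C^{-1}(0)$). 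Worse, each fiber $h^{-1}[m]$ lies entirely inside one level set of $\phi_C$, so the level sets are unions of whole $h$-fibers and cannot be shown null via the no-positive-selector property; you would only get $\nu\leq_{RK}\mu$, not $\leq_{RB}$. A secondary problem: discarding the periodic classes as "$\mu$-negligible" uses that $\bigcup_p\{k\colon h^p(k)=k\}$ is null, which is a countable union of null selectors — for a finitely additive measure this need not be null, so that set cannot simply be thrown away either.

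The paper's proof avoids both problems with one trick. Take a transversal $S$ of the equivalence classes (preferring periodic representatives) and let $\widehat{S}$ be the union of the forward orbits of its elements. Then $\widehat{S}$ is a single set which is (a union of at most two) selector(s) of $\langle h^{-1}[m]\rangle_m$, hence $\mu(\widehat{S})=0$ with no appeal to countable additivity, and $\widehat{S}$ is an attractor: every forward orbit enters it. Now \emph{rewire} $h$ on the null set $\widehat{S}$ only, enumerating $\widehat{S}=\{s_n\}$ and setting $\widehat{h}(s_{n+1})=s_n$, $\widehat{h}(s_0)=s_0$; this does not change $h[\mu]=\mu$, and now every point reaches $s_0$ in finitely many steps. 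Taking $\phi(k)$ to be the hitting time of $s_0$ gives $\phi\circ\widehat{h}=\phi-1$ (off $s_0$), and finiteness of the fibers is automatic because $\phi^{-1}[n]\subseteq\widehat{h}^{-n}[\{s_0\}]$ is an iterated preimage of a \emph{single point} under a finite-to-one map — precisely the feature your infinite-backbone height function lacks. If you want to salvage your argument, you need some analogue of this collapse of the forward dynamics onto one point; measuring height against an infinite forward backbone cannot work as written.
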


\begin{proof}
Let $f\colon \omega \to \omega$ be a finite-to-one function witnessing that $\mu$ is not Q$^+$-measure. By minimality, we may assume that $f[\mu] = \mu$. 

	Let $S$ be a selector of the equivalence relation (\ref{equivalence}). If possible we choose an element $s$ with $f^k(s)=s$ for some $k$.
	Now let $\widehat{S}=\{f^{k}(n):n\in S,\,k\in\omega\}$. Notice that $\widehat{S}$ is a selector for $f$ and so $\mu(\widehat{S})=0$. We can arbitrarily change values on $\widehat{S}$ while preserving the properties of $f$.
	
	Let $\widehat{f}$ be equal to $f$ on $\omega\setminus\widehat{S}$. Enumerate $\widehat{S}$ as $\{s_n:n\in\omega\}$ and let $\widehat{f}(s_{n+1})=s_n$ and $\widehat{f}(s_0)=s_0$.
	
	Notice, that $\widehat{f}$ is a finite-to one function with the following properties:
	\begin{itemize}
		\item $\widehat{f}[\mu]=\mu$,
		\item every selector for $\widehat{f}$ has measure 0,
		\item for every $n\in\omega$ there is $k$ such that $\widehat{f}^{(k)}(n)=s_0$. 
	\end{itemize} 
For each $n\in\omega$, let $k(n)$ denote the minimal $k$ as above.

For each $n$ the set $\{m:k(m)=n\}=k^{-1}[n]$ is finite and so $k$ is finite-to-one. We claim that $\nu=k[\mu]$ is shift invariant.

Indeed, notice that $k^{-1}[n+1]=\widehat{f}^{-1}[k^{-1}[n]]$. Thus for every $A\subseteq\omega$ we have
\[
\nu(A+1)=\mu(k^{-1}[A+1])=\mu\big(\widehat{f}^{-1}[k^{-1}[A]]\big)=\mu(k^{-1}[A])=\nu(A).
\]
\end{proof}

Compare the above theorem with the following proposition. We will say that a measure $\mu$ is \emph{nearly shift invariant} if there is $\nu \leq_{RB} \mu$ such that $\nu$ is shift invariant.

\begin{prop} \label{Q-not-shift}
	If $\mu$ is Q$^+$, then it is neither nearly shift invariant nor RB-above a density measure.\label{qnotdens}
\end{prop}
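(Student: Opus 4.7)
The plan is to reduce both assertions to the single fact that no free shift-invariant measure on $\omega$ is Q$^+$, via two quick observations.

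First, being Q$^+$ is RB-hereditary downward: if $\nu = f[\mu]$ with $f$ finite-to-one and $\mu$ is Q$^+$, then for any partition $\langle P_n\rangle_n$ of $\omega$ into finite sets, $\langle f^{-1}[P_n]\rangle_n$ is again a partition into finite sets, so Q$^+$ of $\mu$ produces a selector $T$ with $\mu(T) > 0$; then $f[T]$ is a selector of $\langle P_n\rangle_n$ with $\nu(f[T]) = \mu(f^{-1}[f[T]]) \geq \mu(T) > 0$. Second, every density measure is shift invariant on a free $\mu$: since $\bigl||(A+1)\cap n| - |A\cap n|\bigr|\leq 1$ and $\mu$ vanishes on points, $|d_\mu(A+1) - d_\mu(A)| \leq \int 1/n\,d\mu = 0$. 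Hence ``RB-above a density measure'' implies ``nearly shift invariant,'' so it suffices, by RB-heredity, to show that no free shift-invariant $\nu$ is Q$^+$.

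The crucial ingredient is the inequality $\nu(S) \leq d^*(S)$ for a shift-invariant free $\nu$, where $d^*(S) = \inf_k \sup_m |S\cap[m,m+k)|/k$ is the upper Banach density. Averaging over translates yields, for every $k$,
\[
\nu(S) \;=\; \frac{1}{k}\sum_{j=0}^{k-1}\nu(S+j) \;=\; \int \frac{1}{k}\sum_{j=0}^{k-1}\chi_{S+j}\,d\nu \;\leq\; \sup_m \frac{|S\cap[m,m+k)|}{k},
\]
and taking the infimum over $k$ gives $\nu(S) \leq d^*(S)$.

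Now fix any partition $\langle I_n\rangle_n$ of $\omega$ into consecutive intervals with $|I_n|\to\infty$ (for instance $|I_n|=n+1$). I claim every selector $S$ has $d^*(S)=0$: given $\varepsilon>0$, pick $k_0 > 2/\varepsilon$ and $N$ with $|I_n|\geq k_0$ for $n\geq N$; any window of length $k \geq k_0$ whose left endpoint is past $\min I_N$ meets at most two of the blocks $I_n$, hence contains at most two elements of $S$, while the finitely many small blocks before $I_N$ introduce only an $O(1/k)$ correction. So $\sup_m |S\cap[m,m+k)|/k \leq \varepsilon$ for all sufficiently large $k$, giving $d^*(S)=0$ and hence $\nu(S)=0$ for every selector $S$, contradicting Q$^+$. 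No step is genuinely difficult; the only care is the edge-effect bookkeeping in the Banach-density estimate, which the uniform $\varepsilon$-argument handles.
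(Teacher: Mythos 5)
Your proof is correct in substance but follows a genuinely different route from the paper's. The paper also begins with RB-heredity of Q$^+$, but then treats the two cases separately and concretely: for the density case it fixes the dyadic partition $I_n=[2^n,2^{n+1})$ and observes that any selector $S$ satisfies $|S\cap k|<\log_2(k)$, hence has asymptotic density $0$ and so measure $0$ under any density measure; for the shift-invariant case it thins the selector to $S'=\{s_{2k}\}$ so that $S', S'+1,\dots,S'+n$ are pairwise almost disjoint of equal measure, and derives $\mu\big(\bigcup_{k\le n}(S'+k)\big)>1$. You instead collapse the density case into the shift-invariant one by noting that every density measure is itself shift invariant (a clean reduction the paper does not make), and you replace the translate-packing argument by the general inequality $\nu(S)\le d^*(S)$ for shift-invariant $\nu$, where $d^*$ is upper Banach density. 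Your averaging identity is really the ``integrated'' form of the paper's packing argument, but stating it as a standalone lemma buys more: it applies to every set at once and to arbitrary interval partitions with $|I_n|\to\infty$, whereas the paper's computation is ad hoc for one partition. The paper's version is shorter and more elementary.

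One small slip: a window of length $k\ge k_0$ does \emph{not} meet at most two of the blocks $I_n$ of length $\ge k_0$ --- it can meet up to $\lfloor k/k_0\rfloor+1$ of them. This does not damage the argument: the count of selector points in the window is then at most $N+k/k_0+1$ (with $N$ accounting for the finitely many short initial blocks), so the ratio is at most $N/k+1/k_0+1/k$, which is below $\varepsilon$ for $k$ large once $k_0>3/\varepsilon$, and the conclusion $d^*(S)=0$ stands. Alternatively, since $d^*$ is an infimum over $k$, it suffices to take the single value $k=k_0$, for which your ``at most two blocks'' claim is actually true in the tail.
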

\begin{proof}First note that the Q$^+$ property is RB-hereditary, so it is enough to show that $\mu$ is neither shift invariant nor a density.
	
	Let $I_n=[2^n,2^{n+1})$ and let $S = \{s_0 < s_1 < \dots\}$ be a selector for $\langle I_n\rangle_n$ of positive measure. Notice that $|S\cap k| < \log_2(k)$ and so $S$ has density 0.
	
	Assume that $\mu$ is shift invariant. Let $S' = \{s_{2k}\colon k\in \omega\}$ and assume (throwing out the first element of $S$ if necessary) that $\mu(S')>0$. Take $n$ such that $\mu(S')>\frac{1}{n}$. Notice that the sets $S',S'+1,S'+2,\cdots S'+n$ are pairwise almost disjoint and of the same measure. Thus $\mu\big(\bigcup_{k=0}^{n} (S'+k)\big)>1$ which is a contradiction.
\end{proof}

\begin{fact}
	If $\mu$ is a Q-measure and $\nu$ is RB-minimal nearly shift invariant measure, then $\lambda=\alpha\mu +(1-\alpha)\nu$ is RB-minimal for any $\alpha\in[0,1]$.
\end{fact}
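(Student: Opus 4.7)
The endpoints $\alpha\in\{0,1\}$ follow from the hypotheses (at $\alpha=1$ one invokes Theorem~\ref{thm:q_measures_minimal_rb}), so I will fix $\alpha\in(0,1)$ and a finite-to-one $f\colon\omega\to\omega$, and aim to construct a finite-to-one $g$ with $g[f[\lambda]]=\lambda$. By linearity of the pushforward it will suffice to find a single $g$ satisfying $g[f[\mu]]=\mu$ and $g[f[\nu]]=\nu$ simultaneously, since then $g[f[\lambda]]=\alpha g[f[\mu]]+(1-\alpha)g[f[\nu]]=\lambda$.

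My first step exploits the Q-property of $\mu$. Writing $T=f[\omega]$, I choose a selector $S=\{s_n:n\in T\}$ of the partition $\langle f^{-1}(n)\rangle_{n\in T}$ with $\mu(S)=1$. The finite-to-one projection $\pi\colon\omega\to S$ defined by $\pi(k)=s_{f(k)}$ then satisfies $\pi[\mu]=\mu$, because $\mu$ is concentrated on $S$ and $\pi$ is the identity on $S$. Setting $g_0(n)=s_n$ for $n\in T$ yields a finite-to-one map with $g_0\circ f=\pi$, hence $g_0[f[\mu]]=\mu$ automatically; the issue is that typically $g_0[f[\nu]]=\pi[\nu]\ne\nu$, so $g_0$ alone is not enough.

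The second step corrects the $\nu$-side using RB-minimality of $\nu$: since $\pi$ is finite-to-one, $\pi[\nu]\leq_{RB}\nu$, and RB-minimality produces a finite-to-one $h\colon S\to\omega$ with $h[\pi[\nu]]=\nu$. The principal obstacle, and the delicate heart of the proof, is to choose such an $h$ so that additionally $h[\mu]=\mu$. My plan is to combine two sources of freedom: $h$ may be modified on any $\pi[\nu]$-null subset of $S$ without disturbing $h[\pi[\nu]]=\nu$, while the Q-property of $\mu$ supplies full-$\mu$-measure selectors of any finite-fibered partition of $S$, in particular those naturally arising from $h$. Here the near-shift-invariance of $\nu$ is essential: since $\pi[\nu]$ is RB-equivalent to $\nu$, it is itself nearly shift invariant and hence, by Proposition~\ref{Q-not-shift}, not Q${}^+$. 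This lets me locate subsets of $S$ that are full for $\mu$ yet $\pi[\nu]$-null, on which to reroute $h$ into a full-$\mu$-selector without damaging $h[\pi[\nu]]=\nu$, thereby enforcing $h[\mu]=\mu$.

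Once such an $h$ is built, I finish by setting $g=h\circ g_0$ on $T$ and extending $g$ to a finite-to-one map on $\omega\setminus T$ (which is $f[\lambda]$-null). Then $g\circ f=h\circ\pi$, and
\[
g[f[\lambda]]=h[\pi[\lambda]]=\alpha\,h[\pi[\mu]]+(1-\alpha)\,h[\pi[\nu]]=\alpha\,h[\mu]+(1-\alpha)\nu=\lambda,
\]
exhibiting $\lambda\leq_{RB}f[\lambda]$ and completing the proof that $\lambda$ is RB-minimal.
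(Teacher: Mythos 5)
Your argument is correct and rests on the same two pillars as the paper's proof: RB-minimality of each summand supplies a finite-to-one map fixing that summand individually, and the tension between the Q-property of $\mu$ and the failure of the Q$^+$-property for the nearly shift invariant measure $\nu$ (Proposition \ref{qnotdens}) produces a set of full $\mu$-measure and zero $\nu$-measure on which the two maps can be glued. The one place you stop short is precisely what you call the ``delicate heart'': having located $X\subseteq S$ with $\mu(X)=1$ and $\pi[\nu](X)=0$ (a $\mu$-full selector of a critical partition witnessing that $\pi[\nu]$ is not Q$^+$), the right move is not to ``reroute $h$ into a full-$\mu$-selector'' --- making the image of $h$ a selector does nothing by itself for $h[\mu]=\mu$ --- but simply to redefine $h$ to be the identity on $X$ while keeping the old $h$ on $S\setminus X$. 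This keeps $h$ finite-to-one, leaves $h[\pi[\nu]]=\nu$ untouched because $\pi[\nu](X)=0$, and forces $h[\mu]=\mu$ because $\mu$ concentrates on $X$, where $h$ is the identity. The paper assembles the same ingredients in the opposite order, which is leaner: it first replaces $f$ so that $f[\nu]=\nu$ (RB-minimality of $\nu$), then takes $g'$ with $g'[f[\mu]]=\mu$ (since $f[\mu]$ is again a Q-measure, Proposition \ref{thm:q_measures_minimal_rb} applies), picks $S$ with $f[\mu](S)=1$ and $\nu(S)=0$, and sets $g=g'\restr_S\cup \mathrm{id}_{\omega\setminus S}$; this needs no second correction and no analogue of your $\pi$ and $g_0$, which in effect re-prove Proposition \ref{thm:q_measures_minimal_rb} inline rather than citing it. So: same idea, a slightly longer route, and one step that you should actually write out rather than announce.
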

\begin{proof}
	Given a finite-to-one function $f$, we have to find a finite-to-one $g$ such that $g\circ f[\mu]=\mu$ and $g\circ f[\nu]=\nu$. We may assume that we already have $f[\nu]=\nu$. We can find $S$ such that $f[\mu](S)=1$ and $\nu(S)=0$ (pick a partition witnessing that $\nu$ is not Q$^+$ and a selector $S$ witnessing that $f[\mu]$ is Q). If $g'$ is such that $g'\circ f[\mu]=\mu$ then let $g=g'\restr_S \cup id_{\omega\setminus S}$. Clearly, $g$ is as desired.
\end{proof}
The above fact inspires a decomposition theorem for RB-minimal measures. Say that a measure $\mu$ is \emph{rigid} if there is no way to decompose $\mu = \alpha\mu_0 + (1-\alpha)\mu_1$, with $\mu_0$ nearly shift invariant and $\alpha>0$.  For a $\mu$-positive set $A\subseteq\omega$ we put $\mu\restr_A(B)=\mu(A\cap B)/\mu(A)$.

\begin{prop}\label{decomposition} Every measure $\mu$ can be decomposed into $\mu = \alpha\mu_0 + (1-\alpha)\mu_1$, where $\mu_0$ is a countable convex combination of nearly shift invariant measures, $\mu_1$ is rigid and $\alpha\in[0,1]$. If $\mu$ is additionally a P-measure,
	then we may assume that $\mu_0$ is nearly shift invariant.
\end{prop}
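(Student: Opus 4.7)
The plan is to extract nearly shift invariant pieces from $\mu$ recursively, each time grabbing at least half of the currently available extractable weight. Set $\tau_0 = \mu$ and inductively define
\[
\alpha_{n+1} = \sup\{\alpha : \tau_n = \alpha \sigma + (1-\alpha)\rho \text{ with } \sigma \text{ nearly shift invariant}\}.
\]
If $\alpha_{n+1}=0$, stop; otherwise fix $\tau_n = \alpha'_{n+1}\sigma_{n+1} + (1-\alpha'_{n+1})\tau_{n+1}$ with $\sigma_{n+1}$ nearly shift invariant and $\alpha'_{n+1} \geq \alpha_{n+1}/2$. Unrolling the recursion yields
\[
\mu = \sum_{k\leq N} w_k \sigma_k + W_N \tau_N, \quad w_k = \alpha'_k \prod_{i<k}(1-\alpha'_i), \quad W_N = \prod_{i\leq N}(1-\alpha'_i).
\]

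Let $W^* = \lim_N W_N$. For every $A\subseteq \omega$ the identity $\tau_N(A) = (\mu(A) - \sum_{k\leq N} w_k \sigma_k(A))/W_N$ gives, when $W^*>0$, a pointwise limit $\tau_\infty(A) = \lim_N \tau_N(A)$, defining a free finitely additive probability measure (finite additivity is preserved under pointwise limits). With $\alpha = 1-W^* = \sum_k w_k$, $\mu_0 = \alpha^{-1}\sum_k w_k \sigma_k$ and $\mu_1 = \tau_\infty$ we get $\mu = \alpha\mu_0 + (1-\alpha)\mu_1$, and $\mu_0$ is visibly a countable convex combination of nearly shift invariant measures. (If $W^*=0$ put $\alpha=1$ and take for $\mu_1$ any free rigid measure, e.g.\ $\delta_{\mathcal{U}}$ for a free ultrafilter $\mathcal{U}$; such Dirac measures are rigid since they admit no nontrivial convex decomposition and no free ultrafilter is shift invariant.)

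The central step is that $\mu_1 = \tau_\infty$ is rigid. Suppose for contradiction that $\tau_\infty = \beta\rho_0 + (1-\beta)\rho_1$ with $\rho_0$ nearly shift invariant and $\beta>0$. The identity $\tau_N = \sum_{k>N}(w_k/W_N)\sigma_k + (W^*/W_N)\tau_\infty$ (obtained by letting $M\to\infty$ in the finite unrolling from stage $N$ to $M$) exhibits $\rho_0$ as a summand of $\tau_N$ of weight $\beta W^*/W_N$, so $\alpha_{N+1} \geq \beta W^*/W_N \to \beta>0$. On the other hand $W^*>0$ forces $\sum_k \alpha'_k$ to converge, hence $\alpha'_k \to 0$; combined with $\alpha'_k \geq \alpha_k/2$ this yields $\alpha_k \to 0$, contradicting the above lower bound.

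For the P-measure refinement, the aim is to replace the countable combination in $\mu_0$ by a single nearly shift invariant measure. The strategy is to use the P-property of $\mu$ to construct a single finite-to-one map $g\colon\omega\to\omega$ that simultaneously witnesses near shift invariance of every $\sigma_k$, so that $g[\mu_0] = \sum_k (w_k/\alpha) g[\sigma_k]$ is a convex combination of shift invariant measures, hence shift invariant, making $\mu_0$ nearly shift invariant. The construction of $g$ proceeds by a diagonal argument across the countable family of witnesses $f_k$: one encodes, by a descending sequence of $\mu$-full-measure sets, the fiber-structure constraints needed for $g[\sigma_k]$ to be shift invariant; a pseudo-intersection produced by the P-property of $\mu$ yields a full-measure set on which $g$ can be defined to be globally finite-to-one and to satisfy all constraints simultaneously. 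The principal obstacle, and the point where the P-property of $\mu$ is indispensable, is precisely this simultaneous uniformization: keeping $g$ finite-to-one while arranging $g[\sigma_k]$ to be shift invariant for every $k$ at once, which is impossible in general without the pseudo-intersection supplied by P-measure-ness.
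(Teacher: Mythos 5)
Your first part is correct. The greedy exhaustion (extract at each stage a nearly shift invariant piece of weight at least half the supremum $\alpha_{n+1}$, then show the residual $\tau_\infty$ is rigid because $W^*>0$ forces $\alpha'_k\to 0$ while a nontrivial nearly shift invariant summand of $\tau_\infty$ would keep $\alpha_{N+1}$ bounded away from $0$) is a careful execution of exactly the ``standard argument'' the paper only sketches, and the edge case $W^*=0$ is handled correctly.

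The second part, however, has a genuine gap. You never construct the map $g$; you only assert that ``a descending sequence of $\mu$-full-measure sets'' encodes ``the fiber-structure constraints'' and that a pseudo-intersection resolves them. The obstruction is not finite-to-one-ness per se but compatibility: the measures $\sigma_k$ produced by your greedy scheme are abstract convex summands of $\mu$, not restrictions of $\mu$ to sets, and nothing forces them to live on (almost) disjoint parts of $\omega$. Two witnesses $f_j$ and $f_k$ may impose genuinely conflicting requirements on the same integers, and a single $g$ with $g[\sigma_k]$ shift invariant for all $k$ need not exist; a pseudo-intersection of full-measure sets does not arbitrate between incompatible functions. The paper avoids this entirely by building a \emph{different} decomposition for the P-measure case: take a maximal almost disjoint family $\langle X_n\rangle_n$ of $\mu$-positive sets with each $\mu\restr_{X_n}$ nearly shift invariant, witnessed by $f_n\colon X_n\to\omega$. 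Almost disjointness of the supports is what allows the $f_n$ to be glued into one finite-to-one $f$ (defined as the identity off $\bigcup_n X_n$), so that $f[\sum_n\mu(X_n)\,\mu\restr_{X_n}]$ is a convex combination of shift invariant measures. The P-property is used for something else entirely: to produce a set $Y$ almost disjoint from every $X_n$ with $\mu(Y)=1-\sum_n\mu(X_n)$, whose normalized restriction is rigid by maximality of the family. To repair your argument you would need either to reprove the first part with pieces of the form $\mu\restr_{X_n}$ on an almost disjoint family, or to supply an actual uniformization argument for your $\sigma_k$ --- and the latter is exactly the step that is missing.
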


\begin{proof} 
	The first part is proven by a standard argument: we can subsequently decompose $\mu$ using the lack of rigidity until we reach the 'rigid core' (possibly zero). Clearly, we can do it only countably many
	times. 
	
	For the second part consider a maximal pairwise almost disjoint family $\langle X_n\rangle_n$ of sets such that $\mu(X_n)>0$ and $\mu\restr_{X_n}$ is nearly shift invariant. For each $n$ let $f_n\colon X_n \to \omega$ be the function witnessing near shift invariance of $\mu\restr_{X_n}$. Since $\mu$ is a P-measure, there is $Y$ which is almost disjoint with each $X_n$ and such that $\mu(Y) = 1 - \sum_n \mu(X_n)$. By
	maximality $\mu_1 = \mu\restr_Y$ is rigid. Let $f\colon \omega \to \omega$ be defined by $f(k) = f_n(k)$ if $k\in X_n$ and $f(k)=k$ if $k\notin \bigcup_n X_n$. Then $f$ witnesses that $\mu_0 = \sum_n \mu(X_n)\cdot\mu\restr_{X_n}$ is nearly shift
	invariant.
\end{proof}

\begin{cor}\label{thm:RBMinDecomposition} If $\mu$ is RB-minimal, then $\mu = \alpha\mu_0 + (1-\alpha)\mu_1$, where $\mu_0$ is a countable convex combination of nearly shift invariant measures, $\mu_1$ is a Q-measure and $\alpha\in[0,1]$. If $\mu$ is additionally a P-measure, then we may assume that $\mu_0$ is nearly shift invariant.
\end{cor}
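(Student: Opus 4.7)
The plan is to deduce this directly from Proposition \ref{decomposition} and Theorem \ref{thm:RBMinDichotomy}, plus an extra argument promoting ``rigid'' to ``Q-measure'' for the non-trivial summand. I begin by applying Proposition \ref{decomposition} to write $\mu = \alpha\mu_0 + (1-\alpha)\mu_1$ with $\mu_0$ a countable convex combination of nearly shift invariant measures, $\mu_1$ rigid, and $\alpha \in [0,1]$; the P-measure refinement (namely that $\mu_0$ itself is nearly shift invariant) is already delivered by Proposition \ref{decomposition}. If $\alpha = 1$ the factor $\mu_1$ is irrelevant and there is nothing to prove. Assume therefore $\alpha < 1$; the task becomes showing that the rigid component $\mu_1$ is in fact a Q-measure.

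For contradiction, suppose $\mu_1$ is not Q, so there is a partition $\langle P_n\rangle_n$ of $\omega$ into finite sets with no selector of $\mu_1$-measure $1$. Define $f\colon\omega\to\omega$ by $f(k)=n$ for $k\in P_n$; this is finite-to-one. By RB-minimality of $\mu$, there is a finite-to-one $g$ with $(g\circ f)[\mu]=\mu$. Writing $\tilde f=g\circ f$, every selector of $\langle\tilde f^{-1}(n)\rangle_n$ is also a selector of the coarser partition $\langle P_n\rangle_n$ (the blocks of $\tilde f$ are finite unions of $P_i$'s), hence has $\mu_1$-measure strictly less than $1$. I then run the construction from the proof of Theorem \ref{thm:RBMinDichotomy}: pick a transversal $S$ of the equivalence relation generated by $\tilde f$, form the $\tilde f$-orbit $\hat S$ of $S$, and modify $\tilde f$ on $\hat S$ to obtain $\hat f$ with a single fixed point; the associated height function $k$ is finite-to-one, and the identity $k^{-1}[n+1]=\hat f^{-1}[k^{-1}[n]]$ gives that $k[\mu]$ is shift invariant, whence $k[\mu_1]$ is also shift invariant, contradicting rigidity of $\mu_1$.

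The main obstacle is justifying that the modification of $\tilde f$ on $\hat S$ does not change $\mu$, i.e.\ that $\mu(\hat S)=0$. In the proof of Theorem \ref{thm:RBMinDichotomy} this is free because $\mu$ itself is assumed not Q${}^+$. Here we only have that selectors of $\langle P_n\rangle_n$ have $\mu_1$-measure less than $1$, not necessarily $\mu$-measure $0$. The decisive step is therefore to reduce ``$\mu_1$ not Q'' to ``$\mu_1$ not Q${}^+$'' (selectors of $\mu_1$-measure $0$), and then to arrange that $\hat S$ is disjoint from $\mathrm{supp}(\mu_0)$ so that $\mu(\hat S)=\alpha\mu_0(\hat S)+(1-\alpha)\mu_1(\hat S)=0$; the latter disjointness should be achievable using the disjoint-supports structure guaranteed by Proposition \ref{decomposition} when choosing the witnessing partition. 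This ``not Q $\Rightarrow$ not Q${}^+$'' reduction for rigid RB-minimal measures is the genuinely non-routine ingredient; once it is in hand, the remainder is a direct adaptation of Theorem \ref{thm:RBMinDichotomy} restricted to the rigid summand.
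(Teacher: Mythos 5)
Your skeleton matches the paper's intended route (the paper's own proof is literally the one line ``this follows from Proposition~\ref{decomposition} and the proof of Theorem~\ref{thm:RBMinDichotomy}''), but what you have written is not yet a proof: you explicitly leave the decisive step open. To run the construction of Theorem~\ref{thm:RBMinDichotomy} you must redefine $\tilde f$ on the orbit set $\widehat S$ without disturbing $\tilde f[\mu]=\mu$, and for that you need $\mu(\widehat S)=0$. From ``$\mu_1$ is not a Q-measure'' you only get that every selector has $\mu_1$-measure strictly less than $1$, which gives neither $\mu_1(\widehat S)=0$ nor, a fortiori, $\mu(\widehat S)=0$. You name this obstacle yourself and then assert that the reduction from ``not Q'' to ``not Q$^+$'' for the rigid summand ``should be achievable''; that assertion is exactly the content that needs proving, so the argument as it stands establishes nothing beyond what Theorem~\ref{thm:RBMinDichotomy} already says.

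Two further steps in your proposed repair do not survive scrutiny. First, the disjoint-support structure you invoke is not ``guaranteed by Proposition~\ref{decomposition}'': its first part produces $\mu_0$ by repeatedly splitting off nearly shift invariant convex summands using the failure of rigidity, and these summands need not be restrictions $\mu\restr_{X}$ to pairwise disjoint sets; only the P-measure refinement works with an almost disjoint family $\langle X_n\rangle_n$ and a residual set $Y$. So in the general case you cannot arrange $\widehat S\cap\supp(\mu_0)=\emptyset$, and the identity $\mu(\widehat S)=\alpha\mu_0(\widehat S)+(1-\alpha)\mu_1(\widehat S)$ does not help you. Second, the final inference ``$k[\mu]$ is shift invariant, whence $k[\mu_1]$ is also shift invariant'' is a non sequitur: the hypothesis $\tilde f[\mu]=\mu$ obtained from RB-minimality of $\mu$ does not yield $\tilde f[\mu_1]=\mu_1$ for a convex summand $\mu_1$, and shift invariance of $k[\mu]=\alpha k[\mu_0]+(1-\alpha)k[\mu_1]$ does not pass to $k[\mu_1]$. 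Without that, you obtain at best that $\mu$ itself is nearly shift invariant (hence not rigid), which is consistent with $\mu_1$ being rigid and produces no contradiction. The argument therefore needs a genuinely different mechanism for transferring the Dichotomy construction from $\mu$ to its rigid component, and for upgrading Q$^+$ to Q there; neither is supplied.
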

\begin{proof} This is a corollary of Proposition \ref{decomposition} and the proof of Theorem \ref{thm:RBMinDichotomy}.
\end{proof}
\subsection{Construction of a shift invariant RB-minimal measure}

We are going to prove the following theorem.

\begin{thm}\label{thm:RBMinNotQ}
	Under CH there is a shift invariant RB-minimal measure. Consequently, under CH there is an RB-minimal measure that is not a Q$^+$-measure.
\end{thm}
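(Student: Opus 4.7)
The plan is to perform a Mokobodzki-style transfinite construction of length $\omega_1$ (cf.\ Subsection \ref{sec:Mokobodzki}) under CH, with two extra promises beyond the usual one of extending the functional to all of $\ell^\infty$: shift invariance and RB-minimality. The consequence that $\mu$ is not a Q$^+$-measure then follows immediately from Proposition \ref{Q-not-shift}.

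Under CH, I would enumerate $\ell^\infty$ as $\{x_\alpha : \alpha<\omega_1\}$ and all finite-to-one maps $\omega\to\omega$ as $\{f_\alpha : \alpha<\omega_1\}$, start with the Cesaro averages $c_n^0 = (e_1+\dots+e_n)/n$, and build a $\preceq^*$-increasing chain $\langle \langle c_n^\alpha\rangle_n : \alpha<\omega_1\rangle$ of finite-support convex combinations. At every stage I require that each $c_n^{\alpha+1}$ be a convex combination of $c_m^\alpha$'s with $m\geq n$. Because $|c_n^0(\chi_A)-c_n^0(\chi_{A+1})|\leq 1/n$ for every $A\subseteq\omega$ and this estimate is preserved under convex combinations supported above index $n$, the final functional $\varphi$ will satisfy $\varphi(\chi_A)=\varphi(\chi_{A+1})$ for every $A$, so $\mu(A)=\varphi(\chi_A)$ is automatically shift invariant.

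At each successor stage $\alpha+1$ I would maintain two promises: (i) $\lim_n c_n^{\alpha+1}(x_\alpha)$ exists (routine Bolzano--Weierstrass thinning), and (ii) there is a finite-to-one $g_\alpha\colon\omega\to\omega$ such that $\lim_n c_n^{\alpha+1}\big(\chi_{(g_\alpha\circ f_\alpha)^{-1}[A]}-\chi_A\big)=0$ for every test set $A$ already enumerated in the construction. Promise (ii) forces $(g_\alpha\circ f_\alpha)[\mu]=\mu$ in the limit, hence $\mu\leq_{RB}f_\alpha[\mu]$ for every finite-to-one $f_\alpha$, which is RB-minimality. To implement (ii) for a given $f_\alpha$: partition $\omega$ into consecutive blocks $B_j$, each consisting of all fibers of $f_\alpha$ intersecting a long interval $[N_{j-1},N_j)$ for a sufficiently rapidly growing $(N_j)$; thin $c_n^\alpha$ to a sub-convex-combination whose support is almost entirely a union $B_0\cup\dots\cup B_{K_n}$ (boundary fibers contribute negligible mass because fibers are finite and $N_j-N_{j-1}$ grows); and choose $g_\alpha$ so that $g_\alpha\circ f_\alpha$ restricted to each $B_j$ is a finite-to-one self-map of $B_j$ whose push-forward of the uniform distribution on $B_j$ matches the uniform distribution up to an error $\varepsilon_j\to 0$ (a cyclic- or tiling-type surjection of $B_j$ onto itself suffices). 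Combined with the near-uniformity of $c_n^{\alpha+1}$ on its supporting blocks, this yields the required vanishing of differences. At limit stages one diagonalizes in the standard Mokobodzki manner.

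The principal obstacle is the tension between shift invariance and the fiber-respecting thinning that produces $g_\alpha$: the block gaps $N_j-N_{j-1}$ must be chosen to grow faster than the $f_\alpha$-fiber sizes so that the $1/n$ shift-error bound is not destroyed by the thinning, and the finitely many new constraints imposed at stage $\alpha+1$ to secure (ii) must leave enough freedom to satisfy all countably many future demands at stages $\beta>\alpha$. These compatibility issues can be managed because the $\preceq^*$-chain condition only ever adds new convex-combination constraints and never revises previously defined limit values. Once the construction terminates at $\omega_1$, the resulting $\mu$ is shift invariant and RB-minimal, and therefore not a Q$^+$-measure by Proposition \ref{Q-not-shift}.
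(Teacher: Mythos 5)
Your overall architecture is the same as the paper's (a Mokobodzki-style $\preceq^*$-increasing chain of finite-support convex combinations under CH, starting from the Ces\`aro averages so that the $1/n$ shift-error bound survives all later refinements and yields shift invariance, plus the reduction to Proposition \ref{Q-not-shift} at the end). The gap is in the one step that actually carries the theorem: the construction of $g_\alpha$. You propose to choose $g_\alpha$ so that $h=g_\alpha\circ f_\alpha$ is a block-wise self-map whose push-forward of the uniform (or of the current weights) on each block $B_j$ approximates that distribution up to $\varepsilon_j\to 0$. This is impossible in the relevant metric. To get $|c_n(x\circ h)-c_n(x)|$ small for \emph{every} $x=\chi_A$ with $\|x\|_\infty\le 1$, you need the push-forward $h[\nu]$ of the weight vector $\nu=\sum_k a_k\delta_k$ of $c_n$ to be close to $\nu$ in total variation. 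But $h$ is constant on each fiber of $f_\alpha$, so $h[\nu]$ restricted to $B_j$ is supported on at most as many points as there are fibers meeting $B_j$; if fibers have size $\ge 2$ (e.g.\ $f_\alpha^{-1}[n]=[2^n,2^{n+1})$) and $\nu$ is spread out on $B_j$, then $\|h[\nu]-\nu\|_{TV}$ is bounded away from $0$ no matter how $g_\alpha$ is chosen. Taking $A$ to be the (small) image $h[B_j]$ exhibits a set on which the two functionals disagree by a fixed amount. A secondary problem: your promise (ii) is only imposed ``for every test set $A$ already enumerated,'' i.e.\ countably many sets at stage $\alpha$; RB-minimality needs $(g_\alpha\circ f_\alpha)[\mu]=\mu$ on \emph{all} of $\mathcal{P}(\omega)$, including the sets enumerated at stages $\beta>\alpha$, so the estimate must be uniform in $x$ and must persist under all later refinements, as in condition (4) of Theorem \ref{RBminFunc}.

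The paper's mechanism for producing $g_\alpha$ is genuinely different and is the heart of the proof. First (Lemma \ref{almostId}) one passes to a subsequence $c'$ and chooses $g$ so that the support of $c'_n$ lies inside a single fiber $(g\circ f)^{-1}[\{n\}]$; then $c'_n(x\circ g\circ f)=x(n)$ \emph{exactly}, i.e.\ the operator $x\mapsto c'(x\circ g\circ f)$ is the identity. Second (Lemma \ref{almostInv}) one iterates, setting $c^{m+1}_n(x)=c^m_n(c'(x))$, so that precomposition with $g\circ f$ shifts the iteration index down by one, and then averages $d_n=\tfrac{1}{2^{n+1}}(c^0_n+\cdots+c^{2^{n+1}-1}_n)$; the difference $d_n(x)-d_n(x\circ g\circ f)$ telescopes to two boundary terms of size $2^{-n-1}\|x\|_\infty$. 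This gives the uniform bound $|d_n(x)-d_n(x\circ g\circ f)|<\|x\|_\infty/2^n$ for all $x$ simultaneously, which then passes to every $\psi\in conv(c^\alpha_n:n>k)$ and hence to $\varphi$. Without this averaging-of-iterates trick (or some substitute for it), the approximate invariance you need cannot be obtained by engineering $g_\alpha$ alone.
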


Later, we improve the measure to a P-measure and weaken the assumption to $\mathfrak{p}=\mathfrak{c}$. We will use the notation of Section \ref{sec:Mokobodzki}. In particular we will deal with sequences $\langle c_n \rangle_n$ of functionals on $\ell^\infty$ such that $\langle c_n \rangle_n \preceq \langle e_n
\rangle_n$. Recall that for such sequences each $c_n$ has finite support and $c_n(x)=r$ for any sequence constantly equal to $r$ on the support of $c_n$. For simplicity we will sometimes abbreviate $\langle c_n\rangle_n$ to $c$ and follow a similar rule for other sequences of functionals.  The reader may notice that the following lemmas are saying that some particular sets are dense in the order $\preceq^*$.

\begin{lem}
	\label{almostId}
	For any sequence of functionals $\langle c_n\rangle_n\preceq \langle e_n\rangle_n$ and a finite-to-one function $f$, there is a subsequence $\langle c'_n \rangle_n$ and a finite-to-one function $g$ such that
	\[(\forall {x\in \ell^\infty})\; c'_n(x\circ g\circ f)=x(n).\]
\end{lem}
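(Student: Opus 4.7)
The plan is to reformulate the conclusion as a combinatorial condition on the supports and then thin the sequence. Writing $c'_n = \sum_{k \in F_n} a^{(n)}_k e_k$ with $F_n = \mathrm{supp}(c'_n)$ finite and nonnegative weights summing to $1$, the desired identity $c'_n(x \circ g \circ f) = x(n)$ for every $x \in \ell^\infty$ is equivalent, by testing on indicators, to $g(f(k)) = n$ for every $k \in F_n$. Hence it suffices to produce a subsequence $\langle c'_n\rangle_n$ of $\langle c_n\rangle_n$ whose supports $F_n$ have pairwise disjoint $f$-images, and then to define $g$ to be constantly $n$ on $f[F_n]$.

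The subsequence will be chosen by a straightforward recursion that exploits two features. First, since $\langle c_n\rangle_n \preceq \langle e_n\rangle_n$, the support of $c_m$ is contained in $\{j : j \geq m\}$ for every $m$. Second, because $f$ is finite-to-one, the $f$-preimage of any finite set is finite. Suppose I have already chosen indices $k_1 < \cdots < k_{n-1}$ with the corresponding supports $F_i = \mathrm{supp}(c_{k_i})$ having pairwise disjoint $f$-images. The union $E_n = \bigcup_{i<n} f[F_i]$ is finite, so $f^{-1}[E_n]$ is finite; picking $k_n$ strictly greater than $k_{n-1}$ and than every element of $f^{-1}[E_n]$ forces $F_n = \mathrm{supp}(c_{k_n})$ to be disjoint from $f^{-1}[E_n]$, which gives $f[F_n] \cap E_n = \emptyset$, as required.

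With the subsequence in place, I define $g$ on the disjoint union $\bigcup_n f[F_n]$ by $g(j) = n$ for $j \in f[F_n]$, and extend $g$ to the complement $\omega \setminus \bigcup_n f[F_n]$ arbitrarily in a finite-to-one way (for instance, by the identity, or by any bijection from the complement into $\omega$). Then $g^{-1}(n)$ consists of $f[F_n]$ together with at most one further point, hence is finite, so $g$ is finite-to-one. The verification is immediate: for $k \in F_n$ we have $f(k) \in f[F_n]$, so $g(f(k)) = n$, and therefore
\[
  c'_n(x \circ g \circ f) = \sum_{k \in F_n} a^{(n)}_k \, x(g(f(k))) = \sum_{k \in F_n} a^{(n)}_k \, x(n) = x(n),
\]
since the weights sum to $1$.

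The only step requiring genuine care is the inductive selection of the subsequence, and it rests squarely on the finite-to-one hypothesis on $f$ combined with the support-localization afforded by $\langle c_n\rangle_n \preceq \langle e_n\rangle_n$. No further results from the paper are invoked.
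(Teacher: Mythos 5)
Your proof is correct and takes essentially the same route as the paper's: both use that $\supp(c_m)\subseteq[m,\infty)$ together with the finite-to-one-ness of $f$ to thin the sequence so that the sets $f[\supp(c'_n)]$ are pairwise disjoint, and then let $g$ send $f[\supp(c'_n)]$ to $n$. The only cosmetic difference is that the paper arranges the fibres $g^{-1}[\{n\}]$ to be consecutive intervals, which is not needed for the statement or for its later applications.
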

\begin{proof}
	We will inductively construct $g$ in such a way that $g^{-1}[\{n\}]$ are going to be consecutive intervals. Let $g^{-1}[\{0\}]$ be long enough so that the support of $c_0$ is contained in $(g\circ f)^{-1}[\{0\}]$ and put $c_0'=c_0$. For $n>0$ let
	$k_n=1+\max(g\circ f)^{-1}[\{0,1,\cdots n-1\}]$. Since $c\preceq e$, the support of $c_{k_n}$ is disjoint with $[0,k_n)$ and so also with $(g\circ f)^{-1}[\{0,1,\cdots n-1\}]$. Let $g^{-1}[\{n\}]$ be long enough so that the support of $c_{k_n}$ is contained in $(g\circ f)^{-1}[\{n\}]$ and put $c'_n=c_{k_n}$. 
	
	Since $\supp c_{k_n} \subseteq (g\circ f)^{-1}[\{n\}]$ and $x\circ g \circ f$ is constantly equal to $x(n)$ on $(g\circ f)^{-1}[\{n\}]$, we have that $c'_n(x\circ g \circ f)=x(n)$. 	
\end{proof}

\begin{lem}\label{almostInv}
	For every sequence of functionals $\langle c_n\rangle_n\preceq \langle e_n\rangle_n$ and a finite-to-one function $f$, there is a sequence of functionals $\langle d_n \rangle_n\preceq \langle c_n \rangle_n$  and a finite-to-one function $g$ such
	that
	\[(\forall {x\in \ell^\infty})\;|d_n(x)-d_n(x\circ g\circ f)| <\frac{\|x\|_\infty}{2^n}.\]
\end{lem}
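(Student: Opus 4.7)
I would build $\langle d_n\rangle_n$ and $g$ jointly by recursion on $n$, maintaining the invariant that at stage $n$ the function $g$ is defined on a finite set whose $f$-preimage lies below the supports to be used in later stages. Write each $c_m=\sum_j a^m_j\, e_j$ as its convex combination of point masses. At step $n$ I pick $k_n\geq n$ large enough that $\supp(c_m)$ sits above all points already used whenever $m\geq k_n$.

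The core of the construction is to choose, at each stage, a convex combination $d_n\in\mathrm{conv}(c_m:m\geq k_n)$ together with a subset $E_n\subseteq\supp(d_n)$ on which $f$ is injective and such that $\sum_{k\in E_n}a^{d_n}_k>1-2^{-(n+1)}$. Granting this, I set $g(f(k))=k$ for every $k\in E_n$, and at the end I extend $g$ to all of $\omega$ by distributing the leftover inputs among the already-used targets in a finite-to-one way. Then $g(f(k))=k$ on $E_n$, so for every $x\in\ell^\infty$,
\[
|d_n(x)-d_n(x\circ g\circ f)|\leq\sum_{k\in\supp(d_n)\setminus E_n} a^{d_n}_k\,\bigl|x(k)-x(g(f(k)))\bigr|\leq 2\|x\|_\infty\cdot 2^{-(n+1)}=\frac{\|x\|_\infty}{2^n}.
\]

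The technical obstacle is the existence of $d_n$ and $E_n$ with both the $f$-injectivity and the concentration properties. A single $c_m$ may put substantial mass on several elements of one $f$-fiber, in which case no subsequence of the $c_m$'s will help; the remedy is to average enough $c_m$'s with supports spread across many distinct $f$-fibers to dilute the mass concentrated on any single fiber, and then pick the largest-weight representative within each fiber. Quantifying how much averaging is needed to push the selector mass above $1-2^{-(n+1)}$ for an arbitrary $\langle c_m\rangle\preceq\langle e_m\rangle$ — and verifying that the freedom to choose $k_n$ large enough guarantees the spread over distinct $f$-fibers one needs — is the combinatorial crux of the argument and the place where I expect the real work to happen.
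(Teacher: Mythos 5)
Your reduction of the estimate to the existence of $d_n\in\mathrm{conv}(c_m:m\geq k_n)$ together with a set $E_n\subseteq\supp(d_n)$ on which $f$ is injective and which carries $d_n$-mass $>1-2^{-(n+1)}$ is where the argument breaks: such a pair need not exist, and the proposed remedy (averaging across many $f$-fibers and taking the largest-weight representative in each) does not produce one. Concretely, take $f$ with $f^{-1}[\{n\}]=[2^n,2^{n+1})$ and $c_n=2^{-n}\sum_{k=2^n}^{2^{n+1}-1}e_k$, so that $\langle c_n\rangle_n\preceq\langle e_n\rangle_n$ and each $c_n$ spreads its mass uniformly over a single $f$-fiber. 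Any $d=\sum_m\beta_m c_m\in\mathrm{conv}(c_m:m\geq k)$ assigns to each point of the fiber $f^{-1}[\{m\}]$ weight $\beta_m 2^{-m}$, so any set meeting each fiber at most once has $d$-mass at most $\sum_m\beta_m2^{-m}\leq 2^{-k}$. Averaging across fibers dilutes the per-fiber mass but does nothing to concentrate it on a transversal, so no choice of $k_n$ and no convex combination gets you anywhere near mass $1-2^{-(n+1)}$. In effect you are trying to prove the strictly stronger statement that $g\circ f$ can be made the identity on a set of large $d_n$-measure; the paper's own discussion preceding Theorem \ref{thm:RBMinDichotomy} (the fixed-point measure $\mu=d_\mu$ with this very $f$) shows this stronger statement is false.

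The paper avoids this by never asking $g\circ f$ to fix points of the support. It first applies Lemma \ref{almostId} to get a subsequence $\langle c'_n\rangle_n$ and a finite-to-one $g$ with $c'_n(x\circ g\circ f)=x(n)$ for all $x$; writing $c^0_n=c_n$ and, for $c^m_n=\sum_k\alpha_ke_k$, setting $c^{m+1}_n=\sum_k\alpha_kc'_k$, one gets $c^{m+1}_n(x\circ g\circ f)=c^m_n(x)$, so precomposition with $g\circ f$ acts as a shift on the tower $\langle c^m_n\rangle_m$. Taking $d_n$ to be the Ces\`aro average $2^{-(n+1)}\bigl(c^0_n+\cdots+c^{2^{n+1}-1}_n\bigr)$ makes the difference $d_n(x)-d_n(x\circ g\circ f)$ telescope to two boundary terms, each of size at most $2^{-(n+1)}\|x\|_\infty$. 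This averaging over the orbit of the substitution operator, rather than over fibers of $f$, is the idea your proposal is missing.
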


\begin{proof}
	Let $c'$ and $g$ be obtained from Lemma \ref{almostId}. Put $c^0_n=c_n$ for each $n$. Now let $c^{m+1}_n(x)=c^m_n(c'(x))$ i.e. if $c_n^m=\sum_{k=n}^{N}\alpha_ke_k$ then $c_n^{m+1}=\sum_{k=n}^{N}\alpha_kc'_k$. Therefore, we have that $c^{m+1}\preceq c'\preceq e$. In this way, and by Lemma \ref{almostId}, we ensure that $c^{m+1}_n(x\circ g \circ f)=c^m_n(x)$.

	Now let \[d_n=\frac{1}{2^{n+1}}\big(c_n^{0}+c_n^{1}+c_n^{2}+\cdots + c_n^{2^{n+1}-1}\big).\]
	Pick any $x\in \ell^\infty$. Notice that 
	\[d_n(x\circ g \circ f)=\frac{1}{2^{n+1}}\big(c_n^0(x\circ g \circ f)+c_n^0(x)+c_n^1(x)+\cdots +c_n^{2^{n+1}-2}(x)\big)\] 
	and so $|d_n(x)-d_n(x\circ g \circ f)|<(2^{-n-1}+2^{-n-1})\|x\|_\infty$, as $|c_n^m(x)|\leqslant \|x\|_\infty$.
\end{proof}
	
\begin{thm}\label{RBminFunc}
	Assume $CH$. For any sequence of functionals $\langle c_n\rangle_n\preceq \langle e_n\rangle_n$ there is a functional $\varphi$ in the intersection of $\{\overline{conv(c_n:n>k)}:k\in\omega\}$\footnote{\,$\overline{\,\cdot\,}$ denotes a closure in the topology of pointwise convergence}  such that for any finite-to-one $f$ there is a finite-to-one $g$ with
	\[(\forall x\in \ell^\infty)\;\;\varphi(x\circ g\circ f) = \varphi(x).\]
\end{thm}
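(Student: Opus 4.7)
The plan is a Mokobodzki-style transfinite construction of length $\omega_1$: under CH build a $\preceq^*$-decreasing tower $\langle c^\alpha\rangle_{\alpha<\omega_1}$ with $c^0=c$, and define $\varphi(x)=\lim_n c^{\alpha(x)}_n(x)$ where $\alpha(x)$ is the least ordinal for which the limit exists (Fact \ref{coherence} makes this coherent). The crucial feature we need from Lemma \ref{almostInv} is that its error bound $\|x\|_\infty/2^n$ is \emph{uniform in} $x$ and decays geometrically; this is what allows a single stage to take care of all of $\ell^\infty$ simultaneously for one finite-to-one $f$, and what makes the property survive convex combinations and subsequencing.

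Enumerate $\ell^\infty=\{x^\alpha:\alpha<\omega_1\}$ and the finite-to-one functions as $\{f^\alpha:\alpha<\omega_1\}$. At a successor stage $\alpha+1$, first apply Lemma \ref{almostInv} to $c^\alpha$ and $f^\alpha$ to obtain $d^\alpha\preceq c^\alpha$ and a finite-to-one $g^\alpha$ with
\[
|d^\alpha_n(x)-d^\alpha_n(x\circ g^\alpha\circ f^\alpha)|<\|x\|_\infty/2^n\quad\text{for all }x\in\ell^\infty;
\]
then thin $d^\alpha$ to a subsequence $c^{\alpha+1}$ along which $\lim_n c^{\alpha+1}_n(x^\alpha)$ exists (possible by compactness of $[-\|x^\alpha\|_\infty,\|x^\alpha\|_\infty]$). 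Subsequencing preserves $\preceq$ and only strengthens the bound above. At a limit stage $\gamma$, which is necessarily of cofinality $\omega$ under CH, fix a cofinal $\alpha_n\uparrow\gamma$ and pick $c^\gamma_n=c^{\alpha_n}_{k_n}$ with $k_n$ chosen large enough that $c^\gamma\preceq^* c^{\alpha_m}$ for every $m$ (standard diagonal argument using the $\preceq^*$-chain).

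For the verification, membership $\varphi\in\bigcap_k\overline{\mathrm{conv}(c_n:n>k)}$ is direct: for any $k$ and any finite $F\subseteq\ell^\infty$, choose $\alpha>k$ large enough that $\lim_n c^\alpha_n(x)$ exists for every $x\in F$; since $c^\alpha\preceq^* c$, cofinitely many $c^\alpha_n$ are convex combinations of $c_m$'s with $m>k$, and they approximate $\varphi$ on $F$. For the shift property, given a finite-to-one $f$ take $\beta$ with $f^\beta=f$ and let $g=g^\beta$. For any $x$ pick $\alpha>\beta$ with $\alpha\ge\alpha(x),\alpha(x\circ g\circ f)$. Since $c^\alpha\preceq^* c^{\beta+1}$, almost every $c^\alpha_n$ has the form $\sum_k\lambda_k c^{\beta+1}_{m_k}$ with $m_k\ge n$ and $\sum_k\lambda_k=1$, so
\[
|c^\alpha_n(x)-c^\alpha_n(x\circ g\circ f)|\le\sum_k\lambda_k\|x\|_\infty/2^{m_k}\le\|x\|_\infty/2^n,
\]
and letting $n\to\infty$ gives $\varphi(x)=\varphi(x\circ g\circ f)$.

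The main delicate point is ensuring the approximate invariance from Lemma \ref{almostInv} propagates through all later stages. This is where the geometric decay $1/2^n$ is indispensable: any convex combination indexed from $n$ upward keeps a $1/2^n$ bound, subsequencing only tightens it, and at limits the diagonal sequence inherits the estimate because $\preceq^*$ eventually expresses each $c^\gamma_n$ as a convex combination of earlier terms. Once this robustness is in place, the rest is bookkeeping: enumeration under CH, countable cofinality at limits, and Fact \ref{coherence} to extract a single well-defined functional $\varphi$.
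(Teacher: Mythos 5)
Your proposal is correct and follows essentially the same route as the paper: a $\preceq^*$-decreasing $\omega_1$-tower built under CH, with Lemma \ref{almostInv} handling one finite-to-one function per stage via the uniform $\|x\|_\infty/2^n$ bound, subsequencing for convergence, diagonalization at limits, and Fact \ref{coherence} to define $\varphi$. The only cosmetic differences (applying Lemma \ref{almostInv} before rather than after thinning, and verifying the invariance through explicit convex combinations of the $c^{\beta+1}_m$ instead of through $\varphi\in\overline{conv(c^\beta_n:n>k)}$) do not change the argument.
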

\begin{proof}
	Let $\langle x_\alpha:\alpha\in\omega_1\rangle,\langle f_\alpha:\alpha\in\omega_1\rangle$ enumerate $\ell^\infty$ and all finite-to-one functions in $\omega^\omega$.
	
	We construct a sequence $\langle c^\alpha\rangle_{\alpha\in\omega_1}$ with 
	\begin{enumerate}
		\item $c^\alpha \preceq c$ for any $\alpha<\omega_1$,
		\item $c^\alpha\preceq^* c^\beta$ for $\alpha>\beta$,
		\item $\lim_n c_n^\alpha(x_\alpha)$ exists,
		\item there is a finite-to-one $g_\alpha$ such that 
		\[(\forall x\in \ell^\infty)\;\;|c_n^\alpha(x\circ g_\alpha \circ f)-c_n^\alpha(x)|<\frac{\|x\|_\infty}{2^n}.\]
	\end{enumerate}
	First, for each $\alpha$, we define a sequence $\langle v^\alpha_n\rangle_n$ in the following way: Let $v_n^0=c_n$ and let $v_n^{\alpha}=c_n^\beta$ if $\alpha = \beta + 1$. If $\alpha$ is limit, pick $\langle\alpha_n\rangle_n$ a sequence of ordinals increasing to $\alpha$. Assuming we have defined $c^\xi$ for each $\xi < \alpha$ we put
	\[v^\alpha=c^{\alpha_n}_n.\]
	
	Notice, that in each case $ v^\alpha\preceq e$ and for any $\beta<\alpha$ we have $v^\alpha\preceq^*c^\beta$. Now, pick a subsequence $k_n$ such that $\lim_n v^\alpha_{k_n}(x_\alpha)$ exists (there is one as $\langle v_n^\alpha(x_\alpha) \rangle_n$ is a bounded sequence). Then, we apply Lemma \ref{almostInv} to $f_\alpha$ and the sequence $\langle v^\alpha_{k_n}\rangle_n$ to obtain $g_\alpha$ and $\langle c_n^\alpha\rangle_n$. This finishes our recursive construction.
		
	Finally, let $\varphi(x)=\lim_n c_n^\alpha(x)$ for the minimal $\alpha<\omega_1$ for which this limit exists. According to Fact \ref{coherence}, for any $\beta$ above such $\alpha$, we have $\varphi(x)=\lim_n c_n^\beta(x)$. Using (3) we know that
	$\varphi$ is defined on each element of $\ell^\infty$. A simple reflection argument shows that $\varphi$ is indeed a functional.
	
	We will show that $\varphi \in \bigcap\{\overline{conv(c_n:n>k)}:k\in\omega\}$: Pick any $x\in \ell^\infty$. There is $\alpha<\omega_1$ s.t. $x=x_\alpha$. Thus $\varphi(x)=\lim_n c_n^\alpha(x)$ and so for any $k\in\omega$, we have that $\varphi(x)\in \overline{conv(c_n^\alpha(x):n>k)}$. Hence $\varphi\in \overline{conv(c_n^\alpha: n>k)}\subseteq \overline{conv(c_n: n>k)}$.
	
	To see the last part, let $f$ be a finite-to-one function. There is $\alpha$ such that $f_\alpha=f$. Pick any $k\in \omega$. For every functional $\psi \in conv(c^\alpha_n:n>k)$  we have 
	\[|\psi(x\circ g_\alpha\circ f_\alpha)-\psi(x)|\leqslant\frac{\|x\|_\infty}{2^k}.\]
	So the same is true for $\varphi$ since $\varphi \in \overline{conv(c^\alpha_n:n>k)}$.	
\end{proof}

As a corollary we get Theorem \ref{thm:RBMinNotQ} promised at the beginning of this section:

\begin{cor}\label{RBminMeas}
	 (CH) There is an RB-minimal shift invariant measure extending the density.
\end{cor}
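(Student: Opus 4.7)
The plan is to apply Theorem~\ref{RBminFunc} to a density-like initial sequence $\langle c^0_n \rangle_n$ and to read off the three required properties of the resulting measure (extension of density, shift invariance, RB-minimality) from the corresponding functional.

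First, I would take $c_n^0 = \tfrac{1}{n}(e_n + e_{n+1} + \dots + e_{2n-1})$, so that $c_n^0 \in conv(e_m : m \geqslant n)$ (hence $\langle c_n^0 \rangle_n \preceq \langle e_n \rangle_n$) and $c_n^0(\chi_A) = |A \cap [n, 2n)|/n$. Applying Theorem~\ref{RBminFunc}, I obtain a functional $\varphi \in \bigcap_k \overline{conv(c_n^0 : n > k)}$ such that for every finite-to-one $f$ there is a finite-to-one $g$ with $\varphi(x \circ g \circ f) = \varphi(x)$ for every $x \in \ell^\infty$. I then set $\mu(A) = \varphi(\chi_A)$; since $\varphi$ is a normalised linear functional that is positive on characteristic functions, $\mu$ will be a finitely additive probability measure on $\omega$.

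Next I would verify that $\mu$ extends density and is shift invariant. Whenever $d(A) = r$ exists, a standard Ces\`aro computation gives
\[
   c_n^0(\chi_A) = \frac{|A \cap [n, 2n)|}{n} = 2 \cdot \frac{|A \cap 2n|}{2n} - \frac{|A \cap n|}{n} \longrightarrow r,
\]
so for every $\varepsilon>0$ all $c_n^0$ with $n$ large eventually map $\chi_A$ into $(r-\varepsilon, r+\varepsilon)$; this estimate is preserved by convex combinations and pointwise limits, so $\varphi(\chi_A) = r$ and $\mu(A) = d(A)$. For shift invariance, since $(A+1) \cap [n, 2n) = (A \cap [n-1, 2n-1)) + 1$, we get $|c_n^0(\chi_A) - c_n^0(\chi_{A+1})| \leqslant 1/n$; convex combinations preserve this uniform bound and it survives in the pointwise closure, giving $|\varphi(\chi_A) - \varphi(\chi_{A+1})| \leqslant 1/k$ for every $k$, hence $\mu(A+1) = \mu(A)$.

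Finally, RB-minimality is essentially built into the theorem: given any $\nu \leqslant_{RB} \mu$ witnessed by a finite-to-one $f$, so $\nu = f[\mu]$, Theorem~\ref{RBminFunc} yields a finite-to-one $g$ with $\varphi(x \circ g \circ f) = \varphi(x)$ for every $x$; specialising to $x = \chi_A$ and using $\chi_A \circ g \circ f = \chi_{(g \circ f)^{-1}[A]}$, this forces $(g \circ f)[\mu] = \mu$, hence $g[\nu] = \mu$, witnessing $\mu \leqslant_{RB} \nu$. The one mildly delicate point is the choice of initial sequence: the naive $c_n^0 = (e_1 + \dots + e_n)/n$ does not satisfy $\langle c_n^0 \rangle_n \preceq \langle e_n \rangle_n$, so a moving-average variant as above is required, and one must check that it still computes asymptotic density in the limit; once this is in place, the corollary becomes a direct translation of Theorem~\ref{RBminFunc} into the language of measures.
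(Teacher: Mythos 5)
Your proof is correct and follows essentially the route the paper intends: Corollary \ref{RBminMeas} is stated there as an immediate consequence of Theorem \ref{RBminFunc}, obtained by feeding in a density-computing starting sequence and reading off extension of density, shift invariance, and RB-minimality exactly as you do. Your observation that the naive averages $(e_1+\dots+e_n)/n$ violate the hypothesis $\langle c^0_n\rangle_n \preceq \langle e_n\rangle_n$ (their supports always meet $[0,n)$, which would break the support-separation step in Lemma \ref{almostId}) and your replacement by the moving averages $\tfrac{1}{n}(e_n+\dots+e_{2n-1})$, together with the check that these still compute asymptotic density, is a genuine and necessary refinement of the paper's informal suggestion in Section \ref{sec:Mokobodzki}.
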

\begin{rem}
	Let $\mu$ be a RB-minimal shift invariant measure. Let $f$ be finite-to-one. If the lengths of intervals contained in $f^{-1}[\{n\}]$ are not bounded, then by argument as in Proposition \ref{qnotdens} we have that there is no
	$\mu$-positive selector for $\langle f^{-1}[\{n\}] \rangle$. In particular, if $g\circ f[\mu]=\mu$, then $\mu( \mathrm{Fix}(g\circ f))=0$. Thus, there is a $\mu$-positive set $A$ with $g\circ f[A]\cap A=\emptyset$. However $g\circ f$ in some sense preserves the structure of $\mu$ on subsets of $A$. 
	
	What this shows is that while Q-measures obtain RB-minimality by ignoring a lot of the domain (having 'rare' sets of measure 1), this $\mu$ achieves the same by having a lot of nontrivial inner 'symmetries'. 
\end{rem}

Now, we will show that a simple modification of the argument above yields a similar result for the Rudin-Keisler order; the main idea is to add a restriction to additionally ask that the resulting measure is a P-measure.

In the following, if $c$ is a functional in $\ell^\infty$ and $A\subseteq \omega$, then $c(A)$ means $c(\chi_A)$.

\begin{lem} Suppose $\langle c_n\rangle_n \preceq \langle e_n\rangle_n$ is a sequence with pairwise disjoint supports and let $\langle A_k\rangle_k$ be a partition of $\omega$ such that $r_k = \lim_n c_n(A_k)$ exists for every $k$. Then, there is $P\subseteq \omega$ a pseudo-union of $\langle A_k\rangle_k$ such that $\lim_i c_n(P) = \sum_k r_k$.
\end{lem}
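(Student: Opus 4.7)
The plan is to build $P$ as $\omega \setminus F$, where $F = \bigcup_n F_n$ and each $F_n \subseteq B_n := \supp c_n$ is chosen so that (i) $F \cap A_k$ is finite for every $k$ (so that $A_k \subseteq^* P$, i.e. $P$ is a pseudo-union), and (ii) $c_n(F) \to 1 - s$, writing $s = \sum_k r_k$. The key structural fact I would exploit is that the supports $B_n$ are pairwise disjoint, so for every $X \subseteq \omega$ one has $c_n(X) = \sum_{j \in B_n \cap X} a_{n,j}$, and in particular $\sum_k c_n(A_k) = c_n(\omega) = 1$ is a finite sum for each $n$. Condition (ii) then becomes the natural identity $c_n(P) \to s$.

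The candidate I would try is
\[ F_n = B_n \cap \bigcup_{k > K(n)} A_k \]
for some function $K(n) \to \infty$ to be chosen. Pairwise disjointness of the $B_n$'s gives $F \cap B_n = F_n$, hence $c_n(F) = c_n(F_n) = \sum_{k > K(n)} c_n(A_k)$ and therefore $c_n(P) = \sum_{k \leq K(n)} c_n(A_k)$. Moreover, as soon as $K(n) \geq k$, $F_n$ is disjoint from $A_k$, so $F \cap A_k \subseteq \bigcup_{n : K(n) < k}(B_n \cap A_k)$ is a finite union of finite sets and thus finite.

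It then suffices to pick $K(n)$ increasing to infinity slowly enough that $\sum_{k \leq K(n)} c_n(A_k) \to s$. Here I would use that for each fixed $K$, $\sum_{k \leq K} c_n(A_k) \to s_K := \sum_{k \leq K} r_k$ as $n \to \infty$, being a finite sum of convergent sequences. Choose therefore $N_0 < N_1 < \cdots$ with
\[ \Big|\sum_{k \leq K}(c_n(A_k) - r_k)\Big| < 2^{-K} \quad \text{whenever } n \geq N_K, \]
and set $K(n) = \max\{K : N_K \leq n\}$. Then $K(n) \to \infty$ and
\[ |c_n(P) - s| \leq \Big|\sum_{k \leq K(n)}(c_n(A_k) - r_k)\Big| + |s_{K(n)} - s| \to 0. \]

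I do not anticipate a real obstacle. The one thing that requires a moment's thought is seeing that pairwise disjointness of the supports $B_n$ is doing double duty: it turns $\sum_k c_n(A_k) = 1$ into a finite sum for every $n$, and it lets the tail cut $\bigcup_{k > K(n)} A_k$ move with $n$ without spoiling the pseudo-union condition $A_k \subseteq^* P$. After that observation, the rate-of-growth argument for $K(n)$ is a standard diagonal.
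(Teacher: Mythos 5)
Your proof is correct and follows essentially the same route as the paper: both constructions put $A_0\cup\dots\cup A_{K(n)}$ on the support of $c_n$ for a cutoff $K(n)\to\infty$ chosen (via thresholds $N_K$ with error $<2^{-K}$) to grow slowly enough that $c_n(P)=\sum_{k\le K(n)}c_n(A_k)\to\sum_k r_k$, with disjointness of the supports guaranteeing both the computation of $c_n(P)$ and the pseudo-union property. The paper merely packages the cutoff as blocks $C_k=\bigcup_{m=n_k}^{n_{k+1}-1}\supp c_m$ rather than as a function $K(n)$; the content is identical.
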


\begin{proof}
	Let $\langle n_k\rangle_k$ be an increasing sequence such that, for any $m\geqslant n_k$, we have
	\[\big|c_m(A_0\cup\dots\cup A_k)-(r_0+\dots+r_k)\big|<2^{-k}\]
	and define $C_k=\bigcup_{m=n_k}^{n_{k+1}-1}\supp c_m$.
	Finally let \[ P = \bigcup_k\Big(C_k\cap (A_0 \cup \dots \cup A_k) \Big) \cup \Big(\omega\setminus\bigcup_n \mathrm{supp}(c_n)\Big).  \]
	Clearly $P$ is a pseudo-union of $\langle A_k\rangle_k$ and for $n_k\leqslant m<n_{k+1}$ we have $c_m(P)=c_m(A_0\cup\dots\cup A_k)$.
	
	Fix $\varepsilon>0$ and take $M$ such that $2^{-M} + \sum_{m>M} < \varepsilon$. It is now easy to see that for any $m>n_M$ we have 
	\[\Big|c_m(P)-\sum_kr_k\Big| < \varepsilon.\]
\end{proof}

\begin{thm}\label{RKminMeas} (CH) There is a shift invariant RK-minimal measure.
\end{thm}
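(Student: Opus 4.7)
The plan is to strengthen the construction in Theorem~\ref{RBminFunc} (and hence Corollary~\ref{RBminMeas}) by folding one more family of dense promises into the recursion, namely that the limit functional is a P-measure. Once this is achieved, by Fact~\ref{fact:collbelowp} every RK-reduction below $\mu$ is automatically an RB-reduction, so RB-minimality upgrades to RK-minimality.

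Concretely, I keep $c_n^0 = (e_1+\dots+e_n)/n$ (to inherit shift invariance as in Corollary~\ref{RBminMeas}) and enumerate, in addition to $\ell^\infty = \{x_\alpha : \alpha < \omega_1\}$ and the finite-to-one functions $\{f_\alpha : \alpha<\omega_1\}$, all partitions of $\omega$ as $\{\langle A_k^\alpha \rangle_k : \alpha<\omega_1\}$. At stage $\alpha$, starting from $v^\alpha$ built out of the previous stages as in Theorem~\ref{RBminFunc}, I first pass to a subsequence $\langle v^\alpha_{k_n} \rangle_n$ for which $\lim_n v^\alpha_{k_n}(x_\alpha)$ exists and, by an additional diagonal extraction, also $\lim_n v^\alpha_{k_n}(A_k^\alpha)$ exists for every $k$. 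Then I apply Lemma~\ref{almostInv} to $f_\alpha$ and this subsequence to obtain $\langle c_n^\alpha \rangle_n$ and $g_\alpha$. The Cesaro-style averaging in Lemma~\ref{almostInv} preserves existing pointwise limits, exactly as in the proof of Theorem~\ref{RBminFunc}, so $\lim_n c_n^\alpha(A_k^\alpha)$ still exists for each $k$. A final subsequence thinning ensures that the supports of the $c_n^\alpha$ are pairwise disjoint without destroying any of the already stabilized limits.

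With these bookkeeping steps in place, the lemma immediately preceding this theorem applies to $\langle c_n^\alpha \rangle_n$ and $\langle A_k^\alpha \rangle_k$, producing a pseudo-union $P^\alpha$ with $\lim_n c_n^\alpha(P^\alpha) = \sum_k \lim_n c_n^\alpha(A_k^\alpha)$; setting $S^\alpha = \omega \setminus P^\alpha$ gives a pseudo-selector of the partition with $\lim_n c_n^\alpha(S^\alpha) = 1 - \sum_k \lim_n c_n^\alpha(A_k^\alpha)$. Define $\varphi(x) = \lim_n c_n^\beta(x)$ for the least $\beta$ where the limit exists, and let $\mu$ be the associated measure. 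By Fact~\ref{coherence} the stabilized limits persist through all later stages, yielding $\mu(S^\alpha) = 1 - \sum_k \mu(A_k^\alpha)$ for every $\alpha$, so $\mu$ is a P-measure. The RB-minimality and shift-invariance arguments carry over from Theorem~\ref{RBminFunc} and Corollary~\ref{RBminMeas} without change. Finally, given any $\nu \leq_{RK} \mu$, Fact~\ref{fact:collbelowp} upgrades this to $\nu \leq_{RB} \mu$, and RB-minimality then gives $\mu \leq_{RB} \nu$, hence $\mu \leq_{RK} \nu$.

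The main technical point is that the extra diagonalization for the partition limits must be compatible with both Lemma~\ref{almostInv} and the disjoint-support hypothesis of the preceding lemma. This is handled by the same Cesaro-preservation observation used for $x_\alpha$ in Theorem~\ref{RBminFunc}, applied coordinate-wise to each $A_k^\alpha$, followed by a subsequence extraction that does not disturb any of the countably many already-stabilized limits.
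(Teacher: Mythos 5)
Your proposal is correct and follows essentially the same route as the paper: enrich the recursion of Theorem~\ref{RBminFunc} with promises for all partitions, keep the $c_n^\alpha$ with pairwise disjoint supports so the pseudo-union lemma applies to yield the P-measure property, and then invoke Fact~\ref{fact:collbelowp} to upgrade RB-minimality to RK-minimality. The paper's proof is only a sketch of exactly this argument; your version supplies the bookkeeping (diagonal extraction for the partition limits, compatibility with Lemma~\ref{almostInv}, subsequence thinning for disjoint supports) that the paper leaves implicit.
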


\begin{proof} We proceed similarly as in the proof of Theorem \ref{RBminFunc} but additionally, at each step $\alpha$, we make sure that $\langle c^\alpha_n\rangle_n$ has disjoint supports.

In this way we finally obtain a (shift invariant, RB-minimal) P-measure: for each infinite partition $\langle A_k\rangle_k$ of $\omega$ we will find $\alpha$ such that $\lim_n c_n^\alpha(A_k)$ exists for all $k$. Then the above lemma gives us a pseudo-union of the appropriate measure. 

But, by Fact \ref{fact:collbelowp} a measure $\nu$ which is RB-minimal and P is in fact RK-minimal.
\end{proof}

We will now explore the possibility of whether CH can be weakened. 

\begin{rem}
	Under Filter Dichotomy, all measures are nearly ultra and there are no Q-points, hence there are no RB-minimal measures.
\end{rem}

Let us define $\preceq^*_{\mathbb{Q}}$ ($\preceq_\mathbb{Q}$) by restricting $\preceq^*$ ($\preceq$, respectively) to rational convex combinations. In what follows $\mathfrak{p}$ is the \emph{pseudo-intersection number} (see \cite{Blass}). 
\begin{prop}\label{pclosed}
	For any $\gamma<\mathfrak{p}$, for each $\preceq^*$-descending sequence $\langle c^\alpha: \alpha<\gamma\rangle$ with $c^\alpha\preceq_{\mathbb{Q}} e$, there is $c\preceq_{\mathbb{Q}} e$ with $c\preceq^*c^\alpha$ for each $\alpha<\gamma$.
\end{prop}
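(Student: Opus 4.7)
The plan is to obtain $c$ as a filter-generic object for a $\sigma$-centered forcing notion and invoke Bell's theorem that $\mathrm{MA}(\sigma\text{-centered})$ holds for fewer than $\mathfrak{p}$ many dense sets.

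Let $\mathbb{P}$ consist of pairs $p=(s^p,F^p)$, where $s^p=(s^p_0,\dots,s^p_{L-1})$ is a finite sequence of functionals with each $s^p_i$ a rational convex combination of $\{e_m:m\geqslant i\}$, and $F^p\subseteq\gamma$ is finite. Declare $q\leqslant p$ iff $s^q\supseteq s^p$, $F^q\supseteq F^p$, and for every $i\in[\dom(s^p),\dom(s^q))$ and every $\alpha\in F^p$ we have $s^q_i\in\mathrm{conv}(c^\alpha_m:m\geqslant i)$. So the ``promise'' constrains only the portion of $c$ added past $s^p$; in particular $(s,F\cup\{\alpha\})$ is always a valid strengthening of $(s,F)$, and any two conditions with the same $s$-part are compatible via $F_1\cup F_2$. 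Since there are only countably many possible $s^p$'s, $\mathbb{P}$ is $\sigma$-centered.

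The relevant dense sets are $D_\alpha=\{p:\alpha\in F^p\}$ for $\alpha<\gamma$ (trivially dense by the remark above) and $E_n=\{p:\dom(s^p)\geqslant n\}$ for $n\in\omega$. For $E_n$, given $(s,F)$ with $\dom(s)=L<n$ and $F\ne\emptyset$, set $\alpha^*=\max F$; the $\preceq^*$-descending assumption provides a threshold $K$ such that for every $j\geqslant K$ and every $\alpha\in F$ we have $c^{\alpha^*}_j\in\mathrm{conv}(c^\alpha_m:m\geqslant j)$ (trivially for $\alpha=\alpha^*$, and by the cofiniteness clause of $\preceq^*$ for $\alpha<\alpha^*$). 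For $i\in[L,n)$ pick $j_i\geqslant\max(K,i)$ and set $s'_i:=c^{\alpha^*}_{j_i}$; this is a rational convex combination of $\{e_m:m\geqslant i\}$ (since $c^{\alpha^*}\preceq_\mathbb{Q} e$) and lies in $\mathrm{conv}(c^\alpha_m:m\geqslant i)$ for every $\alpha\in F$, so appending these entries to $s$ produces an extension in $E_n$.

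The total number of dense sets is $\max(|\gamma|,\omega)<\mathfrak{p}$, so Bell's theorem yields a filter $G\subseteq\mathbb{P}$ meeting them all. Put $c:=\bigcup\{s^p:p\in G\}$; density of the $E_n$'s forces $\dom(c)=\omega$, and by construction each $c_i$ is a rational convex combination of $\{e_m:m\geqslant i\}$, so $c\preceq_\mathbb{Q} e$. For each $\alpha<\gamma$, fix $p_\alpha\in G\cap D_\alpha$ and set $L_\alpha=\dom(s^{p_\alpha})$; for $i\geqslant L_\alpha$, any $q\in G$ with $q\leqslant p_\alpha$ and $\dom(s^q)>i$ satisfies $c_i=s^q_i\in\mathrm{conv}(c^\alpha_m:m\geqslant i)$ by the ordering, giving $c\preceq^* c^\alpha$. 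The main difficulty is the density of the $E_n$'s: one must exhibit a single rational combination of the $e_m$'s lying simultaneously in finitely many convex hulls $\mathrm{conv}(c^\alpha_m:m\geqslant i)$, and this uniform witness is precisely what the $\preceq^*$-descending hypothesis supplies via $c^{\alpha^*}_j$.
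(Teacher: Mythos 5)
Your proof is correct and follows essentially the same route as the paper's: the same poset of pairs (finite approximation, finite set of promises), the same verification of $\sigma$-centeredness via conditions sharing the stem, the same dense sets, and the same appeal to Bell's theorem. The only difference is that you spell out the density of the $E_n$'s (which the paper leaves as ``easy to see''), and your witness $c^{\alpha^*}_{j_i}$ with $\alpha^*=\max F$ is exactly the right one.
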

Before we prove this fact, let us see that it is enough to weaken the assumption of $CH$ in Theorems \ref{RBminFunc},\ref{RBminMeas} and \ref{RKminMeas} to $\mathfrak{p}=\mathfrak{c}$. Notice that the only obstruction stopping us from continuing the
construction beyond $\omega_1$ and up to $\mathfrak{c}$ is the difficulty of obtaining $v^\alpha \preceq^* c^\beta $ given $\langle c^\beta:\beta<\alpha\rangle$. This precisely is covered by the above proposition assuming that
$\mathfrak{p}=\mathfrak{c}$. Notice also that beyond the starting sequence only Lemma \ref{almostInv} results in non-trivial convex combinations and they both use only rationals. 
\begin{proof}
	Given a $\preceq^*$-descending sequence $\langle c^\alpha: \alpha<\gamma\rangle$ we define a poset $\mathbb{M}$ by
	\[\mathbb{M} = \{\langle s,F\rangle: s\text{ is a finite sequence extendible to } \sigma\preceq_{\mathbb{Q}} e\text{ and } F\subseteq \gamma \text{ is finite}\} \]
	and put $\langle s',F'\rangle \leqslant \langle s,F\rangle$ if and only if $s'$ extends $s$, $F\subseteq F'$ and
	\[\forall n\in\dom(s')\setminus\dom(s)\,\forall\alpha\in F\; s'_n\in conv(c_k^\alpha:k\geqslant n).\]
	Furthermore, for each $\alpha$ put $D_\alpha=\{\langle s,F\rangle\in\mathbb{M}: \alpha\in F\}$ and $D^n=\{\langle s,F\rangle\in\mathbb{M}: n\in\dom(s)\}.$ It is easy to see that each $D_\alpha$ and $D^n$ is dense in $\mathbb{M}$. 
	
	Notice also that any finite set of elements of $\mathbb{M}$ sharing the same $s$ has a common lower bound (simply take the union of $F$'s), thus $\mathbb{M}$ is $\sigma$-centered. 
	
	By \cite{Bell1981} (see also \cite[Theorem 7.12]{Blass}) we know that $MA_{<\mathfrak{p}}(\sigma\text{-centered})$ is true, which gives us a filter $G$ on $\mathbb{M}$, generic for the family of all $D_\alpha$'s and $D^n$'s.
	
	Let $c=\bigcup\{s:\langle s,F\rangle \in G\}$. Then $c\preceq_{\mathbb{Q}} e$ and given $\alpha<\gamma$ we can find $\langle s,F\rangle \in G\cap D_\alpha$ which means that for any $n\not\in\dom(s)$ we have $c_n\in conv(c^\alpha_k:k\geqslant n)$.
\end{proof}

Note that the construction of a universally measurable measure can be done under the assumption $\mathrm{cov}(\mathcal{M})=\mathfrak{c}$ (see
\cite[538S]{Fremlin5}) and in the random model (see \cite[553N]{Fremlin5}), so one can aim to improve the above result.

\subsection{Measurability}
We finish this section with several remarks concerning the measurability (as functions from $2^\omega$ to $\mathbb{R}$) of the measures.

\begin{rem}\label{universallymeasurable} It is possible to modify the construction of Theorem \ref{RBminFunc} so that we may additionally assume that the functional obtained is universally measurable. Indeed, begin with enumerating all bounded Radon measures on
	$2^\omega$ as $\langle\mu_\alpha:\alpha<\omega_1\rangle$. Then given $\langle v_n^\alpha\rangle_n$ we pick a subsequence ensuring that $\lim_n v_{k_n}^\alpha(x)$ exists $\mu_\alpha$-almost everywhere. This can be done by the reflexivity of the
	space $L^2(\ell^\infty,\mu_\alpha)$ in which $\langle v_n^\alpha\rangle_n$ is a bounded sequence. Then, using Mazur's lemma, we may obtain that this sequence converges in norm. More details on this approach can be found in \cite{Godefroy}.
\end{rem}

Measurability is yet another property distinguishing Q-measures from the measure of Theorem \ref{thm:RBMinNotQ}. We enclose a sketch of the proof of the following proposition although it follows from a much more general result of Mokobodzki
who proved that rapid filters are not Lebesgue measurable (see \cite{Barto}).

\begin{prop}\label{Q-not-meas} If $\mathcal{F}$ is a Q-filter, then it is not Lebesgue measurable. Consequently, Q-measures are not Lebesgue measurable. 
\end{prop}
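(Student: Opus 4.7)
The plan is to deduce the statement from Mokobodzki's theorem that every rapid filter is Lebesgue non-measurable, via the easy observation that a Q-filter is rapid. Recall that a filter $\mathcal{F}$ is \emph{rapid} if for every $f\in\omega^\omega$ there is $X\in\mathcal{F}$ whose enumeration dominates $f$. So, first, given a strictly increasing $f$, I would partition $\omega$ into the finite intervals $I_{-1} = [0, f(0))$ and $I_n = [f(n), f(n+1))$ for $n\geq 0$, apply the Q-filter property to obtain a selector $S \in \mathcal{F}$ with $|S \cap I_n|\leq 1$ for every $n$, and conclude that $|S\cap f(n)|\leq n+1$; this shows $\mathcal{F}$ is rapid.

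The second step is to cite Mokobodzki's theorem that a rapid filter cannot be Lebesgue measurable (see e.g.\ \cite{Barto}). For the sake of completeness, here is the idea. Since $\mathcal{F} \subseteq 2^{\omega}$ is a tail set (closed under finite modifications), the Kolmogorov $0$--$1$ law forces $\lambda(\mathcal{F}) \in \{0,1\}$; and since the measure-preserving involution $A \mapsto \omega \setminus A$ sends $\mathcal{F}$ into the disjoint set $\mathcal{F}^{*}$, freeness rules out measure $1$, so $\lambda(\mathcal{F}) = 0$. One then covers $\mathcal{F}$ by an open set $U$ of small measure, writes it as a disjoint union of basic clopens $[s_k]$, and from the lengths $|s_k|$ extracts a function $f$ with the property that every $X \in \mathcal{F}$ has its enumeration eventually bounded above by $f$; rapidity of $\mathcal{F}$ then produces an $X \in \mathcal{F}$ whose enumeration is not bounded by $f$, a contradiction.

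Finally, for the consequence about Q-measures: if $\mu$ is a Q-measure, then by Proposition \ref{prop:q-filter} the collection $\mathcal{F} = \{F \subseteq \omega : \mu(F) = 1\}$ is a Q-filter, and if $\mu$ is Lebesgue measurable as a function $2^\omega \to [0,1]$, then $\mathcal{F} = \mu^{-1}(\{1\})$ is Lebesgue measurable, contradicting the first part. The main obstacle is of course the second step (Mokobodzki's theorem), since the Q-filter-to-rapid reduction and the Q-measure-to-Q-filter reduction are both immediate; however, since that result is well documented and cited, the proof can legitimately be presented as a sketch invoking it.
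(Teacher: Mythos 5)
Your proposal is correct, but it takes a genuinely different route --- in fact, exactly the route the paper mentions and then deliberately declines to follow. You reduce to Mokobodzki's theorem that rapid filters are not Lebesgue measurable, via the standard observation that a Q-filter is rapid (partition $\omega$ into the intervals $[f(n),f(n+1))$ and take a selector in $\mathcal{F}$); the paper cites this very result (\cite{Barto}) as a ``much more general'' source for the proposition, but prefers a short self-contained argument tailored to the Q-property. Concretely, after the same preliminary reduction (a measurable free filter must be null, by the $0$--$1$ law and the measure-preserving complementation map), the paper shows directly that a Q-filter meets the complement of every open $U$ of non-full measure: write $U$, without loss of generality, as a union of basic clopens of the form $\{x\colon \forall i\in F_n\ x(i)=1\}$ with the $F_n$ pairwise disjoint of size at least $2$, take a selector $X\in\mathcal{F}$ of a partition into finite sets refining $\langle F_n\rangle_n$, and observe that $|X\cap F_n|\leq 1$ forces $\chi_X\notin U$. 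Your approach buys generality (non-measurability of all rapid filters) at the cost of a cited black box; the paper's buys a two-line self-contained proof. One caveat: your parenthetical sketch of Mokobodzki's theorem misstates the final combinatorial step --- the covering $\mathcal{F}\subseteq U$ does not show that members of $\mathcal{F}$ have slowly growing enumerations (an open set of small measure can perfectly well contain very sparse sets); the actual argument extracts from the clopen decomposition of $U$ a function $f$ such that every set whose enumeration dominates $f$ lies \emph{outside} $U$, and rapidity then produces such a set inside $\mathcal{F}\subseteq U$, a contradiction. Since you explicitly defer to the literature for that step, this is not a gap in your proof, but the sketch as written would not expand into a correct argument. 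The remaining reductions (Q-filter to rapid, and the observation that Lebesgue measurability of $\mu$ makes $\mathcal{F}=\mu^{-1}(\{1\})$ measurable, contradicting the first part via Proposition \ref{prop:q-filter}) are fine and match the paper's intent.
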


\begin{proof} Since the only possibility for a free filter to be measurable is to have Lebesgue measure 0, it is enough to show that $\mathcal{F}$ intersects every closed subset of $2^\omega$ of positive Lebesgue measure. In other words, we have to show that if $U$ is an open set which is not of full measure, then there is $X\in \mathcal{F}
	\setminus U$. Let $U = \bigcup_n C_n$, where each $C_n$ is a basic clopen.

	Without loss of generality we may assume that for each $n$ there is a finite $F_n$ such that $C_n = \{x\in 2^\omega\colon \forall i\in F_n \ x(i) = 1\}$. Also, we may assume that $\langle F_n\rangle_n$ are pairwise disjoint and of cardinality at
	least $2$. Applying Q-property, we get that there is $X\in \mathcal{F}$ such that $|X\cap F_n|\leq 1$ for every $n$. Clearly, $X\notin U$. 
\end{proof}

\begin{cor} No universally measurable measure has a Q-measure RK-below. 
\end{cor}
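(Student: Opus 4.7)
The plan is to argue by contradiction: assume that $\mu\colon 2^\omega\to \mathbb{R}$ is universally measurable and that some Q-measure $\nu$ satisfies $\nu\leq_{RK}\mu$, witnessed by $f\colon\omega\to\omega$ with $f[\mu]=\nu$. The goal is to transfer universal measurability from $\mu$ down to $\nu$ and then invoke Proposition~\ref{Q-not-meas}, which forbids Lebesgue measurability (and hence universal measurability) for any Q-measure.

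First I would encode the reduction as a continuous self-map of the Cantor set. Define $F\colon 2^\omega\to 2^\omega$ by $F(A)=f^{-1}[A]$; the $n$-th coordinate of $F(A)$ is simply the $f(n)$-th coordinate of $A$, so $F$ is continuous. By the definition of the pushforward, $\nu(A)=\mu(f^{-1}[A])=\mu(F(A))$ for every $A\subseteq\omega$, so $\nu=\mu\circ F$ pointwise on $2^\omega$.

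Next, I would show that if $\mu$ is universally measurable and $F\colon 2^\omega\to 2^\omega$ is Borel, then $\mu\circ F$ is universally measurable. Fix an arbitrary Borel probability measure $\rho$ on $2^\omega$ and let $\rho'=F[\rho]$ be its pushforward, which is again a Borel probability. Since $\mu$ is $\rho'$-measurable, there exist a Borel function $g\colon 2^\omega\to\mathbb{R}$ and a Borel set $N\subseteq 2^\omega$ with $\rho'(N)=0$ such that $\mu=g$ off $N$. Then $g\circ F$ is Borel and agrees with $\mu\circ F$ off $F^{-1}[N]$, where $\rho(F^{-1}[N])=\rho'(N)=0$. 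Hence $\mu\circ F$ is $\rho$-measurable for every Borel probability $\rho$, i.e.\ universally measurable.

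Finally, since universal measurability implies Lebesgue measurability, $\nu$ is Lebesgue measurable. But Proposition~\ref{Q-not-meas} states that no Q-measure is Lebesgue measurable, a contradiction. The only genuinely non-routine step is the pushforward manipulation in the middle paragraph, but it is a standard fact; beyond that, the argument is just chasing definitions, and the continuity of $F$ is automatic from the product topology on $2^\omega$.
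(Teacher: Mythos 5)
Your argument is correct and follows essentially the same route as the paper: both use the continuous map $A\mapsto f^{-1}[A]$ to write $\nu=\mu\circ h$, transfer universal measurability from $\mu$ to $\nu$, and then invoke Proposition~\ref{Q-not-meas}. The only difference is that you prove the pushforward/composition step directly, whereas the paper cites it as a known fact about compositions of universally measurable functions.
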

\begin{proof} We just have to see that if $\mu$ is universally measurable and $\nu\leq_{RK} \mu$, then $\nu$ is universally measurable, too. Then the corollary follows from Proposition \ref{Q-not-meas}. 
	
	So, let $\mu$ be universally measurable, and let $\nu = f[\mu]$ for some function $f\colon \omega \to
	\omega$. 
	Notice that the function $h\colon 2^\omega \to 2^\omega$ defined by 
	$ h(\chi_A) = \chi_{f^{-1}[A]}$  
	is continuous and so it is universally measurable. So, the measure $\nu = \mu \circ h$ is a composition of universally measurable functions, and so it is universally measurable (see e.g. \cite[434Df]{Fremlin4}).
\end{proof}

One might ask how poorly Q-filters behave in terms of measurability, for instance, whether they are measurable with respect to other natural measures on the Cantor set. In light of Proposition \ref{Q-not-meas}, it may be surprising to note that
selective ultrafilters appear to be more easily measurable under certain measures than other types of ultrafilters:

\begin{rem} Bartoszy\'{n}ski, in \cite{Barto}, considered measures on $2^\omega$ of the form $\mu_p = \prod_n p_n \delta_0 + (1-p_n) \delta_1$ (notice that if $p$ is constantly $1/2$, then $\mu_p$ is the standard Haar measure on $2^\omega$). He
	showed, among other things, that if $\sum_n p_n < \infty$, then all filters are $\mu_p$-measurable. It is not clear to us if there is a sequence $\langle p_n\rangle_n$ converging to $0$ and an ultrafilter $\mathcal{U}$ such that $\mathcal{U}$ is
	not $\mu_p$-measurable. However, selective ultrafilters are always measurable with respect of such measures as, by a result of Mathias (see \cite{Mathias}), these ultrafilters intersect every tall analytic ideal\footnote{In fact, to proceed it is enough if
	an ultrafilter intersects every tall summable ideal, that is if it is a semi-selective ultrafilter.}. So, if $\mathcal{U}$ is selective, then there is $X\in \mathcal{U}$ such that $\sum_{n\in X} p_n < \infty$. By \cite[Theorem 1.3]{Barto} it means that $\mathcal{U}$ must be $\mu_p$-measurable. 
\end{rem}

\section[Between Q and Q+]{Between Q and \texorpdfstring{$Q^+$}{Q+}}\label{sec:Q+}

In this section we will present an attempt to unify natural variations on the definition of a Q-measure. Below we contribute to the discussion about the connections between those variations, some of which were already considered in \cite{Luis}. All partitions mentioned in this section are partitions of $\omega$ into finite sets.

It is not difficult to see that the  Q$^+$-measures form a convex, dense (if non-empty) subset of the set of measures on $\omega$:

\begin{fact}\label{convex-comb}
	If $\mu$ is a Q$^+$-measure, then for every measure $\nu$ and any $\alpha\in(0,1]$, $\lambda=\alpha \mu + (1-\alpha)\nu$ is also Q$^+$.
	Moreover, if $\nu$ is not Q$^+$ then there is a partition that has no selector with $\lambda$-measure larger than $\alpha$.
\end{fact}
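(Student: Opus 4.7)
The plan is to verify both halves directly from the definition of $\lambda$ as a convex combination, exploiting monotonicity of measures and the assumption $\alpha>0$.

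For the first part, I would fix an arbitrary partition $\langle P_n\rangle_n$ of $\omega$ into finite sets and use the Q$^+$ property of $\mu$ to produce a selector $S$ with $\mu(S)>0$. Then simply compute
\[
  \lambda(S) \;=\; \alpha\,\mu(S) + (1-\alpha)\,\nu(S) \;\geq\; \alpha\,\mu(S) \;>\; 0,
\]
using $\alpha>0$, $\mu(S)>0$, and $\nu(S)\geq 0$. Since the partition was arbitrary, $\lambda$ is a Q$^+$-measure.

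For the second part, I would use the assumption that $\nu$ is not Q$^+$ to fix a partition $\langle P_n\rangle_n$ of $\omega$ into finite sets such that \emph{every} selector $S$ satisfies $\nu(S)=0$. For any such selector $S$,
\[
  \lambda(S) \;=\; \alpha\,\mu(S) + (1-\alpha)\,\nu(S) \;=\; \alpha\,\mu(S) \;\leq\; \alpha,
\]
since $\mu(S)\leq 1$. Hence no selector of this partition has $\lambda$-measure larger than $\alpha$, as required.

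Both steps are essentially one-line computations, so there is no real obstacle here; the only thing to keep straight is that $\alpha>0$ is needed for the first part (to transfer Q$^+$ from $\mu$ to $\lambda$) while the bound $\mu(S)\leq 1$ is what yields the sharp value $\alpha$ in the second part.
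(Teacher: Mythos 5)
Your proof is correct; the paper states this Fact without proof, treating it as evident, and your two one-line computations are precisely the argument it has in mind. The only point worth keeping explicit is the one you already flagged: since measures are $[0,1]$-valued, ``not Q$^+$'' gives a partition on which \emph{every} selector has $\nu$-measure exactly $0$, which is what collapses the second computation to $\lambda(S)=\alpha\,\mu(S)\leq\alpha$.
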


The following observation is pivotal to the characterization of Q$^+$-measures.
\begin{thm}
	Given a measure $\mu$ such that for any partition there is a selector of measure strictly greater than $\delta$, there is $\varepsilon>\delta$ such that any partition has a selector of measure no less than $\varepsilon$.
\end{thm}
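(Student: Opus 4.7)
The plan is to argue by contrapositive. Suppose no such $\varepsilon > \delta$ exists; then for every $n \in \omega$ one may pick a partition $\mathcal{P}_n = \langle A^n_k\rangle_{k \in \omega}$ of $\omega$ into finite sets such that every selector of $\mathcal{P}_n$ has $\mu$-measure at most $\delta + 2^{-n}$. I will combine the sequence $\langle \mathcal{P}_n\rangle_n$ into a single partition $\mathcal{Q}$ of $\omega$ into finite sets such that every selector of $\mathcal{Q}$ has $\mu$-measure at most $\delta$. This will contradict the hypothesis, which guarantees, in particular for $\mathcal{Q}$, a selector of measure strictly greater than $\delta$.

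The construction of $\mathcal{Q}$ is diagonal. I recursively choose $0 = N_0 < N_1 < N_2 < \dots$ so that $N_{l+1}$ exceeds $\max A$ for every $\mathcal{P}_n$-cell $A$ with $n \leq l$ and $A \cap [0, N_l] \neq \emptyset$; this is possible because only finitely many cells are at stake at each step. Put $I_l := [N_l, N_{l+1})$. By this choice, for each fixed $n$, all but finitely many $\mathcal{P}_n$-cells have their minimum beyond $N_n$, and each such cell is contained in the union of at most two consecutive blocks $I_l \cup I_{l+1}$. I then define $\mathcal{Q}$ so that its cells are appropriate unions of blocks, arranged (by grouping blocks along a diagonal of the $(n,l)$-grid, and, where necessary, applying finite modifications to the $\mathcal{P}_n$'s --- harmless since $\mu$ vanishes on finite sets so $s(\mathcal{P}_n)$ is unchanged) in such a way that, for every $n$, cofinitely many $\mathcal{P}_n$-cells lie inside a single $\mathcal{Q}$-cell. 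In other words, modulo a finite set $F_n$ the partition $\mathcal{Q}$ coarsens $\mathcal{P}_n$.

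Given such a $\mathcal{Q}$, the conclusion is immediate: for any selector $S$ of $\mathcal{Q}$ and any $n$, the set $S \setminus F_n$ has at most one element in every $\mathcal{P}_n$-cell disjoint from $F_n$, and hence is a selector of $\mathcal{P}_n$. As $\mu$ vanishes on finite sets, $\mu(S) = \mu(S \setminus F_n) \leq s(\mathcal{P}_n) \leq \delta + 2^{-n}$. Letting $n \to \infty$ yields $\mu(S) \leq \delta$, so $s(\mathcal{Q}) \leq \delta$, the desired contradiction.

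The main obstacle is precisely the grouping carried out in the second paragraph: a naive choice of $\mathcal{Q}$-cells as single blocks $I_l$ only makes every selector of $\mathcal{Q}$ a \emph{2-selector} of $\mathcal{P}_n$ (losing a factor of $2$, which is fatal), since a $\mathcal{P}_n$-cell may straddle two consecutive blocks. Overcoming this requires a careful bookkeeping along the diagonal in $(n,l)$ together with finite coarsenings of the $\mathcal{P}_n$'s that align their boundaries with the blocks --- exploiting the fact that modifying $\mathcal{P}_n$ on a finite set preserves $s(\mathcal{P}_n)$ --- so that for each $n$ only finitely many $\mathcal{P}_n$-cells ever straddle a $\mathcal{Q}$-cell boundary.
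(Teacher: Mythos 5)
Your setup---arguing by contrapositive, choosing partitions $\mathcal{P}_n$ all of whose selectors have measure at most $\delta+2^{-n}$, and building interval blocks $I_l$ so that every sufficiently late $\mathcal{P}_n$-cell sits inside two consecutive blocks---is exactly how the paper's proof begins. The gap is the grouping step that you yourself flag as the main obstacle: a partition $\mathcal{Q}$ of $\omega$ into \emph{finite} sets that coarsens every $\mathcal{P}_n$ modulo a finite set simply does not exist in general, and no diagonal bookkeeping or finite modification can produce one, because the obstruction is cofinal rather than finite. Concretely, take $\mathcal{P}_0=\{\{2k,2k+1\}:k\in\omega\}$ and $\mathcal{P}_1=\{\{0\}\}\cup\{\{2k+1,2k+2\}:k\in\omega\}$: if cofinitely many cells of both lie inside single $\mathcal{Q}$-cells, then for all large $k$ the points $2k$, $2k+1$, $2k+2$ lie in one $\mathcal{Q}$-cell, and chaining forces an infinite cell. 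Even a single partition can defeat you: $\{0,2\},\{1,4\},\{3,6\},\{5,8\},\dots$ has no cut points, so every block boundary is straddled by one of its cells and any coarsening of it by unions of blocks must eventually merge all blocks. Finite modifications of the $\mathcal{P}_n$ (which, as you say, do preserve the selector supremum) are powerless against this, since the straddling recurs infinitely often.

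The idea you are missing is that the measure itself, not combinatorics alone, must be used to remove the factor of $2$. The paper keeps the interval blocks $\langle B_l\rangle_l$, invokes the non-meagerness of the null ideal of $\mu$ (Proposition \ref{prop:measurefilternonmeager} together with Talagrand's characterization of meager ideals) to extract an infinite subfamily with $\mu\big(\bigcup_n B_{k_n}\big)=0$, and forms the partition $\langle C_m\rangle_m$ whose odd cells are the $B_{k_n}$ and whose even cells are the runs of blocks in between. A selector $S$ of $\langle C_m\rangle_m$ is still only a $2$-selector of each $\mathcal{P}_n$ modulo a finite set, but $S'=S\cap\bigcup_m C_{2m}$ is a genuine selector modulo a finite set: a cell contained in two consecutive blocks either lies in a single $C_{2m}$, or one of its at most two $S$-points lies in some $B_{k_n}$ and is discarded. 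Since the discarded set is null, $\mu(S)=\mu(S')\leq \delta+2^{-n}$ for every $n$, giving $\mu(S)\leq\delta$ and the contradiction. Without some substitute for this measure-theoretic step, your argument cannot be completed.
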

\begin{proof}
	Assume no such $\varepsilon$ exists. Take $\langle A_n^m\rangle_n$ to be a partition such that every selector $S$ of $\langle A_n^m\rangle_n$ satisfies $\mu(S)<\frac{1}{m}+\delta$. 
	
	Define an interval partition $\langle B_n\rangle_n$ in the following way. Put $B_0=[0,\max A_0^0]$. Having defined $B_n$ let 
	\[b_n=\max\bigcup\Big\{A_k^m: m\leqslant n+1, A^m_k\cap \bigcup_{l\leqslant n} B_l\neq\emptyset\Big\}+1\]
	and define $B_{n+1}=(\max B_n,b_n].$
	Now there is an increasing sequence $k_n$ such that 
	\[\mu\left(\bigcup_n B_{k_n}\right)=0.\]
	
	Let $C_{2n+1}=B_{k_n}$ and let $C_{2n}$ be intervals filling the gaps.
	Take $S$ to be any selector of $\langle C_n\rangle_n$. Notice that for each $m$ for all but finitely many $n$ we have $|S\cap A_n^m|\leqslant 2$. Define $S'$ as $S\cap \bigcup_n C_{2n}$. Then $S'$ is a selector (perhaps up to a finite set) for $\langle A_n^m\rangle_n$ for every
	$m$. Thus $\mu(S')<\frac{1}{m}+\delta$ for all $m$. Therefore, $\mu(S)=\mu(S')\leq \delta$, a contradiction. 
\end{proof}

For every measure we can find $\varepsilon\in[0,1]$ with the following properties.
\begin{enumerate}
	\item For every $\delta<\varepsilon$, any partition has a selector of measure at least $\delta$.
	\item There is a partition, all selectors of which have measure at most $\varepsilon$.
\end{enumerate} 
We will call such $\varepsilon$ the \emph{Q-value} of the measure (denoted by $Q(\mu)$) and a partition from point (2) a \emph{critical partition}. Notice that a measure $\mu$ is Q$^+$ if and only if $Q(\mu)>0$. 

We will say that a measure is an \emph{almost} Q-measure if it has Q-value of 1 and we will call measures \emph{exact} Q$^+$-measures if any partition has a selector of measure at least equal to the Q-value. Notice that Q-measures are those almost Q-measures which are exact.

We now show that almost Q-measures exist if and only if Q$^+$-measures do. Recall that $\mu\restr_A(B)=\mu(A\cap B)/\mu(A)$ for a $\mu$-positive set $A$.
\begin{prop}\label{thm:Q+almostQ}
	Given a Q$^+$-measure $\mu$ and a positive selector $S$ of a critical partition of $\mu$, we have that $\mu\restr_S$ is almost Q. Additionally, if $\mu$ is exact or a P-measure, then $\mu\restr_S$ is a Q-measure. 
\end{prop}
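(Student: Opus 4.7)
The plan is, for an arbitrary partition $\langle B_n\rangle_n$ of $\omega$ into finite sets and any $\delta>0$, to construct a selector $T$ of $\langle B_n\rangle$ with $\mu\restr_S(T)\geq 1-\delta$. Write $\{s_m\}=A_m\cap S$ whenever the intersection is nonempty, and $M=\{m:A_m\cap S\neq\emptyset\}$. I would form the hybrid partition $\langle R_l\rangle$ of $\omega$ whose pieces are the nonempty $B_n\cap S$ (one for each $n$) on the $S$-side, together with the nonempty $A_m\setminus\{s_m\}$ for $m\in M$ and the $A_m$ for $m\notin M$ on the $S^c$-side; each piece is finite.

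By the $Q^+$-property applied to $\langle R_l\rangle$ (with $Q(\mu)=\varepsilon$), for every $\eta>0$ there is a selector $T=T_S\sqcup T_{S^c}$ of $\langle R_l\rangle$ with $\mu(T)\geq\varepsilon-\eta$; here $T_S=T\cap S$ is automatically a selector of $\langle B_n\rangle$. The decisive inequality uses criticality of $\langle A_m\rangle$: setting $H=\{s_m:T_{S^c}\cap A_m\neq\emptyset\}$, the set $T_{S^c}\cup(S\setminus H)$ picks at most one element from each $A_m$ and is therefore a selector of $\langle A_m\rangle$, hence of $\mu$-measure at most $\varepsilon$. Rearranging yields $\mu(T_{S^c})\leq \varepsilon-\mu(S)+\mu(H)$, and combining with $\mu(T)\geq\varepsilon-\eta$ gives
\[\mu(T_S)\;\geq\;\mu(S)-\mu(H)-\eta.\]

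The main obstacle is controlling $\mu(H)$. My plan is to restrict the $Q^+$-application to the sub-class of selectors with $T_{S^c}\subseteq\bigcup_{m\notin M}A_m$, which automatically forces $H=\emptyset$. On this sub-class, the $S^c$-contribution is at most $\varepsilon-\mu(S)$ (since then $T_{S^c}\cup S$ is itself a selector of $\langle A_m\rangle$), so the challenge is to verify that the supremum of $\mu(T)$ over this restricted sub-class still reaches $\varepsilon$; I expect to do this by applying $Q^+$ to a further-modified partition in which $\bigcup_{m\in M}(A_m\setminus\{s_m\})$ is replaced by its singletons, which forces the extremal selector to concentrate its remaining mass in $T_S$. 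Choosing $\eta<\delta\mu(S)$ will then give $\mu(T_S)\geq(1-\delta)\mu(S)$, proving that $\mu\restr_S$ is almost $Q$.

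For the final clause, if $\mu$ is exact then the supremum in $Q^+$ is attained, so one can run the argument above with $\eta=0$ and the extremal selector furnishes an actual $T$ with $\mu\restr_S(T)=1$. If instead $\mu$ is a $P$-measure, I would first check that $\mu\restr_S$ inherits the $P$-property (applying $P$ of $\mu$ to decreasing sequences $\langle A_n\cap S\cup S^c\rangle_n$ and transferring the resulting pseudo-intersection through the restriction), and then combine almost-$Q$ with $P$ by a diagonal argument: produce selectors $T_k$ of $\langle B_n\rangle$ with $\mu\restr_S(T_k)\geq 1-2^{-k}$, form a pseudo-intersection of the decreasing sequence $\bigcap_{i\leq k}T_i$ via the $P$-property of $\mu\restr_S$, and trim by finitely many elements to obtain a selector of full $\mu\restr_S$-measure.
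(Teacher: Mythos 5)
Your setup is sound up to the inequality $\mu(T_S)\geq\mu(S)-\mu(H)-\eta$, and you correctly identify that everything hinges on killing the term $\mu(H)$; but neither device you propose for this works. Restricting attention to selectors with $T_{S^c}\subseteq\bigcup_{m\notin M}A_m$ is not something the Q$^+$ property can deliver by itself: you must show that the supremum of $\mu(T)$ over this restricted class is still $\varepsilon$, and since on that class $T_{S^c}\cup S$ is a selector of the critical partition, so that $\mu(T_{S^c})\leq\varepsilon-\mu(S)$, reaching $\varepsilon$ is \emph{equivalent} to $\sup\mu(T_S)=\mu(S)$ --- which is exactly the statement ``$\mu\restr_S$ is almost Q'' you are trying to prove. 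The argument is circular. The fallback of replacing $D=\bigcup_{m\in M}(A_m\setminus\{s_m\})$ by singletons makes matters worse: in that modified partition a selector may contain \emph{all} of $D$, so the selector of measure $\geq\varepsilon-\eta$ provided by Q$^+$ can concentrate essentially all of its mass inside $D$ (note $\mu(D)$ can be close to $1$) and contribute nothing to $T_S$; nothing forces the extremal selector's remaining mass into the $S$-side.

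The paper avoids this by coarsening rather than splitting: it sets $C_n=\bigcup\{A_k\colon A_k\cap B_n\cap S\neq\emptyset\}$, so each $C_n$ is a union of whole blocks of the critical partition and any selector $S'$ of $\langle C_n\rangle_n$ is automatically a selector of $\langle A_k\rangle_k$. With $A=\bigcup\{A_k\colon A_k\cap S'\neq\emptyset\}$, the set $S'\cup(S\setminus A)$ is again a selector of the critical partition, so criticality gives $\mu(S')+\mu(S\setminus A)\leq\varepsilon$; choosing $\mu(S')\geq\varepsilon-(1-c)\mu(S)$ yields $\mu\restr_S(S\cap A)\geq c$, and $S\cap A$ is a selector of $\langle B_n\rangle_n$. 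This is the step your decomposition cannot reproduce, because your hybrid partition permits $T$ to use both $s_m$ and another point of $A_m$. Separately, your diagonal argument for the P-measure clause does not reach full measure: a pseudo-intersection of $\bigcap_{i\leq k}T_i$ has measure only $\lim_k\mu(\bigcap_{i\leq k}T_i)$, which is bounded away from $1$ by the accumulated losses $\sum_k 2^{-k}$. The paper instead first makes the selectors almost increasing (via pseudo-unions of their complements) and then glues them along a block decomposition of $\omega$; that is the content of its separate proposition that every P-measure which is almost Q is a Q-measure.
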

\begin{proof}
	Let $\varepsilon = Q(\mu)$. Take $\langle A_n\rangle_n$, a critical partition of $\mu$, and let $S$ be a selector for $\langle A_n\rangle_n$ with $\mu(S)>0$. 
	
	Let $\langle B_n\rangle_n$ be any partition of $\omega$ into finite sets and fix $c<1$ (or $c=1$ if $\mu$ is exact).
	Let \[C_n=\bigcup\{A_k:A_k\cap B_n\cap S\neq \emptyset\}\]
	and let $S'$ be any selector of $\langle C_n\rangle_n$ with $\mu(S')\geqslant \varepsilon - (1-c)\mu(S)$. Then $S'$ is also a selector for $\langle A_n\rangle_n$. Put $A=\bigcup\{A_k:A_k\cap S'\neq\emptyset\}$. 
	
	Since $S'\cup (S\setminus A)$ is a selector for $\langle A_n\rangle_n$ we know that $\mu(S'\cup (S\setminus A))\leqslant\varepsilon$ and so $\mu(S\setminus A)\leqslant (1-c)\mu(S)$. Notice that $S\cap A$ is a selector for $\langle C_n\rangle_n$ and also for $\langle B_n\rangle_n$, but $\mu\restr_S(A\cap S)=1-\mu(S\setminus A)/\mu(S)\geqslant c$.
	
	If $\mu$ is a P-measure then so is $\mu\restr_S$ and the conclusion follows from Proposition \ref{thm:almQ+P=Q} below.
\end{proof}
\begin{cor}
	Every Q$^+$-measure is of the form $\delta\nu + \sum_n \alpha_n\mu_n$ where $\nu$ is not Q$^+$ and each $\mu_n$ is almost Q.
\end{cor}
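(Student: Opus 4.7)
The plan is to invoke Proposition \ref{thm:Q+almostQ} repeatedly via a maximality argument. Consider the collection of all pairwise disjoint families $\{S_i\}_{i\in I}$ of subsets of $\omega$ with $\mu(S_i)>0$ and $\mu\restr_{S_i}$ almost Q for every $i$. Any such family has total $\mu$-measure bounded by $1$ and is therefore at most countable, so by Zorn's lemma (or a greedy construction) I would fix a maximal such family $\{S_n\}_n$. Put $T=\omega\setminus\bigcup_n S_n$, $\alpha_n=\mu(S_n)$, $\mu_n=\mu\restr_{S_n}$, $\delta=\mu(T)$, and take $\nu=\mu\restr_T$ when $\delta>0$; in the trivial case $\delta=0$, set $\nu$ to be any non-Q$^+$ measure (which exists, e.g.\ any shift invariant measure by Proposition \ref{Q-not-shift}), since the coefficient of $\nu$ vanishes.

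The central point is then to verify that $\nu$ is not Q$^+$. Assume the contrary; Proposition \ref{thm:Q+almostQ} applied to $\nu$ produces a selector $S$ of a critical partition of $\nu$ with $\nu(S)>0$ and $\nu\restr_S$ almost Q. Since $\nu$ is supported on $T$, one may replace $S$ by $S\cap T\subseteq T$ without changing $\nu$-measure, and the chain of equalities
\[
(\mu\restr_T)\restr_{S\cap T}(A)\;=\;\frac{\mu(S\cap T\cap A)/\mu(T)}{\mu(S\cap T)/\mu(T)}\;=\;\mu\restr_{S\cap T}(A)
\]
shows $\mu\restr_{S\cap T}$ is almost Q with $\mu(S\cap T)>0$. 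Adjoining $S\cap T$ to $\{S_n\}_n$ then contradicts the maximality of the family, as desired.

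The main obstacle I foresee is a possible failure of $\sigma$-additivity of $\mu$ on the family: if $\mu\big(\bigcup_n S_n\big)>\sum_n\alpha_n$, then $\delta<1-\sum_n\alpha_n$, the neat pointwise identity $\mu(A)=\delta\nu(A)+\sum_n\alpha_n\mu_n(A)$ need not hold on the nose, and $\nu$ need not coincide with the ``right'' residual. I would circumvent this by building $\{S_n\}_n$ through a transfinite recursion: at stage $\alpha$, if $\mu\restr_{T_\alpha}$ is Q$^+$, apply Proposition \ref{thm:Q+almostQ} to extract an almost Q piece $S_\alpha\subseteq T_\alpha$ with $\mu(S_\alpha)>0$ and set $T_{\alpha+1}=T_\alpha\setminus S_\alpha$, handling limit stages by $T_\lambda=\bigcap_{\alpha<\lambda}T_\alpha$. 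Since every successor step removes a positive amount of $\mu$-mass and the total mass is $1$, the recursion must terminate at some countable ordinal with a non-Q$^+$ residual $\mu\restr_T$, which, combined with a careful choice at limit stages to preserve the $\sigma$-additivity of $\mu$ on the extracted family, yields the required decomposition of $\mu$ as $\delta\nu+\sum_n\alpha_n\mu_n$.
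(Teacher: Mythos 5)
Your argument is essentially the paper's: the paper also exhausts $\mu$ by a countable disjoint family of positive sets whose normalized restrictions are almost Q, obtained via Proposition \ref{thm:Q+almostQ}, and sets $\nu$ to be the restriction to the complement. The only cosmetic difference is that the paper takes the $S_n$ to be selectors of one \emph{fixed} critical partition of $\mu$, which makes the failure of Q$^+$ for the residual immediate (that partition itself has no positive selector left in the complement), whereas you re-apply the proposition to the residual and argue by contradiction with maximality; both routes work. The $\sigma$-additivity worry in your last paragraph is legitimate, but it is equally present in the paper's own proof and your transfinite recursion does not resolve it --- the recursion terminates at a countable stage anyway, and the possible defect $\mu\bigl(\bigcup_n S_n\bigr) > \sum_n \mu(S_n)$ persists no matter how the pieces are chosen, so the stated identity $\mu = \delta\nu + \sum_n \alpha_n\mu_n$ with $\delta = \mu\bigl(\omega\setminus\bigcup_n S_n\bigr)$ and $\alpha_n = \mu(S_n)$ must be read as tolerating (or letting the $\nu$-term absorb) that defect.
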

\begin{proof}
	Recursively pick disjoint positive selectors $\langle S_\alpha\rangle _\alpha$ for a fixed critical partition and define $\mu_\alpha = \mu\restr_{S_\alpha}$. By  Proposition \ref{thm:Q+almostQ} each $\mu_\alpha$ is almost Q. This procedure has to terminate after countably many steps. Then let $\nu = \mu\restr_{\omega\setminus \bigcup_\alpha S_\alpha}$.
\end{proof}
Furthermore, using Fact \ref{convex-comb} for an almost Q-measure, we can observe the following.
\begin{cor}
	If there exists a $Q^+$-measure then any Q-value in the range $[0,1]$ is attained by some measure.
\end{cor}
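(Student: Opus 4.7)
The plan is to combine Proposition \ref{thm:Q+almostQ} with Fact \ref{convex-comb}: the former converts any $Q^+$-measure into an almost Q-measure, and the latter lets me tune the Q-value of a convex combination precisely. So for a target value $\varepsilon\in[0,1]$ I would take $\lambda=\varepsilon\mu+(1-\varepsilon)\nu$, where $\mu$ is almost Q and $\nu$ is not $Q^+$, and prove that $Q(\lambda)=\varepsilon$.

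First I would dispose of the endpoints. The value $\varepsilon=1$ is witnessed by an almost Q-measure, which exists by Proposition \ref{thm:Q+almostQ} applied to the hypothesised $Q^+$-measure. For $\varepsilon=0$ any non-$Q^+$ measure works: shift-invariant measures are not $Q^+$ by Proposition \ref{Q-not-shift}, and they exist unconditionally (take any ultrafilter density $d_{\mathcal U}$, which is shift invariant).

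For $\varepsilon\in(0,1)$ form $\lambda=\varepsilon\mu+(1-\varepsilon)\nu$ with $\mu,\nu$ as above. The upper bound $Q(\lambda)\leqslant\varepsilon$ is exactly the ``moreover'' clause of Fact \ref{convex-comb}: because $\nu$ is not $Q^+$, there is a partition whose every selector has $\lambda$-measure at most $\varepsilon$. For the matching lower bound $Q(\lambda)\geqslant\varepsilon$ I would, given an arbitrary partition and a target $\delta<\varepsilon$, apply the almost Q property of $\mu$ with parameter $\delta/\varepsilon<1$ to obtain a selector $S$ with $\mu(S)\geqslant\delta/\varepsilon$, whence
\[
\lambda(S)\;\geqslant\;\varepsilon\,\mu(S)\;\geqslant\;\delta.
\]
Letting $\delta\uparrow\varepsilon$ gives the desired lower bound, and combined with the upper bound yields $Q(\lambda)=\varepsilon$.

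I do not foresee any real obstacle here; all of the substantive work has already been done in Proposition \ref{thm:Q+almostQ} and Fact \ref{convex-comb}. The only point to watch is that a non-$Q^+$ measure is available in ZFC, which is handled by invoking a shift-invariant measure via Proposition \ref{Q-not-shift}.
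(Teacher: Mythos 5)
Your proposal is correct and is exactly the argument the paper intends: it states the corollary as an immediate consequence of Fact \ref{convex-comb} applied to an almost Q-measure (obtained from Proposition \ref{thm:Q+almostQ}), with the convex combination $\varepsilon\mu+(1-\varepsilon)\nu$ pinning the Q-value at $\varepsilon$ via the two halves you describe. Your explicit treatment of the endpoints and of the ZFC existence of a non-Q$^+$ (shift invariant) measure just fills in details the paper leaves implicit.
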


One could ask whether the almost Q-measures are actually different from Q-measures. To this end, we provide a partial negative result. 
\begin{prop}\label{thm:almQ+P=Q}
	Every P-measure which is almost Q, is a Q-measure.
\end{prop}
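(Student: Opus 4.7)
The plan is to combine iterated applications of almost Q on restrictions of $\mu$ with pseudo-intersections supplied by the P-property.

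Given a partition $\langle A_n\rangle$ of $\omega$ into finite sets, I would first use almost Q to pick, for each $k\ge1$, a selector $S_k$ of $\langle A_n\rangle$ with $\mu(S_k)>1-2^{-k-1}$. The intersections $T_k=\bigcap_{j\le k}S_j$ form a descending sequence of selectors, and finite subadditivity gives $\mu(T_k)\ge 1-\sum_{j\le k}2^{-j-1}>1/2$. Applying the P-measure property to $\langle T_k\rangle$ yields a pseudo-intersection $T$ with $\mu(T)=\lim_k\mu(T_k)$. Since $T\subseteq^{\ast}S_1$ and $|S_1\cap A_n|\le 1$, the cardinality $|T\cap A_n|$ is bounded uniformly by $1+|T\setminus S_1|$, so $T$ violates the selector property on only finitely many $A_n$'s; trimming those elements yields a genuine selector $V_0$ of the same measure.

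Second, iterate on the uncovered portion. Set $\Omega_0=\omega$, and once $V_0,\dots,V_{k-1}$ with pairwise disjoint supports have been produced so that their union $V$ is a selector of $\langle A_n\rangle$, let $\Omega_k=\bigcup\{A_n:A_n\cap V=\emptyset\}$. A short verification shows $\mu\restr\Omega_k$ remains P and almost Q (for almost Q, extend any partition of $\Omega_k$ by singletons of its complement and apply the ambient almost Q). The construction of the first step inside $\Omega_k$ then yields a selector $V_k\subseteq\Omega_k$ of $\langle A_n\cap\Omega_k\rangle$ of $\mu$-measure at least $\mu(\Omega_k)/2$, forcing $\mu(\Omega_{k+1})\le\mu(\Omega_k)/2$, hence $\mu(\Omega_k)\to0$. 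Set $S=\bigcup_k V_k$; by construction $S$ is a selector of $\langle A_n\rangle$, and $\mu(S)\ge\mu(V_0\cup\dots\cup V_K)$ for every $K$.

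To conclude $\mu(S)=1$, decompose $\omega\setminus S=\bigcap_k\Omega_k\cup\bigcup_k\mathcal{N}_k$, where $\mathcal{N}_k$ is the ``non-selected leftover'' at stage $k$ (elements of $A_n$'s newly covered by $V_k$ but not in $V_k$). The first term has measure $\le\inf_k\mu(\Omega_k)=0$. For the second, note that $\bigcup_{k>K}\mathcal{N}_k\subseteq\Omega_K$, so finite subadditivity gives $\mu(\bigcup_k\mathcal{N}_k)\le\sum_{k\le K}\mu(\mathcal{N}_k)+\mu(\Omega_K)$ for every $K$. It then suffices to arrange $\mu(\mathcal{N}_k)=0$ at each stage.

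The hardest step is precisely this: producing at each stage a selector that captures the \emph{full} $\mu$-measure of the $A_n$'s it hits, not merely a $(1-\varepsilon)$-fraction of it. Pure almost Q only permits making the slack $\mu(\mathcal{N}_k)$ as small as desired, and unchecked the cumulative slack need not vanish; the P-property is what must close this gap. The intended mechanism is to apply, within each $\Omega_k$, a further internal iteration producing selectors $V_k^{(j)}$ with $\mu(V_k^{(j)})\uparrow\mu(\Omega_{k-1})-\mu(\Omega_k)$, and then pass to a pseudo-intersection realizing the exact limit — an ``almost Q plus P means exact Q on each restriction'' statement, whose verification is the delicate part of the argument. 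Without this refinement one recovers only $\mu(S)\ge1-\varepsilon$ for every $\varepsilon>0$, which falls short of the Q-measure conclusion.
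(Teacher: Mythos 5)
Your proposal has a genuine gap, and it is exactly the one you flag at the end: the construction as described only produces, for each prescribed $\varepsilon>0$, a selector of measure at least $1-\varepsilon$ (which is just a restatement of ``almost Q''), not a single selector of measure $1$. The mechanism you offer to close the gap --- ``almost Q plus P means exact Q on each restriction'' --- is circular: restrictions of a P-measure to positive sets are again P-measures, and (as you note) restrictions inherit almost Q, so the exactness statement on each $\Omega_k$ is precisely the proposition you are trying to prove. Moreover, the first step already gives away too much: intersecting the selectors ($T_k=\bigcap_{j\le k}S_j$) and passing to a pseudo-intersection caps the measure of $V_0$ at $\lim_k\mu(T_k)$, which your bounds only keep above $1/2$; every subsequent stage loses a comparable fraction, and these losses accumulate into the slack $\sum_k\mu(\mathcal{N}_k)$ that you cannot make vanish.

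The paper closes the gap by using the P-property in the opposite direction: instead of shrinking the selectors by intersection, it enlarges them into an almost \emph{increasing} chain. Concretely, one takes selectors $S_n$ with $\mu(S_n)>1-2^{-n-2}$ and uses the P-property to find sets $Z_n$ (pseudo-unions of $\langle\omega\setminus S_k\colon k>n\rangle$, obtainable as complements of pseudo-intersections of $\bigcap_{n<k\le m}S_k$) with $\mu(Z_n)<2^{-n-1}$ and $Z_{n+1}\subseteq Z_n$. Setting $S_n'=S_n\setminus Z_n$ yields selectors with $\mu(S_n')>1-2^{-n}$ and $S_n'\subseteq^*S_{n+1}'$. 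A block decomposition $\langle B_n\rangle$ of $\omega$, grouping the $A_k$'s according to where the transition from $S_n'$ to $S_{n+1}'$ occurs, then lets one glue $S=\bigcup_n\bigl(S_n'\cap(B_n\setminus B_{n-1})\bigr)$ into a genuine selector satisfying $S\supseteq^*S_n'$ for every $n$, whence $\mu(S)\ge\sup_n(1-2^{-n})=1$. The essential point your approach misses is that the final selector must \emph{contain} each approximant modulo a finite set rather than be contained in their intersection; it is the almost-increasing arrangement, not repeated restriction, that converts the $(1-\varepsilon)$-approximations into an exact full-measure selector.
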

\begin{proof}
	Fix a partition $\langle A_n\rangle_n$  of $\omega$ into finite sets. For each $n$ fix a selector $S_n$ of $\langle A_n\rangle_n$ with $\mu(S_n)>1-2^{-n-2}$. We want to first make $\langle S_n \rangle_n$ almost increasing.
	
	Notice that for each $n$
	\[\sum_{k\geqslant n}\mu(\omega\setminus S_k) < \frac{1}{2^{n+1}}.\]
	Let $Z_n$ be a pseudo-union of $\langle \omega\setminus S_k:k>n\rangle$ with $\mu(Z_n)<2^{-n-1}$. For $n>0$ we can choose $Z_n \subseteq Z_{n-1}$. 
	
	Now let $S'_n=S_n\setminus Z_n$. Then $\mu(S'_n)>1-2^{-n}$ and $S'_n \subseteq^* S'_{n+1}$.
	
	Let $B_{-1}=\emptyset$ and
	\[B_n=\bigcup\{A_k: A_k\cap (S'_n\setminus S'_{n+1}) \neq \emptyset \text{ or }\min A_k<n\}\cup B_{n-1}.\]
	
	Notice that $S_n'\setminus B_{k-1}\subseteq S_k'$ for $k\geqslant n$ and $\bigcup_n B_n=\omega$. Now define 
	\[S =\bigcup_{n}(S'_n\setminus B_{n-1}) = \bigcup_n (S'_{n}\cap (B_{n}\setminus B_{n-1})).\]
	 Then for each $n$ we have $S\supseteq^* S'_n$ and so $\mu(S)=1$. Furthermore for any $k$ there is a unique $n$ such that $A_k\subseteq B_{n}\setminus B_{n-1}$ and so $|S\cap A_k|=|S'_n\cap A_k|\leqslant 1$.	
\end{proof}

It is not clear to us how to modify the proof above to show that all the P-measures which are Q$^+$ are exact Q$^+$-measures. 

To relate these facts to the terminology and results of \cite{Luis}, recall that a measure is a \emph{$\delta$-strong  Q-measure} if any partition has a selector of measure at least $\delta$ and a
\emph{fit Q$^+$-measure} if for every partition and $\delta<\frac{1}{2}$ there is a finite union of selectors of measure greater than $\delta$. Notice that any $\mu$ is a $\delta$-strong Q-measure for any $\delta<Q(\mu)$ and exact measures are those $\mu$ that are $Q(\mu)$-strong Q-measures.  We have thus shown the
following.

\begin{cor}\,
	\begin{itemize}
		\item If there
			is a Q$^+$-measure, then any two 'strengths' can be separated (i.e. for every $0< \varepsilon < \delta \leq 1$ there is a measure which is $\varepsilon$-strong but not $\delta$-strong),
		\item every Q$^+$-measure is
$\varepsilon$-strong for some $\varepsilon>0$,
		\item  fit Q-measures exists if and only if Q$^+$-measures exist.
	\end{itemize}
\end{cor}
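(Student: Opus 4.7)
The plan is to dispatch each of the three bullets separately, relying on the Q-value machinery, the corollary preceding the statement (every Q-value in $[0,1]$ is attained once Q$^+$-measures exist), and Proposition~\ref{thm:Q+almostQ}. No new construction is needed: the content is essentially an unpacking of the definitions of ``$\varepsilon$-strong'' and ``fit Q$^+$-measure'' against these existing tools.

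For the first bullet I would invoke the already-established corollary stating that, if a Q$^+$-measure exists, every value in $[0,1]$ arises as $Q(\mu)$ for some measure. Given $0 < \varepsilon < \delta \leq 1$, pick a measure $\mu$ with $Q(\mu) \in (\varepsilon, \delta)$ (a nonempty open interval). Property~(1) of the definition of the Q-value forces every partition to admit a selector of $\mu$-measure at least $\varepsilon$, so $\mu$ is $\varepsilon$-strong; property~(2) provides a critical partition, all of whose selectors have measure at most $Q(\mu)<\delta$, so $\mu$ cannot be $\delta$-strong.

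The second bullet is immediate from the definitions: if $\mu$ is Q$^+$ then $Q(\mu)>0$, and any $\varepsilon \in (0, Q(\mu))$ witnesses $\varepsilon$-strength by property~(1).

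The third bullet has two directions. A fit Q$^+$-measure is Q$^+$ by a pigeonhole argument: applying the fit property with $\delta = 1/3$, each partition admits a finite union $S_1 \cup \cdots \cup S_N$ of selectors with $\mu(S_1 \cup \cdots \cup S_N) > 1/3$, so some $S_i$ has $\mu(S_i) > 1/(3N) > 0$. For the converse I would start with a Q$^+$-measure $\mu$ and apply Proposition~\ref{thm:Q+almostQ}: restricting $\mu$ to a $\mu$-positive selector of a critical partition yields an almost Q-measure $\nu$ (i.e.\ $Q(\nu)=1$). Now, given any partition of $\omega$ into finite sets and any $\delta < 1/2 < 1$, property~(1) applied to $\nu$ delivers a single selector of $\nu$-measure at least $\delta$; a single selector is trivially a finite union of selectors, hence $\nu$ is fit Q$^+$. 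There is no real obstacle here — the only care needed is reading ``fit Q-measure'' and ``fit Q$^+$-measure'' as synonymous, which the terminology introduced just before the corollary strongly suggests is the intended convention.
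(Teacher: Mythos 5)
Your proposal is correct and is essentially the paper's intended derivation: the paper gives no explicit proof (it states the corollary with ``We have thus shown the following''), and your unpacking — the attained-Q-value corollary for the first bullet, the pivotal theorem/definition of $Q(\mu)$ for the second, and Proposition~\ref{thm:Q+almostQ} plus the pigeonhole observation for the third — is exactly the intended chain of reasoning. Your reading of ``fit Q-measure'' as the previously defined ``fit Q$^+$-measure'' is also the correct interpretation of the paper's terminology.
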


\section{Questions}

It seems that the research on orderings of measures on $\omega$ may be continued in many different directions. Below we enlist some of the problems.

\begin{prob} Is it consistent that there is a Q$^+$-measure (so also an almost Q-measure) while there are no Q-measures? 
\end{prob}

Notice that the negative answer to the above would yield the negative answer to \cite[Question 1]{Luis}: is it consistent that there is a Q$^+$-measure but there are no Q-points. The above problem is interesting also because, as we have shown in
Section \ref{sec:Q+}, the existence of all the weaker forms of Q-measures (Q$^+$, almost Q-measures, fit Q-measures) is equivalent.

In a similar fashion, we can ask about the relation between Q-measures and  RB-minimal that are not Q$^+$.

\begin{prob}
	Is it consistent that there is an RB-minimal measure while there are no Q-measures (or even Q$^+$)? Conversely is it consistent that RB-minimal measures do exists and all of them are Q?
\end{prob}

In this article we were interested mostly in the minimal elements of the considered orderings. However, as in the case of the ultrafilters, one can ask more general questions about the structure of those orderings (in the case of ultrafilters usually
the P-points were studied in this context).

\begin{prob} How different is the structure of the Rudin-Blass (or Rudin-Keisler) ordering of non-atomic (P-)measures and the structure of Rudin-Blass (Rudin-Keisler) ordering of (P-)ultrafilters? Can we embed
	$\mathcal{P}(\omega)/fin$ into the set of non-atomic measures with the Rudin-Keisler ordering?
\end{prob}

Theorem \ref{manyselective} suggests another natural question:

\begin{prob}\label{RBQ} Suppose that there are infinitely many RB-incomparable Q-points. Is there a non-atomic Q-measure?
\end{prob}

If the answer is positive, then we would know that whenever there are infinitely many RB-incomparable Q-points, then there are $2^\mathfrak{c}$ of them, see Proposition \ref{2tocontinuum}. As we know (see \cite{Heike} and the proof of Proposition
\ref{heike}) there is a model with exactly three coherence classes of Q-points. So, Problem \ref{RBQ} is connected to the following question, which seems to be interesting on its own. 

\begin{prob}
Is there a model with exactly $\omega$ many coherence classes of Q-points?
\end{prob}

Now, let us switch to the terminology of Section \ref{sec:Mokobodzki}. We may define the poset
\[ \mathbb{P} = (\{ c\colon c \preceq e \}, \preceq^*). \]
Every infinite subset $A$ of $\omega$ may be identified with a sequence $c^A$ in $\ell_\infty^*$: just let $c^A_n = e_{k_n}$, where $(k_n)$ is an increasing enumeration of elements
of $A$. Then, $c^A \preceq^* c^B$ if and only if $A \subseteq^* B$ and so $\mathcal{P}(\omega)/fin$ naturally embeds in $\mathbb{P}$. In fact we can call $\mathbb{P}$ a 'convex version' of $\mathcal{P}(\omega)/fin$ (although mind that $\mathbb{P}$ does not
have the Boolean structure). 
From this perspective, the proof of Theorem \ref{thm:RBMinNotQ} comes down to showing that the above forcing generically adds a Rudin-Keisler minimal measure which is shift invariant and in Proposition \ref{pclosed} we prove that $\mathbb{P}$ is at least $\mathfrak{p}$-closed (and the
forcing $\mathbb{M}$ from the proof is a kind of a 'convex' version of the classical Mathias forcing). Also, the filters in $\mathbb{P}$ induce naturally partial functionals in $\ell_\infty^*$.

This remark suggests a lot of questions about the structure of $\mathbb{P}$, for example about the 'convex' versions of some classical cardinal coefficients.

\begin{prob} What is the distributivity of $\mathbb{P}$ (convex $\mathfrak{h}$)? What is the minimal size of a maximal antichain in $\mathbb{P}$
	(convex $\mathfrak{a}$)?
\end{prob}

Notice that since a forcing notion collapses $\mathfrak{c}$ to its distributivity, the answer to the last question could give us more information what do we need to assume to get an existence of a RK-minimal measure which is shift invariant.

\section{Acknowledgements}

We would like to thank Luis S\'aenz, Szymon Smolarek and Andr\'es Uribe-Zapata for fruitful discussions concerning measures on $\omega$.

\bibliographystyle{alpha}
\bibliography{bib-ppoint}

\end{document}